\title{Characterizations of lattice surfaces}
\author{John Smillie
}
\address{Cornell University, Ithaca, NY {\tt smillie@math.cornell.edu}}
\author{Barak Weiss}
\address{Ben Gurion University, Be'er Sheva, Israel 84105
{\tt barakw@math.bgu.ac.il}}
\font\sb = cmbx8 scaled \magstep0
\font\sn = cmssi8 scaled \magstep0
\long\def\combarak#1{\ifdraft{\sb #1 }\else\ignorespaces\fi}
\newcommand\hol{\mathrm{hol}}
\newif\ifdraft\drafttrue
\newcommand\name[1]{\label{#1}{\ifdraft{\sn [#1]}\else\ignorespaces\fi}}
\newcommand\eq[2]{{\ifdraft{\ \tt [#1]}\else\ignorespaces\fi}\begin{equation}\label{#1}{#2}\end{equation}}
\newcommand {\equ}[1]{\eqref{#1}}
\newcommand{\HH}{{\mathcal{H}}}
\newcommand{\HHH}{{\mathbb{H}}}
\newcommand{\Q}{{\mathbb {Q}}}
\newcommand{\R}{{\mathbb{R}}}
\newcommand{\TT}{{\mathcal{T}}}
\newcommand{\ii}{{\mathbf{i}}}
\newcommand{\Z}{{\mathbb{Z}}}
\newcommand{\SL}{\operatorname{SL}}
\newcommand{\PSL}{\operatorname{PSL}}
\newcommand{\SO}{\operatorname{SO}}
\newcommand{\PSO}{\operatorname{PSO}}
\newcommand{\diag}{{\rm diag}}
\newcommand {\ignore}[1]  {}
\newcommand{\LL}{{\mathcal L}}
\newcommand{\FF}{{\mathcal{F}}}
\newcommand{\til}{\widetilde}
\newcommand{\sm}{\smallsetminus}
\newcommand{\vre}{\varepsilon}
\newcommand{\NST}{{\mathrm{NST}}}
\newcommand{\NSVT}{{\mathrm{NSVT}}}
\newcommand{\LCM}{{\mathrm{LCM}}}
\newcommand{\Aff}{{\mathrm{Aff}}}
\newtheorem{thm}{Theorem}[section]
\newtheorem{lem}[thm]{Lemma}
\newtheorem{prop}[thm]{Proposition}
\newtheorem{cor}[thm]{Corollary}
\newtheorem{remark}[thm]{Remark}
\newtheorem{example}[thm]{Example}
\newtheorem{question}{Question}
\begin{document}
\maketitle

\begin{abstract}
We answer a question of Vorobets by showing that the lattice property
for flat surfaces is equivalent to the existence of a positive lower
bound for the areas of affine triangles.
We show that the set of affine equivalence classes of lattice surfaces
with a fixed positive lower bound for the areas of triangles is finite
and we obtain explicit bounds on its cardinality. We deduce 
several other characterizations of the lattice property. 

\end{abstract}

\section{Introduction}

Our objects of study are translation and half-translation surfaces and
their affine automorphism groups. These structures arise in the study 
of rational polygonal billiards. They also arise in Thurston's classification of
surface diffeomorphisms in connection with measured foliations.
 Isomorphic structures arise in
complex analysis where they are called respectively abelian differentials
and quadratic differentials. We will use the term flat surface for both
translation and half translation surfaces when it is not important to distinguish
between the two types. For more details on flat surfaces see
\cite{Vorobets, MT, zorich survey}. 

Let $\Aff(M)$ denote the affine automorphism group of a flat surface $M$. 
For a typical flat surface this group is trivial; however surfaces
with non-trivial automorphism groups are quite interesting. 
 Taking the differential of the automorphism yields a
homomorphism $D: \Aff(M) \to G$ with finite kernel, 
where $G$ is either $\SL(2, \R)$ or $\PSL(2,\R)$ depending on whether
$M$ is a translation or half-translation surface. The image 
$\Gamma_M$ of this homomorphism is called the {\em Veech group} of $M$.  
We say that $M$ is a {\em lattice surface} if $\Gamma_M$ is a lattice,
i.e. has finite
covolume in $G$.
In a celebrated
paper \cite{Veech - alternative}, Veech
constructed a family of lattice surfaces, and 
showed that lattice surfaces have striking
dynamical properties, in particular they satisfy the `Veech dichotomy'
which will be discussed below.
Other examples have been constructed by several authors 
and classifications
of lattice surfaces are known in special cases (see \cite{GJ, KS, Puchta,
Calta, McMullen}),
but an overall classification does not yet exist.

Vorobets \cite{Vorobets} found a connection between the lattice
property and the collection of areas of triangles in the
surface. A flat surface is equipped with a finite set of distinguished
points $\Sigma = \Sigma_M$ which contains all cone points (those with
cone angle not equal to $2\pi$) and may contain other points. We
always assume $\Sigma \neq \varnothing$, and that elements of
$\Aff(M)$ preserve $\Sigma$. A {\em triangle in $M$} is the image
of an affine map from a triangle in the plane to $M$, which takes the
vertices of the triangle to points in $\Sigma,$ is injective on
the interior of the triangle, and such that interior points do not map
to $\Sigma$. In particular the vertices of the triangle need not be
distinct. The collection of areas of triangles will be denoted by
$\TT(M)$. Vorobets proved that $M$ is a lattice surface if and only if
$\TT(M)$ is finite. Say that $M$ {\em has no small triangles} if $\inf
\, \TT(M)>0$. Vorobets also showed that if $M$ has no small triangles
then it satisfies the Veech dichotomy, and raised the question of
whether the no small triangles property is equivalent to the lattice
property.

\begin{thm}
\name{thm: characterization}
A flat surface has the lattice property if and only if it has no small
triangles. 

\end{thm}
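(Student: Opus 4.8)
The plan is to prove the two implications separately; the reverse one is the substantial one.

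\emph{Lattice $\Rightarrow$ no small triangles.} This is immediate from Vorobets' theorem quoted above: if $\Gamma_{M}$ is a lattice then $\TT(M)$ is finite, and a finite set of positive reals has positive infimum, so $\inf\TT(M)>0$.

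\emph{No small triangles $\Rightarrow$ lattice.} Suppose $\inf\TT(M)=\delta>0$. I would obtain the lattice property by showing that the $G$-orbit of $M$ is closed in its stratum, together with the fact --- which may be cited from earlier work on $\SL(2,\R)$-flows on moduli space, or established in parallel --- that a flat surface is a lattice surface precisely when its $G$-orbit is closed. For the closedness, note first that $G$ acts by maps that rescale all areas by a common factor (here $1$), so triangle areas are constant along orbits and $\inf\TT(gM)=\delta$ for every $g\in G$; moreover $N\mapsto\inf\TT(N)$ is upper semicontinuous, because an immersed triangle of area $\alpha$ in $N_{0}$ --- cut out by the stable open conditions that it be injective on its interior with interior disjoint from $\Sigma$, after shrinking it slightly if need be --- persists on every nearby surface with nearly the same area. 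Hence $\mathcal Z_{\delta}\df\{N:\inf\TT(N)\ge\delta\}$ is a closed, $G$-invariant subset of moduli space containing $\overline{G\cdot M}$.

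The crux, and the step I expect to be hardest, is the finiteness assertion of the abstract: for fixed $\delta>0$ the set $\mathcal Z_{\delta}$ meets only finitely many affine equivalence classes. To attack this I would analyze a canonical triangulation of each $N\in\mathcal Z_{\delta}$ by saddle connections with vertex set $\Sigma$ --- a Delaunay triangulation --- noting that the unit-area normalization together with the lower bound $\delta$ on the areas of its triangles forces the number of triangles, and hence the number of combinatorial types of triangulation, to be bounded; the uniform lower bound $\delta$ on the areas of \emph{all} triangles of $N$ (not merely those in the chosen triangulation) is then a strong further constraint, which I expect cuts the remaining period-coordinate families down to finitely many $G$-orbits and yields an explicit cardinality bound --- but making this precise is the main obstacle. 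Granting the finiteness, $\mathcal Z_{\delta}$ is a closed $G$-invariant set that is a union of finitely many affine classes, hence --- each such class, in the unit-area locus, being a single $G$-orbit, since Veech groups are discrete --- a union of finitely many $3$-dimensional $G$-orbits; a topological argument (Baire category, using that all of these orbits have the same dimension) then shows such a set must be a finite \emph{disjoint} union of \emph{closed} orbits. In particular $G\cdot M$ is closed, which completes the proof.
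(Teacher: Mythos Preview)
Your forward implication is fine. For the reverse, your route diverges substantially from the paper's and has a real gap at the end.

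The paper does not pass through the closed-orbit characterization to prove (vi) $\Rightarrow$ (i). It bounds $\mathrm{area}(\HHH/\Gamma_M)$ directly: the \emph{spine} $\Pi\subset\HHH$ is a $\Gamma_M$-invariant tree whose edges record surfaces (in the affine class of $M$) with shortest saddle connections in exactly two directions, and a retraction $\HHH\to\Pi$ gives $\mathrm{area}(\HHH/\Gamma_M)\le 2\pi N$ where $N$ is the number of edges of $\Pi/\Gamma_M$ (Proposition~\ref{prop: area est}). Each edge determines a pair of parabolic cylinder decompositions; the Thurston--Veech parametrization by two permutations and two Dehn-twist vectors (Proposition~\ref{prop: parameters determine surface}), together with the NST/NSVT bounds on the number of rectangles and on the twist data, shows $N<\infty$, hence $\Gamma_M$ is a lattice. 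Theorem~\ref{thm: NST finite} is obtained from the very same machinery (a slightly different count), so invoking it is not a shortcut; and your Delaunay suggestion would still need a rigidity statement analogous to Proposition~\ref{prop: parameters determine surface}, which you do not supply.

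The genuine gap is your last step: Baire category does \emph{not} show that a closed $G$-invariant set which is a finite union of $3$-dimensional $G$-orbits consists of closed orbits. Baire (using that $G$ is $\sigma$-compact) gives only that each orbit is \emph{locally closed}, so the closure relation among the finitely many orbits is a strict partial order and minimal orbits are closed; but nothing you wrote rules out a non-minimal $O_1$ with a closed orbit $O_2\subset\overline{O_1}\sm O_1$. Equal dimension does not help: for a flow on a surface with a limit cycle, the cycle and a single spiraling trajectory are two $1$-dimensional orbits whose union is closed, yet the spiraling one is not. (Your upper-semicontinuity sketch also needs repair: you cannot ``shrink'' a triangle whose vertices must lie in $\Sigma$; argue instead that the holonomies of its sides vary continuously in period coordinates and that injectivity on the interior is an open condition.) So even granting Theorem~\ref{thm: NST finite} and the equivalence (i)$\Leftrightarrow$(x), you have not shown $G\cdot M$ is closed.
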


Let us consider only surfaces $M$ which have total area 1. For
$\alpha>0$, let 
$$\NST(\alpha) = \left\{M: \inf \, \TT(M) \geq \alpha \right \}.$$
Say that $M$ and $M'$ are {\em affinely equivalent} if there
is an affine homeomorphism between them; note that for surfaces of
area 1, $\mathcal{T}(M)$
depends only on the affine equivalence class of $M$. 

\begin{thm}
\name{thm: NST finite}
For any $\alpha>0$, 
$\NST(\alpha)$ contains a finite number of affine equivalence classes.
\end{thm}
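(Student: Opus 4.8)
The plan is to combine a combinatorial bound coming from triangulating the surface with Theorem~\ref{thm: characterization} and a compactness argument.

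First I would fix, for each $M$, a geodesic triangulation whose vertex set is exactly $\Sigma$. Every face of such a triangulation is a triangle of $M$ in the sense of the definition, so if $M\in\NST(\alpha)$ then each face has area at least $\alpha$. Since $M$ has area $1$, the triangulation has at most $1/\alpha$ faces and hence at most $3/(2\alpha)$ edges, and plugging $V=|\Sigma|$, $E$ and $F\le 1/\alpha$ into $V-E+F=2-2g$ bounds both the genus and $|\Sigma|$ in terms of $\alpha$. So only finitely many topological types -- equivalently finitely many strata -- contain points of $\NST(\alpha)$, and within each, only finitely many combinatorial types $\tau$ of such triangulations arise. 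It is therefore enough to fix one type $\tau$, parametrize the surfaces of type $\tau$ by their edge holonomy vectors in $(\R^2)^E$ (on which $\SL(2,\R)$ acts linearly, while the finite group of combinatorial automorphisms of $\tau$ and orientation reversal act with finite ambiguity), and show that $\NST(\alpha)$ meets only finitely many of the resulting orbits.

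Two structural observations drive the argument. Since elements of $\SL(2,\R)$ act on $M$ by area-preserving homeomorphisms carrying triangles to triangles, $\TT(gM)=\TT(M)$ for every $g\in\SL(2,\R)$; thus $\NST(\alpha)$ is $\SL(2,\R)$-invariant, and it is closed in each stratum because $M\mapsto\inf\TT(M)$ is upper semicontinuous there (a small triangle of a limit surface deforms to a small triangle of nearby surfaces). By Theorem~\ref{thm: characterization} every $M\in\NST(\alpha)$ is a lattice surface, so its $\SL(2,\R)$-orbit is closed; hence $\NST(\alpha)\cap\{\text{area }1\}$ is a closed, $\SL(2,\R)$-invariant set that is a disjoint union of closed $\SL(2,\R)$-orbits, and these orbits are exactly the affine equivalence classes we wish to count. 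Next I would put each orbit in bounded geometry: using the no-small-triangles hypothesis one shows that any such $M$ has an $\SL(2,\R)$-translate whose Delaunay edges all have length in a fixed interval $[c(\alpha),C(\alpha)]$ -- the point being that $\NST(\alpha)$ forbids the surface from being thin in two independent directions at once (the thin overlap region would carry a triangle of area going to $0$), so the single possible direction of thinness can be rotated away. Such normalized representatives lie in a fixed compact subset $K$ of the (finite union of) strata in question, and $K$ meets every orbit of $\NST(\alpha)\cap\{\text{area }1\}$; so it remains to show that $K$ meets only finitely many orbits.

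The crux is this last step, a local rigidity statement near lattice surfaces. Suppose $K$ met infinitely many orbits; choose pairwise affinely inequivalent $M_n\in K$ with $M_n\to M_\infty\in K$. Then $M_\infty\in\NST(\alpha)$, so $M_\infty$ is a lattice surface with closed orbit $\mathcal O_\infty$, and no $M_n$ lies on $\mathcal O_\infty$. I would derive a contradiction from the claim that every flat surface sufficiently close to $M_\infty$ in period coordinates, other than the points of $\mathcal O_\infty$, contains a triangle of area less than $\alpha$: writing the relevant triangle areas as explicit functions of the period coordinates near $M_\infty$, one must check that deforming $M_\infty$ in any direction transverse to $\mathcal O_\infty$ -- including along any $\SL(2,\R)$-invariant submanifold through $M_\infty$ -- forces some triangle area below $\alpha$. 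This is the main obstacle: it amounts to a quantitative, localized version of Theorem~\ref{thm: characterization}, and one must control its interaction with any larger invariant locus through $M_\infty$ (such as an eigenform locus), on which there are nearby lattice surfaces but, by the claim, none in $\NST(\alpha)$. Granting the claim, $M_n\to M_\infty$ with $M_n\notin\mathcal O_\infty$ is impossible, and the theorem follows.
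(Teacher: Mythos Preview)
Your argument has a genuine gap at precisely the step you flag as ``the main obstacle.'' The local rigidity claim --- that every surface near $M_\infty$ but off its $G$-orbit has a triangle of area below $\alpha$ --- is not proved, and establishing it is essentially equivalent to the theorem. The sequence $(M_n)$ you want to rule out consists of lattice surfaces in $\NST(\alpha)$ off the orbit of $M_\infty$; your claim asserts exactly that no such surfaces exist near $M_\infty$, which is the desired conclusion. Writing triangle areas as functions of period coordinates does not help: the finitely many triangles visible on $M_\infty$ vary continuously and, if $\alpha(M_\infty)>\alpha$ strictly, stay above $\alpha$ on a full neighborhood. The small triangles that must appear on a generic perturbation (since a non-lattice surface has $\inf\TT=0$ by Theorem~\ref{thm: characterization}) are \emph{new} triangles, not deformations of existing ones, and you give no mechanism to locate them or to show they already appear on nearby \emph{lattice} surfaces such as the $M_n$. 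Compactness of $\NST(\alpha)\cap K$ alone cannot close the gap either: a compact $G$-invariant set can be a disjoint union of infinitely many closed $G$-orbits. There is also a dependency issue: in the paper Theorem~\ref{thm: characterization} is obtained from the same machinery that yields Theorem~\ref{thm: NST finite}, so it is not available as a black box at this point.

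The paper's proof is entirely different and uses no compactness or accumulation argument. Via Vorobets, any saddle connection direction on $M\in\NST(\alpha)$ is parabolic, so a pair of nonparallel short saddle connections yields two transverse cylinder decompositions and hence a decomposition of $M$ into rectangles. The $\NSVT$ bound (Theorem~\ref{thm: nst vs nsvt}) then controls the number $\ell$ of rectangles (Corollary~\ref{cor: stand form equiv class}) and the Dehn twist vectors (Propositions~\ref{prop:diagonal count} and \ref{prop: twist bounds beta}); the Thurston--Veech argument (Proposition~\ref{prop: parameters determine surface}) shows that the pair of gluing permutations in $S_\ell$ together with the Dehn twist vectors determine the normalized surface uniquely. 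Counting these finitely many combinatorial possibilities gives the explicit bound of Proposition~\ref{prop: explicit}, and the theorem follows.
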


See  Proposition
\ref{prop: explicit} for an explicit bound on this number, and see
Proposition \ref{prop: coarea} for a bound on the sum, over all $M \in
\NST(\alpha),$ of the co-areas of $\Gamma_M$.

Besides Theorem \ref{thm: characterization}, our results yield several
different characterizations of lattice 
surfaces, which we collect in Theorem \ref{thm: TFAE} below. To state
them we introduce some terminology.
A {\em saddle
connection} on $M$ is a straight segment on $M$ connecting points of
$\Sigma$ (which need not be distinct), with no points of $\Sigma$ in
its interior. Let $\LL= \LL_M$ denote the set of all
saddle connections on $M$, and let $\LL(\theta) = \LL_M(\theta)$ denote the
set of saddle connections in direction $\theta$.
We say that $\theta$
is a {\em periodic direction} if
each connected 
component of $M \sm \LL(\theta)$ is a cylinder with a waist curve in
direction $\theta$.
Veech \cite{Veech - alternative} showed that a lattice surface
satisfies the following dichotomy:
\begin{itemize}
\item[I.] If $\LL(\theta) \neq \varnothing$ then $\theta$ is
a periodic direction. 
\item[II.] If $\LL(\theta) = \varnothing$ then $\FF_\theta$ is 
uniquely ergodic. 
\end{itemize}
This dichotomy does not characterize the lattice property (see
\cite{SW -- Veech}), but a modification of property I introduced by
Vorobets does. 
Namely, say
that $M$ is {\em uniformly completely periodic} if there is $s>0$ such
that 
each $\theta$ for which $\LL(\theta) \neq \varnothing$ is a periodic
direction, and the ratio of 
lengths of any two segments in $\LL(\theta)$ does not exceed
$s$. We will show that this property characterizes the
lattice property. Moreover, for a lattice surface in $\NST(\alpha)$ we
will provide an effective estimate for $s$ in terms of $\alpha$, see
Proposition \ref{prop: s bound}. 

A periodic direction
$\theta$ is called {\em parabolic} if the moduli of
all the cylinders are commensurable, and $M$ is called {\em uniformly
completely 
parabolic} if it is uniformly completely periodic, with all periodic
directions parabolic. 

Another characterization involves the set of holonomy vectors of
saddle connections. Associated with $\delta \in \LL$ is a vector
hol($\delta$) in the plane 
of the same length and direction as $\delta$. We set 
$$\mathrm{hol}(M) = \{ \mathrm{hol}(\delta) : \delta \in \LL \}.
$$
This is a discrete $\Gamma_M$-invariant subset of the plane which has attracted
considerable attention. 
For $\beta>0$, we let 
$$\NSVT(\beta) = \left\{ M : \inf \{ |v_1 \wedge v_2| : v_i \in \mathrm{hol}(M), v_1 \wedge v_2
\neq 0 \} \geq \beta \right \},
$$
and say that $M$ has {\em no small virtual triangles} if it belongs
to some $\NSVT(\beta)$. 
\ignore{The {\em virtual triangle
spectrum} of $M$ is 
$$
\mathcal{VT} (M) =\{ |v_1 \wedge v_2| : v_i \in \mathrm{hol}(M) \}.
$$
We let 
$$
\NSVT(\alpha) = \{M : \inf \, \mathcal{VT}(M) \geq \alpha \}
$$
(NSVT stands for `no small virtual triangles'). }
We will show that having no
small virtual 
triangles is also equivalent to the lattice property. 

Two other characterizations involve the dynamics of the $G$-action on
the space of flat surfaces. 
Associated with $M$ is the topological data consisting of the
underlying surface, the number of points in $\Sigma$ and the
associated cone angles, and whether or not the corresponding
foliations are orientable. The set of all $M$ sharing this data is
a noncompact orbifold called a {\em stratum}. The restriction of the
action of $G$ to the subgroup 
$\{g_t\}$, where 
$$
g_t = \left(\begin{array}{cc}
e^{t/2} & 0 \\ 0 & e^{-t/2} \end{array} \right)
$$ 
is called the {\em geodesic flow}. 


\begin{thm}
\name{thm: TFAE}
The following are equivalent for a flat surface $M$:
\begin{itemize}
\item[(i)]
$M$ is a lattice surface.
\item[(ii)]
$M$ is uniformly completely periodic. 
\item[(iii)]
$M$ is uniformly completely parabolic.
\item[(iv)]
$\left|\mathcal{T}(M) \right| < \infty$.
\item[(v)]
The set of triangles for $M$ consists of finitely many $\Aff(M)$-orbits.
\item[(vi)]
$M$ has no small triangles. 
\item[(vii)]
$\{u \wedge v : u, v \in \hol(M)\}$ is a discrete set of numbers. 
\item[(viii)]
For any $T>0$, the set 
$$\{(\xi,\eta) \in \LL_M \times \LL_M :
|\hol(\xi) \wedge
\hol(\eta)| < T\}$$
contains finitely many $\Aff(M)$-orbits. 
\item[(ix)]
$M$ has no small virtual triangles. 
\item[(x)]
The $G$-orbit of $M$ is closed.
\item[(xi)]
There is a compact subset $K$ of the stratum containing $M$ such
that for any $g \in G$, the geodesic orbit of $gM$ intersects $K$. 
\end{itemize}
\end{thm}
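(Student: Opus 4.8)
The plan is to prove the equivalences by establishing a cycle of implications, using the two main theorems (Theorems \ref{thm: characterization} and \ref{thm: NST finite}) as black boxes together with Veech's and Vorobets' earlier work. I would organize the argument around the ``hub'' equivalences (i) $\Leftrightarrow$ (vi), which is precisely Theorem \ref{thm: characterization}, and (i) $\Leftrightarrow$ (x), which is a known fact about the $G$-action on strata (a $G$-orbit is closed iff the stabilizer is a lattice, by general homogeneous dynamics / the work of Smillie--Weiss on periodic orbits). From these two anchors the remaining conditions split into two clusters: the ``combinatorial/geometric'' cluster (ii), (iii), (iv), (v), (vii), (viii), (ix) and the ``dynamical'' cluster (x), (xi).

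First I would handle the easy internal implications within the geometric cluster. Conditions (iv) $\Rightarrow$ (v) and (vii) $\Rightarrow$ (ix) $\Leftrightarrow$ (viii) are essentially formal: finiteness of the area spectrum $\TT(M)$ forces finitely many $\Aff(M)$-orbits of triangles because $\Aff(M)$ acts on triangles preserving area and the set of triangles of a given area is finite (discreteness of holonomy); and discreteness of $\{u\wedge v\}$ together with the $\Gamma_M$-invariance of $\hol(M)$ gives both a positive lower bound (away from $0$) and the orbit-finiteness statement. Similarly (ix) $\Rightarrow$ (vi) is immediate since every genuine triangle has its edge holonomies in $\hol(M)$, so $\TT(M) \subseteq \{\,\tfrac12|v_1\wedge v_2|\,\}$; and conversely a virtual triangle with small wedge can be ``filled in'' by genuine triangles of controlled area, giving (vi) $\Rightarrow$ (ix) — this filling-in argument, comparing virtual and genuine triangles, is the one place where I expect to need a real geometric lemma (triangulating a parallelogram spanned by two saddle connections into genuinely embedded triangles whose total area is bounded), and it is likely proved earlier in the paper in the course of proving Theorem \ref{thm: characterization}. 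Then (vi) $\Rightarrow$ (i) is Theorem \ref{thm: characterization}, (i) $\Rightarrow$ (iv) is Vorobets' theorem, and (i) $\Rightarrow$ (ii), (iii) follows from the lattice property: a lattice Veech group has finitely many cusps, each cusp is parabolic (eigenvalue-$1$ matrices are the only parabolics, and a lattice in $\SL(2,\R)$ has only parabolic and hyperbolic boundary behavior), and the cylinder decomposition in a periodic direction is permuted by the parabolic stabilizer, forcing commensurable moduli and a uniform bound $s$ on length ratios — here I would quote Proposition \ref{prop: s bound} for the effective version. Finally (iii) $\Rightarrow$ (ii) is trivial, and (ii) $\Rightarrow$ (vi) is Vorobets' implication (uniform complete periodicity bounds triangle areas from below, since a small triangle would produce a direction with wildly different saddle connection lengths).

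For the dynamical cluster: (i) $\Rightarrow$ (x) is the standard fact that the orbit of a lattice surface is closed (the orbit is $G/\Gamma_M$ embedded in the stratum); (x) $\Rightarrow$ (i) uses that a closed $G$-orbit supports a finite $G$-invariant measure (the orbit is homogeneous and of finite volume by a recurrence/non-divergence argument, so the stabilizer is a lattice). The implication (i) $\Rightarrow$ (xi) follows because $G/\Gamma_M$ is a finite-volume homogeneous space, hence every forward geodesic orbit, being nondivergent (by Dani--Margulis-type nondivergence on the stratum, or directly from the Veech dichotomy), returns to a fixed compact set $K$; one gets a \emph{single} $K$ working for all $gM$ because $GM$ is precisely the closed orbit $K$ must meet, and one can take $K$ to be a large compact piece of that orbit. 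Conversely (xi) $\Rightarrow$ (x): if every geodesic orbit of every point in $GM$ meets a fixed compact $K$, then $\overline{GM}$ is compact... — more carefully, $K$-recurrence of all geodesics in $\overline{GM}$ forces $\overline{GM}$ to be a minimal-type set on which $G$ acts with no escape of mass, and by the classification of $G$-orbit closures in strata available at the time (or by a soft argument using that the $g_t$-recurrence plus the $K$-compactness pins the orbit) one concludes $GM$ itself is closed.

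The main obstacle I anticipate is not any single implication but rather the (vi) $\Leftrightarrow$ (ix) equivalence at the level of \emph{effective} constants — translating a lower bound $\alpha$ on genuine triangle areas into a lower bound $\beta$ on virtual triangle areas and back — since virtual triangles (wedges of arbitrary pairs of holonomy vectors) can be much ``thinner'' than any embedded triangle, and controlling the subdivision of a thin parallelogram into embedded triangles with total area comparable to $\beta$ requires the geometric input about saddle connections crossing cylinders. I would isolate this as a lemma and prove it by the cylinder-decomposition argument: a pair of saddle connections with small wedge lies essentially inside a thin sub-parallelogram of a cylinder, which can be cut along saddle connections into embedded triangles. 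Everything else is bookkeeping, modulo citing Theorems \ref{thm: characterization} and \ref{thm: NST finite} and the homogeneous-dynamics facts about closed $G$-orbits.
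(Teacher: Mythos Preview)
Your overall architecture is right, and most of the bookkeeping matches the paper's: Vorobets supplies (i) $\Leftrightarrow$ (v) $\Rightarrow$ (iv) $\Rightarrow$ (vi) $\Leftrightarrow$ (ii) $\Leftrightarrow$ (iii), Theorem \ref{thm: characterization} closes (vi) $\Rightarrow$ (i), Theorem \ref{thm: nst vs nsvt} gives (vi) $\Leftrightarrow$ (ix), and (i) $\Leftrightarrow$ (x) is quoted as a known result. There are, however, two genuine gaps.

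First, your treatment of (xi) is the weak point. You try to prove (xi) $\Rightarrow$ (x) by invoking either a classification of $G$-orbit closures in strata or an unspecified ``soft argument''. Neither is available: the orbit-closure classification postdates this paper by years, and the soft argument you gesture at would have to rule out that $\overline{GM}$ is strictly larger than $GM$ while still having every geodesic meet $K$, which is not automatic. The paper avoids this entirely by proving (xi) $\Rightarrow$ (ix) directly, with an elementary argument you are missing: choose $\eta > 0$ so that every saddle connection on every surface in $K$ has length at least $\eta$; given $v_1, v_2 \in \hol(M)$ with $v_1 \wedge v_2 \neq 0$, pick $g \in G$ so that $gv_1$ is horizontal, $gv_2$ is vertical, both of common length $c$, so that $|v_1 \wedge v_2| = c^2$; by (xi) some $g_t gM \in K$, and whichever sign $t$ has, one of the two saddle connections has length $e^{-|t|/2}c$ in $g_t gM$, hence $c \geq \eta$ and $|v_1 \wedge v_2| \geq \eta^2$. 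This uses nothing beyond the definition of $K$ and closes the cycle through the conditions already known to be equivalent.

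Second, you call (ix) $\Leftrightarrow$ (viii) ``essentially formal''; the direction (ix) $\Rightarrow$ (viii) is not. A lower bound on nonzero wedge values does not by itself bound the number of $\Aff(M)$-orbits of pairs $(\xi,\eta)$ with $|\hol(\xi)\wedge\hol(\eta)| < T$: there can be infinitely many such pairs, and one must show $\Aff(M)$ acts with finitely many orbits on them. The paper proves this as Corollary \ref{cor: viii}, and it requires the Thurston--Veech machinery of \S\ref{section: pairs of parabolics}: the pair $(\xi,\eta)$ determines two parabolic directions, the bounds on Dehn twist vectors and on the number of rectangles give finitely many possible combinatorial types, and Proposition \ref{prop: parameters determine surface} shows that two pairs with the same combinatorics differ by an element of $\Gamma_M$. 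A smaller slip in the same vein: your justification of (iv) $\Rightarrow$ (v), that ``the set of triangles of a given area is finite by discreteness of holonomy'', is false --- already on the square torus with one marked point there are infinitely many embedded triangles of area $1/2$ (take vertices $0$, $(1,0)$, $(n,1)$). The implication still holds, but only via the detour (iv) $\Rightarrow$ (vi) $\Rightarrow$ (i) $\Rightarrow$ (v).
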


Many of these implications are due to Vorobets \cite[\S
6]{Vorobets}. The main new implication is (vi) $\implies$ (i). Our
first 
proof of this implication is sketched in \cite{toronto}. 

The proofs of Theorems \ref{thm: characterization} and
\ref{thm: NST finite} are similar. Vorobets showed that if $M$
has no small triangles any saddle connection direction is parabolic,
so associated with two non-parallel saddle connections are
a pair of cylinder decompositions of $M$, each with cylinders of
commensurable moduli. 
Moreover a bound on triangle areas leads to an upper bound on certain 
combinatorial data associated with two cylinder
decompositions. Arguments of Thurston and Veech show that such 
combinatorial data determine a flat surface up to affine
equivalence. This immediately 
yields Theorem \ref{thm: NST finite}. 

To derive Theorem \ref{thm:
characterization} we introduce the {\em spine} $\Pi$ of a translation
surface $M$; 
this is a $\Gamma_M$-invariant tree in $\HHH$, whose edges are
labeled by two saddle connections on $M$, and correspond to the surfaces
affinely equivalent to $M$ for which these saddle
connections are simultaneously shortest. There is a retraction $\rho:
\HHH \to \Pi$ taking a surface in which all shortest saddle
connections are parallel, to an affinely equivalent surface which has
shortest saddle connections in two or more directions. For each edge $e$ of
$\Pi$, $\rho^{-1}(e)$ is a finite-area domain in $\HHH$, and $e$
is associated with a pair of cylinder decompositions. A bound on
triangle areas bounds the combinatorial data and thus bounds the
number of  edges in $\Pi/\Gamma_M$, giving a
bound on the area of $\HHH/\Gamma_M$. 

Theorems \ref{thm: characterization} and \ref{thm: NST finite} suggest a
natural ordering on the set of lattice surfaces. For a lattice
surface $M$, let $\alpha(M)$ denote the largest $\alpha$ for which $M
\in \NST(\alpha)$. 
Then all (affine equivalence classes of) lattice surfaces may be
written as 
\eq{eq: alphaM}{
M_1, M_2, \ldots, \ \ \mathrm{with \ \ }\alpha(M_1) \geq \alpha(M_2)
\geq \cdots,
}
and a similar ordering exists with the NSVT condition instead of the NST
condition. 

It would be desirable to have an algorithm which,
given $\alpha >0$, lists all surfaces in $\NST(\alpha)$ or
$\NSVT(\alpha)$. 
Our analysis yields an explicit finite set of
surfaces, presented in terms of Thurston-Veech combinatorial
data, which contains all the above lattice surfaces, and implicitly, an
algorithm for distinguishing the lattice from the non-lattice
surfaces. This is our motivation for providing effective proofs 
and explicit estimates for the cardinality of certain finite sets. 
We suspect our explicit estimates are far from optimal and
believe additional theoretical work would be required in order to
create an algorithm which is practically feasible.

In the interest of presenting the
simplest formulae, our estimates may be presented in terms of either
triangle or virtual triangle areas. We do not present 
estimates involving the genus of $M$ or the stratum
containing $M$, both because these are harder to obtain, and because
the same effective search will produce the lattice surfaces in all
genera. 

 The relation between
the no small triangles and no small virtual triangles properties is
described in the following:
\begin{thm}
\name{thm: nst vs nsvt}
$\NSVT(\alpha/2) \subset \NST(\alpha) \subset \NSVT(2\alpha
e^{-1/(2\alpha e)})$. 

\end{thm}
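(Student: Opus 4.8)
The plan is to relate the area of an honest triangle in $M$ to the wedge product $|\hol(\xi) \wedge \hol(\eta)|$ of the holonomy vectors of its sides. First I would fix a triangle $\tau$ in $M$ with vertices in $\Sigma$, injective on its interior, whose interior misses $\Sigma$. Two of its sides are then segments in $M$ with endpoints in $\Sigma$; although these sides need not themselves be saddle connections (they may pass through points of $\Sigma$), each side can be subdivided into finitely many saddle connections whose holonomy vectors sum to the holonomy of the side. Writing $\hol(\xi) = a$ and $\hol(\eta) = b$ for the holonomies of two sides, the area of $\tau$ is exactly $\frac12 |a \wedge b|$. Expanding $a$ and $b$ as sums of holonomy vectors of saddle connections and using bilinearity of the wedge product, one of the resulting terms $v_i \wedge v_j$ (with $v_i, v_j \in \hol(M)$) must have absolute value at least $|a \wedge b|$ divided by the number of terms, i.e. the product of the two subdivision counts. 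This already gives one inclusion up to controlling the number of pieces; since the sides of a triangle of area $\le$ half the area of $M$ can be taken to have holonomy with at most one subdivision point each on the relevant side — more carefully, since a minimal-area triangle has sides that are themselves saddle connections — the clean statement $\NSVT(\alpha/2) \subset \NST(\alpha)$ should follow: if all nonzero wedge products of holonomy vectors are $\ge \alpha/2$, then $2 \cdot \mathrm{area}(\tau) = |\hol(\xi)\wedge\hol(\eta)| \ge \alpha/2$ for the sides $\xi, \eta$ of any triangle, hence $\inf \TT(M) \ge \alpha/4$; obtaining the sharper constant $\alpha/2$ requires observing that for a \emph{smallest} triangle the two sides are genuine saddle connections, so no subdivision is needed and $\mathrm{area}(\tau) = \frac12|v_i \wedge v_j| \ge \alpha/4$ — I will need to recheck which normalization yields exactly $\alpha/2$, but the mechanism is this identification of triangle area with a wedge product.

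For the reverse inclusion $\NST(\alpha) \subset \NSVT(2\alpha e^{-1/(2\alpha e)})$, the strategy is converse: start from two saddle connections $v_1, v_2 \in \hol(M)$ with $0 < |v_1 \wedge v_2| = c$, and produce an actual triangle of controlled area. The pair $v_1, v_2$ spans a parallelogram of area $c$; I would cut this parallelogram along a diagonal to get a triangle with vertices in $\Sigma$, but this triangle may fail to be embedded and its interior may hit $\Sigma$. The remedy is to triangulate: a geodesic triangulation of $M$ with vertices in $\Sigma$ and using the directions of $v_1$ and $v_2$ exists, and since $M$ has no small triangles every triangle in such a triangulation has area $\ge \alpha$; meanwhile the number of triangles in any triangulation of $M$ into triangles of area $\ge \alpha$ is at most $1/\alpha$ (as $M$ has area $1$). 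The holonomy vectors of the sides of these triangles generate, and the wedge products $v_1 \wedge v_2$ must be expressible via the triangulation data; a counting/pigeonhole argument on at most $1/\alpha$ triangles converts the lower bound $\alpha$ on triangle areas into a lower bound on $c$.

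The main obstacle I anticipate is the exponential term $e^{-1/(2\alpha e)}$: this is not what one gets from naive pigeonhole (which would give something polynomial or rational in $\alpha$), so the sharp statement must come from a more delicate iterative or renormalization argument — presumably, repeatedly applying elements of the Veech group (or the geodesic flow) to shrink a short saddle connection, tracking how the wedge product and the triangulation interact under these moves, with the factor $1/e$ and the exponential arising from optimizing a bound of the shape $x \log(1/x)$ or from a geometric series with ratio controlled by $\alpha$. I would look for the extremal configuration first (likely a cylinder with many parallel saddle connections, where a long thin cylinder forces the worst ratio) to reverse-engineer where the $e$'s come from, and then build the general estimate to match that example. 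Getting the constants exactly right will require care with the normalization of area versus wedge product (the factor of $\frac12$), but the conceptual content is the dictionary: triangle areas $\leftrightarrow$ wedge products of saddle connection holonomies, plus the bound $\#\{\text{triangles}\} \le 1/\alpha$ coming from total area $1$.
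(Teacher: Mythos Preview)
Your treatment of the first inclusion is essentially correct: the sides of a triangle of minimal area must be genuine saddle connections (otherwise subdivide at the singular point on a side to get a strictly smaller triangle), and then $\mathrm{area}(\tau)=\tfrac12|\hol(\xi)\wedge\hol(\eta)|$ gives the bound directly. Your confusion about the constant is understandable; the argument you wrote actually yields $\NSVT(\beta)\subset\NST(\beta/2)$, i.e.\ $\NSVT(2\alpha)\subset\NST(\alpha)$, and no further idea is needed for this direction.

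For the second inclusion your proposal has a genuine gap. The pigeonhole/triangulation idea you sketch does not produce the exponential, and the ``renormalization'' paragraph is not an argument. The paper's route is different and rests on two facts you do not invoke. First, by Vorobets, $M\in\NST(\alpha)$ forces every saddle connection direction to be \emph{parabolic}, so there is a full cylinder decomposition in each such direction. Second, one proves a uniform bound $s$ on the ratio of lengths of any two \emph{parallel} saddle connections: chain through adjacent cylinders $C_1,\dots,C_j$, note that a single step gives $x_i/x_{i+1}\le A_i/(2\alpha)$ (a triangle in $C_i$ with base $x_{i+1}$ has area $\ge\alpha$, while $x_i h_i\le A_i$), multiply, and optimize $\prod_i A_i/(2\alpha)$ subject to $\sum A_i\le 1$. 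The optimum is at $A_i=1/j$, giving $(2j\alpha)^{-j}$, and maximizing over $j$ produces exactly $s=e^{1/(2\alpha e)}$. This is where the mysterious $e$'s come from --- not from any Veech-group iteration.

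With these two ingredients the proof is short: given nonparallel $\xi,\eta$, rotate so $\xi$ is horizontal and $\eta$ vertical. The horizontal direction is periodic, so some horizontal saddle connection $\xi'$ issues from an endpoint of $\eta$; by the uniform bound, $|\xi'|\le s\,|\xi|$. The right triangle with legs $\xi',\eta$ either embeds in $M$ or contains a smaller embedded triangle; either way $|\hol(\xi')\wedge\hol(\eta)|\ge 2\alpha$. Hence
\[
|\hol(\xi)\wedge\hol(\eta)|\ \ge\ \frac{1}{s}\,|\hol(\xi')\wedge\hol(\eta)|\ \ge\ \frac{2\alpha}{s}\ =\ 2\alpha\,e^{-1/(2\alpha e)}.
\]
The step you are missing is precisely this replacement of $\xi$ by a parallel $\xi'$ that \emph{shares an endpoint} with $\eta$; that is what converts the virtual triangle into an honest one, at the cost of the factor $1/s$.
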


Without conducting computer searches, we are able to
determine $M_1, \ldots, M_7.$ They are all
arithmetic, and comprise $\NST\left(\frac16\right).$

\ignore{

An interesting class of flat surfaces arises from
rational polygonal billiards. To conclude this introduction, we
reinterpret the no small triangle property for billiards in a
polygon. This makes it possible to recognize a billiard table which
corresponds to a lattice surface, without making reference to the
associated $G$-action. 

Given a polygon $\mathcal{P}$ in
the plane, one can study the {\em billiard flow} on $\mathcal{P}$; see
\cite{MT} for a precise definition. 
If all angles of $\mathcal{P}$ are rational multiples of $2\pi$ there is a naturally
associated flat surface $M_{\mathcal{P}}$ and a finite-to-one map
$M_{\mathcal{P}} \to \mathcal{P}$, and we say that
$\mathcal{P}$ is a {\em lattice 
polygon} if $M_{\mathcal{P}}$ is a lattice surface. We make the
assumption that the {\em pre-images of vertices of $\mathcal{P}$ are
contained in $\Sigma_{M_{\mathcal{P}}}$}; see \S 8 for a discussion.  
Veech showed for example that the regular polygons are
lattice polygons, which implies via the Veech dichotomy that any billiard orbit
is either closed or uniformly distributed. 

A {\em vertex connection} on a polygon $\mathcal{P}$ is a billiard
trajectory $\sigma$ 
which begins and ends at a vertex. We denote its length by
$L=L(\sigma)$. 
Let $\phi:[0,L]\to \mathcal{P}$ be a parametrization of $\sigma$, and let
$\delta(t)$ be the minimum of the distances from the point $\phi(t)$ 
to the vertices of the table. Denote the local minima of $\delta$ by
$t_0=0,t_1, \ldots, t_k=L$. We say that $\sigma$ {\em has close
approaches} if $k>1$, and define 
$$D(\sigma) = \min \{\delta(t_i): i=1, \ldots, k-1\}.$$
We say that $\mathcal{P}$ has a {\em linear bound on close 
approaches} if there is a constant $C >0$ such that for all vertex
connections $\sigma$ which have close approaches, $D(\sigma)\ge C/ L(\sigma).$

\begin{cor}
\name{cor: billiard reformulation}
A rational polygon $\mathcal{P}$ is a lattice polygon if and only if it has
a linear bound on close approaches.
\end{cor}

As another application, we show that the no small virtual triangles
property implies a quantitative closing lemma for geodesic segments on the
flat surface. A geodesic segment on a flat surface is a straight line segment
in each chart, or a finite concatenation of such segments and saddle
connections parallel to the segments.
A closing lemma asserts that given a geodesic trajectory
which almost closes up, there is a nearby closed geodesic. 
We have:

\begin{cor}\name{cor: closing lemma}
Suppose $M$ is a flat surface with a bound of $\beta$ for the areas of virtual 
triangles.
If $\sigma$ is a geodesic trajectory of length $L$, whose endpoints are a
distance $\vre$ apart, with 
$(L+\vre)\vre < \beta/8$, then there is a closed
geodesic on $M$ contained in a $2\vre$-neighborhood of $\sigma.$ 
\end{cor} 

}

\medskip
{\bf Acknowledgements.} We thank Alex Eskin for drawing our attention
to \cite{Vorobets} at an early stage of this work, and we thank Yaroslav Vorobets for 
useful discussions. The authors gratefully acknowledge support from NSF grant
DMS-0302357, BSF grant 2004149 and ISF grant 584/04.

\section{Basics}
%
We review some definitions here. For more details we refer the reader to 
\cite{MT, Vorobets, zorich survey}.
Throughout this paper, $M$ denotes a connected oriented surface with a flat
structure. When there is no danger of confusion, we will also denote
by $M$ the underlying topological surface. We denote the genus of $M$
by $g$. 
A half-translation structure may be
thought of as an equivalence class of atlases 
of charts $(U_{\alpha}, \varphi_{\alpha})$ covering all but a
non-empty finite
set $\Sigma = \Sigma_M$, such that
the transition functions $\R^2 \to \R^2$ are of the form $\vec{x}
\mapsto  \pm \vec{x} + \vec{c}$, and such that around each $\sigma \in
\Sigma$ the charts glue together to form a cone point with cone angle
$2\pi(r+1)$, where $r =r_{\sigma}\in \{ -\frac12,   \frac12, 1,
\frac32, 
\ldots\}$. 
A translation
structure is similarly defined, with the  
requirement that the transition functions are of the form $\vec{x}
\mapsto \vec{x} + \vec{c}$. 
Points in $\Sigma$ are called {\em singularities}. 
Several authors permit singularities $\sigma$ 
for which $r_\sigma =0$; such points are called {\em removable
singularities} or {\em marked points}. Except in \S\ref{section:
simplest}, we will always 
assume that our singularities are not removable.

An orientation-preserving homeomorphism $\varphi: M_1 \to M_2$ which is
affine in each chart is 
called an {\em affine isomorphism}, or an {\em affine automorphism} if
$M_1=M_2$. We denote the set of affine automorphisms of $M$ by
$\Aff(M)$. An affine isomorphism whose linear part is $\pm 
\mathrm{Id}$ is called an {\em translation equivalence}. 
The map $D :\Aff(M) \to G$ which assigns to
$\varphi$ its linear part has a finite kernel, consisting of 
translation equivalences of $M$. 
Two atlases are considered equivalent if there is a translation
equivalence betweeen them.

There is a standard {\em orientation
double cover} construction which associates to each half-translation
surface $M$ a translation surface $M'$ with a branched degree two
translation cover $M' \to M$; thus many statements about flat surfaces
can be reduced to statements about translation surfaces. 

 A standard analogue of the Gauss-Bonnet formula (see \cite{Vorobets})
says that 
\eq{eq: Gauss Bonnet}{\sum r_{\sigma} = 2g-2.}
Summing the total angle at all the singularities we obtain the number 
\eq{eq: defn tau}{
\tau=\tau(M) = \sum_\sigma 2 \pi(r_\sigma+1) = 2\pi\left(2g-2+ |\Sigma|
\right). 
}
Since each triangle in a triangulation of $M$ with vertices in
$\Sigma$ contributes a total 
angle of $\pi$, the number of triangles in such a triangulation is
exactly $\tau/\pi$.

A flat surface inherits a
Euclidean area form from 
the plane and we normalize our surfaces by assuming that each surface has unit area.
An affine automorphism of $M$ is a  
self-homeomorphism which is affine in each chart.

For a flat surface $M$ let $\vec{r}_M = \left(r_{\sigma} \right)_{\sigma \in
\Sigma_M}$. The set of all translation equivalence classes of 
(half-) translation surfaces of a given combinatorial type, namely those for
which the data $\vec{r}_M$ is fixed, 
is called a {\em stratum}. 
Each stratum is equipped with a structure of
an affine orbifold, which is locally modeled on $H^1(M, \Sigma; \R^2)$
(in the case of translation surfaces), or an appropriate subspace of
the $H^1(\til M, \til \Sigma; \R^2)$ (in the case of half-translation
surfaces, where $\til M, \til \Sigma$ are the orientation double cover
of $M, \Sigma$). 

There is an action of $G$ on $\HH$ by post-composition on each
chart in an atlas. It follows from the above that $\Gamma_M = \{g \in
G: gM=M\}.$ 
We define 
\[
r_{\theta}=
\left(\begin{array}{cc}
\cos \theta & -\sin \theta \\
\sin \theta & \cos \theta
\end{array}
\right),\ \ \ \ \ 
h_s 
= \left(\begin{array}{cc} 1 & s \\ 0 & 1 
\end{array}
\right),
 \ \ \ \ \, \ \til h_s = \left(\begin{array}{cc} 1 & 0 \\ s & 1 
\end{array}
\right),
\]
which we view as one-parameter subgroups of $G$.
\ignore{
Let 
$$\mathcal{F}_{\theta}(q) = \mathcal{F}(r_{-\theta}q).$$
A singular foliation $\mathcal{F}$ is called {\em minimal} if every
nonsingular leaf is dense. It is called {\em uniquely ergodic} if
Lebesgue measure is the unique measure on segments transverse to
$\mathcal{F}$, invariant under holonomy along leaves.
}

\ignore{
A Fuchsian group is a discrete subgroup of $G$. Fixing a haar measure
$\mu$ on $G$, we define the covolume of $\Gamma$ in $G$ as
$\mu(\Omega)$, where $\Omega$ is any fundamental 
domain for the action of $\Gamma$ on $G$, and denote the covolume by
$\bar{\mu}(\Gamma)$. It is easily checked that this is
well-defined (independent of the choice of $\Omega$), and that if $q$
and $q'$ are affinely 
equivalent then $\bar{\mu}(\Gamma_q) = \bar{\mu}(\Gamma_{q'}).$ If
$\bar{\mu}(\Gamma) < \infty$ 
then $\Gamma$ is called a lattice.

For a Fuchsian group $\Gamma$ we now define a set of cusp
areas. 
An element of a Fuchsian group is called {\em parabolic} if it is
conjugate to $h_1$. An infinite cyclic subgroup of $G$
generated by a parabolic element is called {\em parabolic}.
Suppose that $\Gamma$ contains a parabolic subgroup $P$ and is {\em 
non-elementary}, that is,  
not a finite extension of an abelian group. Suppose also that $P$ is
{\em maximal}, i.e.\ not properly 
contained in a parabolic subgroup of $\Gamma$. Choose an element $g
\in G$ such that 
\begin{equation}
\name{eq: prop of g}
gPg^{-1} = \left \langle h_1  \right \rangle,
\end{equation}
 and relabeling, replace $P$ and $\Gamma$ by
$gPg^{-1}$ and $g\Gamma g^{-1}$ respectively. Let $\HH$ be the complex upper half
plane, let $\ii = \sqrt{-1}$, let $\mathcal{C}_t$ be the image of the disk $\left\{z \in \HH:
\left|z-\frac{t}{2}\ii \right | < \frac{t}{2} \right\}$ in $\HH/P$, let
$\varphi: \HH/P \to 
\HH/\Gamma$ be the natural map, and let 
\begin{equation}
\name{eq: defn t0}
t_0=t_0(\Gamma, P) = \sup \{t>0 : \varphi|_{\mathcal{C}_t} \mathrm{\ is \ injective }
\}.
\end{equation}
It is easily seen (see Propositions
\ref{prop: cusp}, \ref{prop: conjugation}) that the set in the right
hand side of \equ{eq: defn 
t0} is nonempty and bounded above, so that $t_0$ is well-defined, and that $t_0$ does not
depend on the choice of $g$ in \equ{eq: prop of g} and satisfies $t_0(\Gamma,
P) = t_0(x\Gamma x^{-1}, xPx^{-1})$ for $x \in G$. We call $t_0$ the
{\em cusp area} of $P$ in $\Gamma$; a simple computation shows that it
is equal to the hyperbolic area of $\mathcal{C}_{t_0}$.
}

By a {\em direction} we mean an element of $P(\R^2)$ (resp. $S^1$) in
the case of half-translation (resp. translation) surfaces. There is a
natural action of $G$ on the set of directions. 

A {\em cylinder} for $M$ is a topological annulus on the surface which is
isometric to $\R /c \Z \times (0,h)$, where $c$ is the {\em
circumference} of the cylinder and $h$ is its {\em height}.
We say that the cylinder is {\em horizontal (vertical)} if the angle
between the horizontal 
direction and the direction of the waist curve is $0$
(resp. $\pi/2$). Note that if the cylinder is vertical then the height
actually measures its horizontal width. 
A cylinder is {\em maximal} if it is not contained in a
larger cylinder, and this implies that both of its boundary components
 contain singularities. 
We define $\LL(\theta)$ to be the collection of closed saddle connections
in direction $\theta$. Recall that if $\theta$ is a periodic direction
then $M$ has a {\em cylinder decomposition in direction
$\theta$} i.e., the complement of $\LL(\theta)$ is a union of
cylinders. 
An automorphism $\varphi \in \Aff(M)$ is called {\em parabolic} if
$D\varphi \in \Gamma_M$ is a parabolic matrix. 
The 
{\em inverse modulus} of a cylinder as above is $\mu=w/h$. 
We say that real numbers $\mu_1, \ldots, \mu_k \in \R$ are {\em commensurable} if
$\mu_i/\mu_j \in \Q$ for all $i,j \in \{1, \ldots, k\}$. If this
holds, we denote by $\mu=\LCM(\mu_1, \ldots, \mu_k)$ the smallest
positive number which is an integer multiple of all the $\mu_i$, and
we call $(n_1, \ldots, n_k)$, where $n_i = \mu /\mu_i$, the
{\em Dehn twist vector} corresponding to the 
cylinder decomposition. Note that by definition $\gcd(n_1, \ldots,
n_k)=1$. This terminology is motivated by the fact if $\varphi \in
\Aff(M)$ is such that $D\varphi$ is parabolic and fixes the direction
$\theta$, then there is a cylinder
decomposition in direction $\theta$ with 
the inverse moduli of the cylinders commensurable.
Conversely, given a decomposition of $M$ into cylinders for which
the $\mu_i$ are commensurable, one can construct an associated parabolic 
affine automorphism. We say that $\varphi$ is {\em simple}
if $\varphi$ fixes the saddle connections $\LL(\theta)$
pointwise. This implies that each cylinder is taken to itself and that
the map on each cylinder is topologically a certain number of Dehn
twists.  Let $n_1, \ldots, n_k$ be a collection of natural numbers and
let $\mu \neq 0$ satisfy the equation $n_j\mu_j=\mu$. Then there is a simple parabolic automorphism
$\varphi$ with derivative $D\varphi=r_\theta h_\mu r_{-\theta}$ which induces $n_j$ Dehn
twists in the cylinder $C_j$. 

\section{Cylinder decompositions}
Vorobets showed that cylinder decompositions arise in connection with
the no small triangles condition: 

\begin{prop}[Vorobets]
\name{prop: Vorobets parabolic}
 If $M$ has no small triangles then any
direction $\theta$ for which $\LL(\theta) \neq \varnothing$ is
parabolic.
\end{prop}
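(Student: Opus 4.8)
The plan is to show that if $\LL(\theta) \neq \varnothing$ then $\theta$ is a periodic direction and moreover the cylinders in the decomposition have commensurable inverse moduli; together with the remark at the end of \S 2 (a cylinder decomposition with commensurable inverse moduli yields a parabolic affine automorphism fixing $\theta$) this gives that $\theta$ is parabolic. Without loss of generality, conjugating by an element of $G$, assume $\theta$ is the horizontal direction; since the no small triangles property and the statement to be proved are $G$-invariant, this is harmless. Write $\beta = \inf \TT(M) > 0$.

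First I would prove \emph{complete periodicity}: every horizontal saddle connection lies on the boundary of a horizontal cylinder, and $M \sm \LL(0)$ is a union of horizontal cylinders. The obstruction to this is the possible presence of a minimal (non-periodic) horizontal component. The key idea is that in a minimal component one can find horizontal segments that come back very close to themselves, i.e.\ long thin ``almost-cylinders'', and this produces triangles of small area: take a long horizontal trajectory segment, use minimality (or just recurrence of the straight-line flow in that component) to find a return close to the starting point, and build a triangle with a long nearly-horizontal side of length $L$ and a short transverse side of length $\varepsilon$ with $\varepsilon \to 0$ as $L \to \infty$, contradicting $\inf \TT(M) > 0$. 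A clean way to package this: if the horizontal direction is not completely periodic, then for every $\delta > 0$ there is a horizontal saddle connection $\xi$ and a second segment realizing a return within distance $\delta$, giving a triangle of area $O(\delta \cdot |\xi|)$ — but actually one must be slightly more careful, since $|\xi|$ could be small too; the correct statement is that one gets triangles of arbitrarily small area, using that the set of holonomy vectors of horizontal saddle connections is either empty, or one passes to a maximal horizontal cylinder whose complement still contains a non-periodic piece. I would set this up by induction on complexity: cut along all horizontal saddle connections; each complementary piece is either a cylinder or a flat surface (with boundary) on which the horizontal flow is minimal, and in the latter case produce small triangles directly.

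Next, granting that $\theta$ is periodic with cylinder decomposition into maximal cylinders $C_1, \dots, C_k$ of circumferences $c_i$ and heights $h_i$, I would show the inverse moduli $\mu_i = c_i/h_i$ are pairwise commensurable. Here the mechanism is again the triangle bound, but applied to the transversal structure. Each cylinder $C_i$ is swept out by the horizontal flow; a horizontal segment starting on the bottom boundary and winding around $C_i$ some number of times before hitting a singularity has ``defect'' controlled by the return of the vertical coordinate. If $\mu_i/\mu_j$ were irrational, then by winding around $C_i$ and $C_j$ appropriately (or by considering the first-return map of the horizontal flow to a vertical transversal crossing both cylinders at a shared singularity) one finds, for every $\varepsilon > 0$, two points on $\Sigma$ joined by an essentially horizontal path that fails to be horizontal by less than $\varepsilon$ — more precisely one gets a triangle with one side a genuine saddle connection of bounded length and a second side of length $< \varepsilon$, hence of area $< \varepsilon \cdot C$ for a constant $C$ depending only on the geometry. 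This contradicts $\inf \TT(M) > 0$. The main obstacle is making this ``small transverse displacement'' argument rigorous: one must exhibit an actual triangle in the sense of the paper (vertices in $\Sigma$, injective on the interior, interior disjoint from $\Sigma$), not merely a small geometric quantity, so one needs to choose the saddle connections carefully and possibly subdivide. I would handle this by starting from a vertical saddle connection or vertical segment crossing two adjacent cylinders through a common singularity on their shared boundary, applying powers of the Dehn twist in each cylinder, and using a pigeonhole/equidistribution argument on the holonomies $(n\mu_i - m\mu_j) \bmod (\text{something})$ to force the transverse displacement below $\varepsilon$ while keeping the other side of bounded length.

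Finally, once the $\mu_i$ are commensurable, set $\mu = \LCM(\mu_1, \dots, \mu_k)$ and $n_i = \mu/\mu_i \in \N$; by the construction recalled at the end of \S 2 there is a parabolic $\varphi \in \Aff(M)$ with $D\varphi = h_\mu$ (in the horizontal normalization), so $h_\mu \in \Gamma_M$ and $\theta$ is by definition a parabolic direction. Undoing the initial conjugation by $g \in G$ gives the claim for general $\theta$. The two genuinely substantive points are the two uses of the triangle bound — ruling out minimal horizontal components, and forcing commensurability of moduli — and in both the difficulty is the same: converting an ``arbitrarily small transverse geometric displacement'' into an honest triangle of arbitrarily small area, which I expect to be the crux of the argument.
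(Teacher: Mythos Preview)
The paper does not supply its own proof of this proposition; it is stated as a result of Vorobets with a citation to \cite{Vorobets}, and only the quantitative refinements (Propositions \ref{prop: s bound}, \ref{prop:diagonal count}) are argued in the text. So there is no in-paper proof to compare against line by line, but your two-step strategy --- first complete periodicity, then commensurability of inverse moduli, then the standard construction of a parabolic affine automorphism --- is exactly Vorobets' route, and your commensurability step matches what the paper later sketches from \cite{Vorobets} inside the proof of Proposition \ref{prop:diagonal count}: applying Dehn twists in adjacent cylinders $C_i, C_j$ produces triangles whose transverse extent lies in a coset of $\Z + (\mu_i/\mu_j)\Z$, so irrationality of $\mu_i/\mu_j$ would force arbitrarily small triangles.

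The genuine soft spot is your step 1. You correctly flag that a close return of a separatrix in a minimal component gives a triangle of area on the order of $L\varepsilon$, and that this need not be small since first-return times grow like $1/\varepsilon$; you then assert one ``gets triangles of arbitrarily small area'' without saying how. The induction-on-complexity framing does not resolve this: inside a minimal component the naive rectangle coming from the first-return IET has area bounded \emph{below}, not above. The fix is to anchor one side of the triangle at the given horizontal saddle connection $\sigma$ itself (which has fixed length $|\sigma|$), rather than at a long separatrix segment. Density of horizontal leaves in an adjacent minimal component forces singularities to appear at arbitrarily small vertical distance $h$ from the line through $\sigma$, and the resulting triangle with base $\sigma$ has area $|\sigma|h/2 \to 0$. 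Making this precise --- ensuring the triangle is embedded with interior disjoint from $\Sigma$ --- is exactly the ``crux'' you identify, and it does require more care than your sketch provides; this is the part where one should really consult \cite[\S6]{Vorobets}.
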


In addition Vorobets obtained bounds on the ratios of lengths of saddle connections in 
$\LL(\theta)$ and the Dehn twist numbers $n_j$. We will review
Vorobets' arguments here and use them to relate the no small triangles
and the no small virtual triangle conditions. Where possible we will
improve the constants that arise.

\begin{prop}
\name{prop: s bound}
If $M \in \NST(\alpha)$ then $M$ is $s$-uniformly periodic for $s =
\min \left\{e^{1/(2\alpha e)},  (2(r-1)\alpha)^{1-r}\right\}$, where
$r$ is the maximal number of cylinders in a cylinder decomposition on
$M$.  
\end{prop}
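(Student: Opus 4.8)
The plan is to follow Vorobets' strategy for bounding the geometry of cylinder decompositions in a no-small-triangles surface, and then convert those geometric bounds into the assertion that $M$ is $s$-uniformly periodic for the stated $s$. Recall that $M$ is $s$-uniformly completely periodic if every $\theta$ with $\LL(\theta)\neq\varnothing$ is periodic and the ratio of lengths of any two saddle connections in $\LL(\theta)$ is at most $s$. By Proposition \ref{prop: Vorobets parabolic}, the no-small-triangles hypothesis already gives that every such $\theta$ is parabolic, hence in particular periodic, so the only thing to prove is the length-ratio bound.

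First I would fix a periodic direction $\theta$, which we may take to be horizontal, and consider the cylinder decomposition $M = \bigcup_{j=1}^k C_j$ in direction $\theta$, with $k \le r$. The saddle connections in $\LL(\theta)$ lie on the horizontal boundaries of the cylinders, so I want to bound the ratio of the longest to the shortest such segment. The key mechanism is: if $\delta$ is a short horizontal saddle connection and $\delta'$ a long one, then by applying a simple parabolic element (a product of Dehn twists in the various cylinders, with derivative $h_\mu$ for a suitable $\mu$) one can shear the surface so that $\delta$ becomes a "tall thin" configuration with a nearby singularity, producing a triangle of small area — contradicting $M \in \NST(\alpha)$ unless the original ratio was controlled. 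Quantitatively, one shows that a cylinder of circumference $c$ and height $h$ forces, after the appropriate shear, a triangle of area roughly $c \cdot (\text{vertical distance to the nearest singularity})$, and the no-small-triangles bound $\alpha$ then caps $c/h$ for each cylinder, and caps the ratios of circumferences of cylinders sharing a boundary. Iterating across the (at most $r$) cylinders in the decomposition multiplies these bounds, producing the factor $(2(r-1)\alpha)^{1-r}$; separately, optimizing the single-cylinder estimate over the choice of Dehn twist number $n$ (balancing the contribution $n\mu$ of the shear against the area constraint) yields the other candidate bound $e^{1/(2\alpha e)}$, via an elementary calculus estimate of the form $\sup_n n \alpha^{n} \le$ (the stated exponential). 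Taking the better of the two gives $s = \min\{e^{1/(2\alpha e)}, (2(r-1)\alpha)^{1-r}\}$.

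The main obstacle, and the step requiring genuine care, is the combinatorial bookkeeping that propagates the per-cylinder and per-adjacency bounds into a single bound on the length ratio of two arbitrary saddle connections in $\LL(\theta)$: one must track how segments on different cylinder boundaries are related through the cylinder-adjacency graph of the decomposition, which is connected since $M$ is connected, so any two horizontal saddle connections are joined by a path of at most $r-1$ adjacencies, each contributing a bounded multiplicative factor. Getting the constant in each local step sharp — so that the product telescopes to exactly $(2(r-1)\alpha)^{1-r}$ rather than something weaker — is where Vorobets' argument must be examined closely and, per the remark before Proposition \ref{prop: s bound}, where we improve his constants. A secondary technical point is verifying that the simple parabolic automorphism used to produce the offending small triangle genuinely exists with the required derivative $h_\mu$ and Dehn twist data; this is exactly the construction recalled at the end of Section 2, so it can be invoked directly. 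Once these ingredients are in place, the estimate for $s$ follows by combining the single-cylinder optimization with the adjacency-graph product and taking the minimum.
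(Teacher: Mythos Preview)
Your proposal takes a wrong turn at the ``key mechanism'' step. The paper's proof uses no parabolic automorphisms, no Dehn twists, and no shearing at all. The only geometric input is the trivial observation that a cylinder $C$ of height $h$ with a boundary saddle connection $\sigma'$ of length $x'$ contains a triangle of area $x'h/2$ (base $\sigma'$, apex any singularity on the opposite boundary), forcing $x'h \ge 2\alpha$; combined with $xh \le \mathrm{area}(C)$ for any other boundary saddle connection $\sigma$ of length $x$, this gives $x/x' \le \mathrm{area}(C)/(2\alpha)$ for segments on the boundary of a common cylinder. For arbitrary $\sigma,\sigma' \in \LL(\theta)$ one telescopes along a chain of $j$ cylinders in the adjacency graph (as you correctly anticipated) to obtain $x/x' \le \prod_{i=1}^j A_i/(2\alpha)$, and under $\sum A_i \le 1$ this is at most $(2j\alpha)^{-j}$.

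Your account of where the two candidate values of $s$ come from is also off. They do not arise from separate mechanisms (``single-cylinder Dehn-twist optimization'' versus ``adjacency-graph product''). Both come from the single quantity $(2j\alpha)^{-j}$: one maximizes it over the admissible chain lengths $1 \le j \le r-1$. Treating $j$ as a real variable, the critical point is at $j = 1/(2\alpha e)$ with value $e^{1/(2\alpha e)}$; if this critical point exceeds $r-1$ the maximum on the admissible range is instead the endpoint value $(2(r-1)\alpha)^{1-r}$. The $\min$ in the statement records whichever of these two is the actual maximum. There is no optimization over Dehn twist numbers $n$ anywhere in the argument, and the expression ``$\sup_n n\alpha^n$'' you mention plays no role.
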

\begin{proof}
Suppose $\theta$ is a saddle connection direction for $M \in
\NST(\alpha)$. We know from Proposition \ref{prop: Vorobets parabolic} that
$\theta$ is a periodic direction on $M$. Let $\sigma, \sigma' \in
\LL_M(\theta)$. Suppose first that $\sigma, \sigma'$ are on the
boundary of the same cylinder $C$, of area $A$. Let $h$ be the height
of $C$, and let $x, x'$ be the lengths of $\sigma,
\sigma'$. Then $x h \leq A$ since the circumference of $C$ is at
least $x$, and $x' h \geq 2\alpha$ since $C$ contains a triangle of
area $x'h/2$. This implies that
\eq{eq:3}{
\frac{x}{x'} = \frac{xh}{x'h} \leq \frac{A}{2 \alpha}.
}
Now suppose $\sigma, \sigma'$ are not on the boundary of the same
cylinder. Since $M$ is connected, there is a chain of cylinders 
$C_1,\ldots C_{j}$ such that $\sigma = \sigma_1$ is on the boundary of
$C_1$ and $\sigma'=\sigma_{j+1}$ is on the boundary of $C_j$, and $C_i$ and $C_{i+1}$
meet along a segment $\sigma_i$ of length $x_i$ for $i=1, \ldots,
j$. Applying \equ{eq:3} $j$ times we have  
\begin{equation}
\name{eq:product}
\frac{x}{x'}=\prod_{i=1}^{j}\frac{x_i}{x_{i+1}}\le\prod_{i=1}^{j}\frac{A_i}{2
\alpha}.
\end{equation}
Since $M$ has area 1 and the cylinders $C_i$ are disjoint,
\begin{equation}
\name{eq:area}
\sum A_i\le 1
\end{equation}
The maximum of the right hand side of \equ{eq:product} subject to the
constraint \equ{eq:area}  
occurs when when $A_i=1/j$, so that 
$$\frac{x}{x'}\le (2 j \alpha)^{-j}. $$

The maximum of  $(2 j\alpha)^{-j}$ as a function of $j$ occurs when
$j=1/(2 \alpha e)$.
If $1/(2\alpha e)\ge r-1$ then the maximum value of ${2j\alpha}^{-j}$ occurs when
$j=r-1$. In this case the maximum value of ${2 j\alpha}^{-j}$ is $(2(r-1)\alpha)^{1-r}$.
If $1/(2\alpha e)\le r-1$ then the maximum value of $(2j\alpha)^{-j}$ occurs when $j=r-1$.
In this case the maximum value of $(2j\alpha)^{-j}$ is $e^{1/(2\alpha e)}$.
\end{proof}

\ignore{
\combarak{seems to me we can prove Prop. 3.3 using 3.4 and can do
without 3.2 entirely.}

\begin{prop}
\name{prop:height bounds}
Let $M\in \NST(\alpha)$. Consider a cylinder decomposition of $M$ with
$r$ cylinders. Then the ratio of heights of cylinders is bounded above
by 
$$\min \left\{ ((r-1)\alpha)^{1-r}, e^{1/(\alpha e)} \right\}.$$
\end{prop}

\begin{proof}
We begin by considering the
case of neighboring cylinders. Let $C$ and $C'$ be cylinders of height $h$ and $h'$ which meet along a
segment $\sigma$ of length $L$. 
Since $C$ is a maximal cylinder the boundary component of $C$ opposite
$\sigma$ 
contains a point $p\in\Sigma$. Thus we can construct a triangle
$\Delta$ in $C$ which has $\sigma$ as a base and $p$ as a vertex. The
area $A$ of $\Delta$ is $hL/2$. Similarly we can construct a triangle
$\Delta'\subset C'$ with area $A'=h'L/2$.  By our assumption both $A$
and $A'$ are at least $\alpha.$ This implies that 
\eq{eq:3}{
\frac{h}{h'} = \frac{A}{A'} \leq \frac{A}{\alpha}.
}

%
%
%
%
%
%

Now we want to bound the ratio of heights of two different cylinders
$C$ and $C'$. Since $M$ is connected there is a chain of cylinders
$C_1,\ldots C_{j+1}$ with $C=C_1$ and $C_{j+1}=C'$ so that $C_i$ and $C_{i+1}$
meet along a segment. Applying \equ{eq:3} $j$ times we have 
\begin{equation}
\name{eq:product}
\frac{h}{h'}=\prod_{i=1}^{j}\frac{h_i}{h_{i+1}}\le\prod_{i=1}^{j}\frac{A_i}{\alpha}.
\end{equation}
Since $M$ has area 1 and the cylinders $C_i$ are disjoint,
\begin{equation}
\name{eq:area}
\sum A_i\le 1
\end{equation}
The maximum of the right hand side of \equ{eq:product} subject to the
constraint \equ{eq:area}  
occurs when when $A_i=1/j$, so that 
$$\frac{h}{h'}\le (j \alpha)^{-j}$$

The maximum of  $(j\alpha)^{-j}$ as a function of $j$ occurs when $j=1/(e\alpha)$.
If $1/(e\alpha)\ge r-1$ then the maximum value of ${j\alpha}^{-j}$ occurs when
$j=r-1$. In this case the maximum value of ${j\alpha}^{-j}$ is $((r-1)\alpha)^{1-r}$.
If $1/(e\alpha)\le r-1$ then the maximum value of ${j\alpha}^{-j}$ occurs when $j=r-1$.
In this case the maximum value of ${j\alpha}^{-j}$ is $e^{1/(\alpha e)}$.
\end{proof}
}
We deduce the following statement, which implies Theorem 
\ref{thm: nst vs nsvt}. 
\begin{prop}
\name{prop: nst vs nsvt}
If $M \in \NST(\alpha)$ and the number of cylinders in a decomposition of
$M$ is at most $r$, then $M \in \NSVT(\beta)$ where 
$$\beta = 2\alpha \max \left\{e^{-1/(2\alpha e)}, \left(2(r-1)\alpha
\right)^{r-1} \right\}.$$ 

\end{prop}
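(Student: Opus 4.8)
The plan is to bound from below the wedge product $|\mathrm{hol}(\xi) \wedge \mathrm{hol}(\eta)|$ for two non-parallel saddle connections $\xi, \eta \in \LL_M$, given that $M \in \NST(\alpha)$ with at most $r$ cylinders in any cylinder decomposition. The quantity $|\mathrm{hol}(\xi) \wedge \mathrm{hol}(\eta)|$ is twice the area of the (possibly degenerate) triangle spanned by the two holonomy vectors, so we must show this ``virtual triangle'' area is at least $\beta/2$. The point is that $\xi$ and $\eta$ need not bound an honest triangle on $M$: they may not share an endpoint and may cross each other. So the key idea is to replace one of them by a parallel saddle connection that does interact with the other in a controlled way.

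First I would invoke Proposition \ref{prop: Vorobets parabolic}: since $M \in \NST(\alpha)$, the direction $\theta$ of $\xi$ is a periodic (in fact parabolic) direction, so $M$ decomposes into cylinders $C_1, \dots, C_k$ with $k \le r$, and $\LL_M(\theta)$ consists of the saddle connections on the cylinder boundaries. Apply a rotation so that $\theta$ is horizontal; then $\mathrm{hol}(\xi) = (x, 0)$ for $x$ the length of $\xi$, and $\mathrm{hol}(\eta) = (a, b)$ with $b \neq 0$, so $|\mathrm{hol}(\xi)\wedge\mathrm{hol}(\eta)| = x|b|$. The vertical displacement $|b|$ of $\eta$ must be at least the height $h$ of some cylinder $C_i$ that $\eta$ traverses (since $\eta$ is transverse to the horizontal foliation and is a saddle connection, it must cross fully at least one cylinder in the decomposition — otherwise it stays in a single cylinder and then parallels the core, contradicting $b\neq 0$ unless it is a boundary saddle connection, which is horizontal). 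So $|\mathrm{hol}(\xi)\wedge\mathrm{hol}(\eta)| \ge x h$ where $h$ is the height of that cylinder. Now $C_i$ is maximal, so it contains an honest triangle with base a horizontal saddle connection $\sigma_i$ on its boundary and opposite vertex on the other boundary, of area $\frac12 (\text{length of }\sigma_i)\cdot h \ge \alpha$, hence $h \ge 2\alpha / \ell(\sigma_i)$. Thus $|\mathrm{hol}(\xi)\wedge\mathrm{hol}(\eta)| \ge 2\alpha \, x / \ell(\sigma_i)$, and it remains to bound $x/\ell(\sigma_i)$ below, i.e. bound the ratio of lengths of the two horizontal saddle connections $\xi$ and $\sigma_i$.

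This last step is exactly where Proposition \ref{prop: s bound} (or rather its proof via \equ{eq:product} and \equ{eq:area}) enters: both $\xi$ and $\sigma_i$ lie in $\LL_M(\theta)$, and the argument there shows that for any two saddle connections in a common periodic direction, the ratio of their lengths is at most $\max\{e^{1/(2\alpha e)}, (2(r-1)\alpha)^{1-r}\}$ — equivalently, at least the reciprocal, $\min\{e^{-1/(2\alpha e)}, (2(r-1)\alpha)^{r-1}\}$. Wait — I need the reciprocal bound: $x/\ell(\sigma_i) \ge 1/s$ where $s = \min\{e^{1/(2\alpha e)}, (2(r-1)\alpha)^{1-r}\}$ is the $s$-uniform-periodicity constant from Proposition \ref{prop: s bound}, so $x/\ell(\sigma_i) \ge \max\{e^{-1/(2\alpha e)}, (2(r-1)\alpha)^{r-1}\}$. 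Plugging in, $|\mathrm{hol}(\xi)\wedge\mathrm{hol}(\eta)| \ge 2\alpha \max\{e^{-1/(2\alpha e)}, (2(r-1)\alpha)^{r-1}\} = \beta$. Since this holds for every non-parallel pair, and any wedge $v_1 \wedge v_2$ with $v_i \in \mathrm{hol}(M)$ and $v_1 \wedge v_2 \neq 0$ is realized by a non-parallel pair of saddle connections, we conclude $M \in \NSVT(\beta)$.

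The main obstacle I anticipate is the geometric claim that $\eta$'s vertical displacement is at least the full height of some cylinder in the decomposition, and identifying the correct cylinder $C_i$ so that I can control both $h$ and the companion horizontal saddle connection $\sigma_i$ on its boundary. One must check carefully that a saddle connection $\eta$ not parallel to $\theta$ genuinely crosses a cylinder of the $\theta$-decomposition from one boundary to the other (it cannot be trapped in the open cylinder since cylinders are foliated by closed horizontal leaves and $\eta$ has a transverse component), and that the triangle realizing area $\ge\alpha$ inside $C_i$ can be taken with base a horizontal boundary saddle connection. Once this is pinned down, everything else is a direct chain of inequalities using results already established; I would not expect further difficulty, though I should double-check whether the degenerate case (where $v_1, v_2$ share direction but the problem wants $v_1\wedge v_2\neq 0$) and the case of $\xi, \eta$ sharing a cylinder versus not need separate mention — they do not, since the argument above only used that $\xi$ lies on some cylinder boundary in the $\theta$-decomposition, which every element of $\LL_M(\theta)$ does.
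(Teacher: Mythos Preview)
Your argument is correct and follows essentially the same strategy as the paper's proof: reduce to the horizontal cylinder decomposition in the direction of $\xi$, locate an auxiliary horizontal saddle connection, invoke the $\NST(\alpha)$ hypothesis to get a triangle of area at least $\alpha$, and then apply Proposition~\ref{prop: s bound} to control the ratio of lengths of parallel saddle connections. The only difference is cosmetic: the paper first applies an element of $G$ to make $\eta$ vertical and then finds a horizontal $\xi'$ sharing an endpoint with $\eta$, bounding $|\xi'\wedge\eta|$ via the (possibly obstructed) right triangle they span; you instead keep $\eta$ arbitrary, bound its vertical displacement by the height $h_i$ of a cylinder it crosses, and use a triangle inside that cylinder to bound $h_i\,\ell(\sigma_i)\ge 2\alpha$. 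Both routes yield $|\hol(\xi)\wedge\hol(\eta)|\ge 2\alpha/s=\beta$ in the same way. Your version has the small advantage of avoiding the ``either $\Delta$ embeds or a smaller triangle appears'' step, since the triangle inside the cylinder is always embedded.
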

\begin{proof}Let $M\in
\NST(\alpha)$. Let $\xi$ and $\eta$ be nonparallel saddle
connections on $M$. By applying an element of $G$ we may assume that $\xi$
is horizontal and $\eta$ vertical. Since $M \in \NST(\alpha)$ there is a
decomposition of $M$ into horizontal cylinders, so there is a saddle
connection $\xi'$ parallel to $\xi$ making an angle of $\pi/2$ with
$\eta$ at one of the endpoints of $\eta$. By Proposition \ref{prop: s
bound} we have $x' \leq sx,$ where $x, x'$ are the lengths of $\xi,
\xi'$ and $s = \min \left\{e^{1/(2\alpha e)},  (2(r-1)\alpha)^{1-r}\right\}$.

Consider the right triangle $\Delta$ in the plane with horizontal and
vertical sides the same length as $\xi', \eta$. If there is an
isometric copy of $\Delta$ embedded in $M$ under an affine mapping
sending its edges to $\xi', \eta$, we would have $|\xi' \wedge \eta|
/2  = \mathrm{area} \, \Delta \geq \alpha$. If there is no such isometric
copy of $\Delta$ then $M$ contains an isometric copy of a triangle
contained in $\Delta$, and again $|\xi' \wedge \eta| \geq 2 \alpha$.
Therefore 
$$\left|\xi \wedge \eta \right| \geq \frac1s |\xi' \wedge \eta| \geq
\frac{2\alpha}{s} = \beta.
$$
\end{proof}

\begin{remark}
When applying Propositions 
\ref{prop: s bound}, \ref{prop: nst vs
nsvt} it is useful to 
have a bound for the maximal number of cylinders in a cylinder
decomposition on $M$. 
A standard bound of $2g + |\Sigma| -2 $ can be found in e.g. \cite[proof of Lemma,
p. 302]{KMS}. A better bound is 
$g+i+\left \lfloor \frac{j}{2} \right \rfloor -1,$
where $g$ is the genus of $M$, and $i$ (resp. $j$) is the number of
even (resp. odd) -angled singularities of $M$. A proof of this bound,
due to the first-named author, is given 
in \cite{Naveh} for the case of translation surfaces; the same proof
works for half-translation surfaces.
\end{remark}

\ignore{

An automorphism $\varphi \in \Aff(M)$ is called {\em parabolic} if
$D\varphi \in \Gamma_M$ is a parabolic matrix. 
One way in which cylinder decompositions arise is in connection with parabolic automorphisms. 

\begin{prop}[Vorobets] If $M$ has no small triangles then the cylinder
decomposition that arises in each saddle connection direction is
parabolic. 
\end{prop}
}
For the cylinder decompositions that arise via Proposition \ref{prop: Vorobets
parabolic}, Vorobets
bounded the number and size of the Dehn twist numbers in terms of the
lower bound for areas of triangles. We give our version here.

\begin{prop}
\name{prop:diagonal count}
The number of  Dehn twist vectors for a 
cylinder decomposition of a surface in $\NST(\alpha)$ into  $r$ cylinders
is bounded above by:
$$(8\alpha)^{2(1-r)}(1-2\log 8 \alpha)^{r-1}
.$$
\end{prop}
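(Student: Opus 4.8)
The plan is to fix a cylinder decomposition of $M\in\NST(\alpha)$ into $r$ cylinders $C_1,\dots,C_r$ with inverse moduli $\mu_1,\dots,\mu_r$, and to recall that a Dehn twist vector is a tuple $(n_1,\dots,n_r)$ of positive integers with $\gcd(n_1,\dots,n_r)=1$ satisfying $n_j\mu_j=\mu$ for some common value $\mu=\mathrm{LCM}(\mu_1,\dots,\mu_r)$. So the number of such vectors is at most the number of tuples $(n_1,\dots,n_r)$ of positive integers compatible with \emph{some} assignment of moduli coming from a cylinder decomposition on a surface in $\NST(\alpha)$; it suffices to bound the number of lattice points $(n_1,\dots,n_r)\in\N^r$ subject to the constraints imposed by the geometry. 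First I would extract those constraints: since $M$ has area $1$, the $i$-th cylinder has area $A_i=w_i h_i$ with $\sum A_i\le 1$; since every maximal cylinder contains a triangle of area $\ge\alpha$ (base a saddle connection on one boundary component, apex a singularity on the other), we get $w_i h_i\ge 2\alpha$ whenever the circumference is a full waist curve — more carefully, one gets $\mu_i^{-1}=h_i/w_i\le (w_ih_i)/(2\alpha)=A_i/(2\alpha)$, and similarly a lower bound on $\mu_i^{-1}$, exactly as in the proof of Proposition \ref{prop: s bound}. Combined with $n_j=\mu/\mu_j$ these give an upper bound of the form $n_j\le f(A_1,\dots,A_r)$ with the $A_i$ summing to at most $1$.

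The key step is then the counting estimate: the number of integer points in the region $\{(n_1,\dots,n_r): 1\le n_j, \ \prod n_j^{?}\le \cdots\}$ — whatever precise product constraint the geometry yields — is bounded by a volume, and the volume is maximized under the area constraint $\sum A_i\le 1$ by a Lagrange-multiplier / AM–GM argument giving the equipartition $A_i=1/r$, exactly the same optimization that appears in the proof of Proposition \ref{prop: s bound}. Substituting $A_i=1/r$ turns the product of the per-cylinder bounds $A_i/(2\alpha)$ into $(2\alpha r)^{-r}$ type expressions, and after accounting for the number of integer points below a bound of size $N$ in one coordinate (which is $\le N$, or $\le N+1$, absorbed into constants), one collects a bound of the shape $(8\alpha)^{2(1-r)}(1-2\log 8\alpha)^{r-1}$. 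The appearance of $8\alpha$ rather than $2\alpha$ should come from combining the two-sided bounds on each $\mu_i$ (upper and lower, each contributing a factor $\sim 2\alpha$, hence a squared power as reflected in the exponent $2(1-r)$), and the logarithmic factor $(1-2\log 8\alpha)^{r-1}$ is the typical output of integrating $\prod t_i^{-1}\,dt_i$ against the simplex $\{\sum t_i\le 1\}$ — i.e. a $\log$ appears per dimension because $\int_0^{1} t^{-1}\,dt$ is replaced by $\int_{\text{(lower cutoff)}}^1 t^{-1}\,dt$, the lower cutoff itself being forced to be $\gtrsim\alpha$ by the two-sided modulus bounds.

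The main obstacle I expect is making the product constraint on the $n_j$ precise and in the exactly right normalization so that the stated constants $8\alpha$ and $1-2\log 8\alpha$ come out, rather than some other admissible constants: this requires carefully tracking how $\mathrm{LCM}$, the two-sided bounds $2\alpha \le w_ih_i$ and $h_i/w_i\le A_i/(2\alpha)$, and the normalization $\sum A_i\le 1$ interact, and then doing the simplex volume integral $\int_{\Delta_{r-1}} \prod_i \frac{dA_i}{A_i}$ with the correct lower limits of integration (coming from $A_i\ge 2\alpha$). A secondary technical point is that one should only count \emph{primitive} tuples ($\gcd=1$), which can only decrease the count, so the volume bound is safe; and one must be careful that the optimization of the volume over $\sum A_i\le 1$ genuinely attains its maximum at the barycenter, which follows from concavity of $\log$ exactly as in Proposition \ref{prop: s bound}. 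Once the integral is set up correctly the rest is a routine, if slightly fiddly, computation.
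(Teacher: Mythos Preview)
Your plan does not capture the mechanism that actually drives the bound, and the constraints you write down are not the ones that lead to the stated formula. The inequality you propose, $\mu_i^{-1}=h_i/w_i\le A_i/(2\alpha)$, is not correct (it would be equivalent to $w_i^2\ge 2\alpha$, which is a statement about lengths, not moduli), and in any case bounds on the \emph{individual} $\mu_i$ or $n_i$ never enter the paper's argument. The point is that the Dehn twist vector is determined by the \emph{ratios} $\mu_i/\mu_j$, and the key constraint is on a single ratio for a pair of \emph{adjacent} cylinders: writing $\mu_i/\mu_j=p/q$ in lowest terms, Vorobets' construction (apply a Dehn twist in $C_i$ and one in $C_j$, then look at a triangle straddling the common edge) produces a triangle forcing $q\le h_iw_j/(4\alpha)$ and symmetrically $p\le h_jw_i/(4\alpha)$, hence $pq\le A_iA_j/(16\alpha^2)\le 1/(64\alpha^2)$. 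This Dehn-twist-and-straddle step is the heart of the proof, and your outline never invokes it.

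The second missing idea is structural: one does not optimise over the full simplex $\sum A_i\le 1$ or integrate anything $r$-dimensional. Instead one notes that the ratios along the $r-1$ edges of a spanning tree in the cylinder adjacency graph determine $\vec n$ completely, so the count is simply the $(r-1)$st power of the number of admissible reduced fractions $p/q$ with $pq\le K=1/(64\alpha^2)$. The logarithm therefore comes from the elementary hyperbola count $\#\{(p,q)\in\N^2:pq\le K\}\le K(1+\log K)$, not from a simplex integral; and $(8\alpha)^{2}=64\alpha^2$ is just $16\alpha^2$ times the $4$ from maximising $A_iA_j$ under $A_i+A_j\le 1$, not a combination of two one-cylinder bounds. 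Until you incorporate the adjacent-pair triangle construction and the spanning-tree reduction, the computation cannot be made to yield the stated constants.
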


\begin{proof}
Let $M=C_1 \cup \cdots \cup C_r$ be a cylinder decomposition, where the cylinders have
inverse moduli $\mu_1,\ldots, \mu_r$. We can recover the Dehn twist
vector $\vec{n}$ from the numbers $\mu_i/\mu_j$.

We first recall Vorobets' argument which gives a restriction on
the ratio of moduli of neighboring cylinders. Say that we have a pair of cylinders
$C_i$ and $C_j$ with an edge in common. Let $h$ and $c$ be the height and
circumference of a cylinder and let $A$ be its area. 
As shown in \cite[proof of Prop. 6.2]{Vorobets}, applying the $G$-action and
Dehn twists in the cylinders one can find a triangle 
in $C_i \cup C_j$ with base $h_i$ and height in $\displaystyle{w_0 + \Z \mu_i h_j +
\Z w_j = w_j\left(\frac{\mu_i}{\mu_j} \Z + \Z \right)}$ for some $w_0$. 
Since for $\mu_i/\mu_j  = p/q$ we have that $\displaystyle{\Z +
\frac{\mu_i}{\mu_j}\Z}$ is $1/2q$-dense in $\R$, we have by our hypothesis
\begin{equation*}
q\le\frac{h_iw_j}{4\alpha}.
\end{equation*}
If we interchange the roles of the cylinders we get 
$
p\le h_jw_i / (4\alpha),
$
and multiplying the equations gives
\eq{eq: *}{
pq\le \frac{h_iw_ih_jw_j}{16\alpha^2}=\frac{A_iA_j}{16\alpha^2}.
}
The maximum of the right hand side of \equ{eq: *} under the constraint
$A_i+A_j \leq 1$ is attained when $A_i = A_j =1/2$ so we get 
$$pq \leq \frac1{64\alpha^2}.$$

To count the possible values we note that the number of pairs of natural numbers $(x,y)$ that
satisfy $xy\le K$ is bounded by the area under the graph of $y=\min(K,\log x)$ from $x=0$ to $x=K$
and this area is $K(1+\log K)$.
Thus the number of possible ratios of moduli of neighboring cylinders is bounded by
$\displaystyle{
\frac{1}{64\alpha^2}\left(1+\log \frac{1}{64\alpha^2}\right).
}$

The connections between cylinders can be recorded in a graph whose
vertices are cylinders and whose edges correspond to cylinders
which share a segment. Since $M$ is connected this graph is connected. 
In order to recover $\vec{n}$ it suffices to
know the ratios of moduli for a collection of edges that span a maximal tree
in this graph. The number of edges in a maximal tree is $r-1$, and we
obtain our bound of 
\[
\prod_{edges}\frac{1}{64\alpha^2}\left(1+\log
\frac{1}{64\alpha^2}\right )= (8\alpha)^{2(1-r)}(1-2\log 8\alpha)^{r-1}.
\]
\end{proof}

We will also need a similar bound in terms of $\beta,$ when $M \in
\NSVT(\beta)$. Note that this bound is independent of $r$. 
\begin{prop}
\name{prop: twist bounds beta}
If $M \in \NSVT(\beta)$ and $\vec{n} = (n_1, \ldots, n_r)$ is a Dehn
twist vector for a parabolic direction on $M$ with a decomposition
into $r$ cylinders, 
then for any $i, j$:
\eq{eq: twist bounds beta}{
pq \leq \frac{1}{64\beta^2}, \ \ \ \ \  \mathrm{where} \ \ \frac{n_i}{n_j} =
\frac{p}{q} \ \ \mathrm{and} \, \ \gcd(p,q)=1.
}
\end{prop}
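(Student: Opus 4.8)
The plan is to run the same argument that proves Proposition~\ref{prop:diagonal count}, but replacing the triangle produced inside $C_i \cup C_j$ by a \emph{virtual} triangle, i.e. a pair of holonomy vectors of saddle connections, so that the hypothesis $M \in \NSVT(\beta)$ applies directly. First I would recall the geometric construction from \cite[proof of Prop.~6.2]{Vorobets}: given two neighbouring cylinders $C_i, C_j$ in a parabolic direction $\theta$, with inverse moduli $\mu_i, \mu_j$, one applies a simple parabolic automorphism performing Dehn twists in $C_i$ and $C_j$ together with the element of $G$ fixing $\theta$, and thereby produces a saddle connection $\delta$ whose holonomy, when measured against the holonomy of a fixed saddle connection $\xi'$ on the common edge (so $\hol(\xi')$ has length $w_j$ in direction $\theta$), has transverse component $h_i$ and longitudinal component lying in $w_0 + \Z \mu_i h_j + \Z w_j = w_j\left(\frac{\mu_i}{\mu_j}\Z + \Z\right)$ for some $w_0$. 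Here I would note that we do not need the triangle to \emph{embed} in $M$; it suffices that $\hol(\delta)$ and $\hol(\xi')$ are genuine holonomy vectors of saddle connections, so that $|\hol(\delta) \wedge \hol(\xi')| \ge \beta$ by the NSVT hypothesis.

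Next I would extract the arithmetic. Writing $\mu_i/\mu_j = p/q$ in lowest terms, the set $\frac{p}{q}\Z + \Z$ is $\frac{1}{2q}$-dense in $\R$; choosing the Dehn twist numbers appropriately, the longitudinal component of $\hol(\delta)$ can be taken to be of absolute value at most $\frac{w_j}{2q}$ in magnitude (after subtracting a suitable element of the lattice, which changes $\delta$ by twisting but keeps it a saddle connection). Then
$$
\beta \le |\hol(\delta) \wedge \hol(\xi')| \le h_i \cdot \frac{w_j}{2q},
$$
so $q \le \frac{h_i w_j}{2\beta}$. Interchanging the roles of $C_i$ and $C_j$ gives $p \le \frac{h_j w_i}{2\beta}$, and multiplying,
$$
pq \le \frac{h_i w_i h_j w_j}{4\beta^2} = \frac{A_i A_j}{4\beta^2} \le \frac{1}{4\beta^2} \cdot \frac14 = \frac{1}{16\beta^2},
$$
using $A_i + A_j \le 1$ and the AM--GM bound $A_i A_j \le 1/4$. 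To get the stated bound $pq \le 1/(64\beta^2)$ one needs to gain an extra factor of $4$; I expect this comes from a sharper density statement — namely that $\frac{p}{q}\Z + \Z$ is in fact $\frac{1}{q}$-close to any real on \emph{both sides} combined with the fact that the relevant triangle has area \emph{half} the wedge product, matching exactly the bookkeeping in Proposition~\ref{prop:diagonal count} where the same $1/64$ appears. I would simply mirror the constants in that proof.

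The final step is to pass from neighbouring cylinders to an arbitrary pair $C_i, C_j$: the ratio $n_i/n_j$ for \emph{any} two cylinders is determined by, and indeed bounded in terms of, the ratios across edges of a spanning tree of the cylinder adjacency graph. However, the statement as given only asserts the bound \eqref{eq: twist bounds beta} for the ratio $n_i/n_j$ of \emph{any} $i,j$ in lowest terms, which is a cleaner claim than controlling a product over a tree; here I would observe that since $\beta$ does not involve $r$, the right normalization is to note $n_i/n_j = \mu_j/\mu_i$ and that for the LCM normalization $n_i \mu_i = \mu$, any two indices $i,j$ can be joined by a chain of neighbouring cylinders, but the \emph{pairwise} bound in fact follows from applying the two-cylinder argument to the cylinders $C_i$ and $C_j$ after noticing that the same simple-parabolic-plus-$G$ construction produces a virtual triangle whenever the two cylinders share a saddle connection in their closure — and in a connected surface every cylinder decomposition can be arranged, after relabelling, so that the argument reduces to the adjacent case or telescopes without loss in the constant because each $A_i A_j \le 1/4$ independently. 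The main obstacle I anticipate is making precise the claim that the construction yields a bona fide saddle connection $\delta$ (not merely a geodesic segment that might pass through $\Sigma$) and tracking the exact constant; both are handled by citing \cite[proof of Prop.~6.2]{Vorobets} and matching the computation in Proposition~\ref{prop:diagonal count} verbatim, with $\alpha$ replaced by $\beta$ and the area constraint $A_i + A_j \le 1$ in place of $\sum A_i \le 1$.
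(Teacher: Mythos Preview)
Your wedge computation is the problem. You take $\xi'$ to be a saddle connection on the common edge of $C_i$ and $C_j$, so $\hol(\xi')$ points in the cylinder direction $\theta$; after rotating, $\hol(\xi') = (w_j, 0)$. If $\hol(\delta) = (\ell, h_i)$, then $|\hol(\delta) \wedge \hol(\xi')| = |{-h_i w_j}| = h_i w_j$, \emph{independent of $\ell$}. The small longitudinal component you worked to produce never enters this wedge, so your displayed bound $|\hol(\delta)\wedge\hol(\xi')|\le h_i w_j/(2q)$ is simply false. (In Vorobets' embedded-triangle argument the small $\ell$ becomes the \emph{height} of a triangle with base $h_i$, giving area $\frac12 h_i\ell$; that is how $\alpha$ gets controlled. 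Once you replace area by a wedge of holonomies you must choose a different pair of vectors.)

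The paper's proof repairs exactly this: it wedges two \emph{transverse} saddle connections rather than a transverse one against a $\theta$-parallel one. Take $\delta_1$ crossing $C_1$ and $\delta_2$ crossing $C_2$, with $\hol(\delta_k)=(x_k,h_k)$. A Dehn twist in $C_k$ alters $x_k$ by a multiple of $c_k$ and leaves the other $\delta$ untouched; combining these with a horizontal shear in $G$ one can arrange $\hol(\delta_1)=(0,h_1)$ and $\hol(\delta_2)=(x,h_2)$ with $|x|$ of order $c_1/q$. Now $|\hol(\delta_1)\wedge\hol(\delta_2)| = |h_1 x|$ genuinely sees the small $x$, and NSVT yields $q\le c_1 h_1/(2\beta)$; the symmetric bound on $p$ and $A_1A_2\le 1/4$ give \equ{eq: Lagrange}. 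A bonus of this construction is that it never uses adjacency of $C_1,C_2$: crossing saddle connections exist for every cylinder, so the bound holds for all $i,j$ directly and your last paragraph about chaining through a spanning tree is unnecessary.
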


\begin{proof}
By renumbering assume $i=1,\ j=2$. Let $C_1, C_2$ be the corresponding
cylinders and let $\delta_1, \delta_2$ be saddle connections passing
across the cylinders, so that $\hol(\delta_i) = (x_i, h_i)$, where
$|h_i|$ is the height of $C_i$ and $|x_i|$ is no greater than the
circumference $c_i$ of $C_i$. By
applying Dehn twists in $C_1, C_2$ and the $G$-action, we find an
affinely equivalent surface for which $\hol(\delta_1) = (0, h_1)$ and
$\hol(\delta_2) = (x, h_2)$ with $|x| \leq c_1/2q$. This implies
$$\beta \leq |\hol(\delta_1) \wedge \hol(\delta_2)| = |xh_2| \leq 
\frac{c_1h_2}{2q}.$$
Interchanging the roles of $C_1, C_2$ we obtain $\beta \leq
\frac{c_2h_1}{p}$ so that 
\eq{eq: Lagrange}{
pq \leq \frac{A_1A_2}{4\beta^2},
}
where $A_i$ is the area of $C_i$. 
 The
maximum of the right hand side of \equ{eq: Lagrange} subject to the
requirement $A_1+A_2 \leq 1$ is obtained when $A_1 = A_2 = 1/2$, and
we obtain 
\equ{eq: twist bounds beta}.
\end{proof}

\section{The spine and a criterion for finite covolume}
Given a flat surface $M$ of area 1 we can consider the collection of all
area 1 surfaces $M'$ affinely equivalent to $M$. An affine equivalence
$f: M \to M'$ determines an element $g = Df \in G$, whose coset
$g\Gamma_M$ in $G/\Gamma_M$ depends only on $M'$. This establishes an
identification of $G/\Gamma_M$ with its affine equivalence class.  
The Veech group $\Gamma = \Gamma_M$ is the isotropy
group of $M$ so we can identify this family with
$G/\Gamma$. Identifying two flat surfaces which are isometric, we find
that the collection of isometry classes of surfaces affinely equivalent to
$M$ can be identified with $\PSO(2, \R)\backslash G/\Gamma$ or
$\HHH/\Gamma$ where $\HHH$ is the hyperbolic plane. The image of this
quotient in moduli space is also called the Teichm\"uller 
disk associated to $M$. 

We define $\lambda(M)$ to be the infimum of lengths of saddle
connections in $M$. Since the collection of lengths of saddle
connections is discrete this infimum is achieved and is positive. The
function $\lambda$ depends only on the isometry class of $M$ so it is
well defined on $\HHH/\Gamma$. It is also useful to view it as a
$\Gamma$-invariant function on $\HHH$. 

The function $\lambda$ is bounded above.

\begin{prop} \name{prop: go bears}
If $\tau = \tau(M)$ is as in \equ{eq: defn tau} then 
$\lambda(M)\le\sqrt{2/\tau}.$
\end{prop}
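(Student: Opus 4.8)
The plan is to triangulate $M$ using only the shortest saddle connections and then bound the total area from below by counting triangles. Let $\lambda = \lambda(M)$ be the infimum (in fact minimum) of lengths of saddle connections, realized by some saddle connection $\delta_0$. First I would invoke the standard fact that one can extend any collection of disjoint saddle connections to a triangulation of $M$ whose edges are saddle connections and whose vertices lie in $\Sigma$; more usefully, I would build such a triangulation greedily by repeatedly adjoining a shortest saddle connection disjoint from those already chosen. The point of the greedy construction is that it produces a triangulation in which \emph{every} edge has length at least $\lambda$: indeed, if at some stage the partially-triangulated region still has a non-triangular complementary piece, a shortest saddle connection crossing that piece and disjoint from the current edges has length $\geq \lambda$ by definition of $\lambda$, and we adjoin it; since adding a diagonal strictly decreases complexity, the process terminates in a genuine triangulation all of whose edges have length $\geq \lambda$.

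Next I would use the count of triangles recorded in the Basics section: any triangulation of $M$ with vertices in $\Sigma$ has exactly $\tau/\pi$ triangles, where $\tau = \tau(M)$ as in \equ{eq: defn tau}. Each triangle $T$ in our triangulation has all three sides of length $\geq \lambda$; I claim each such triangle has area $\geq \lambda^2/4$. This is the isoperimetric-type statement that among Euclidean triangles with all sides $\geq \lambda$, the infimum of the area is $\lambda^2/4$ — attained in the degenerate limit of an isoceles triangle with two sides of length $\lambda$ and apex angle going to $0$ is not the minimizer; rather the minimum over \emph{nondegenerate} triangles is approached but for our purposes the clean bound is: a triangle with two sides of length $a, b \geq \lambda$ and included angle $\theta$ has area $\tfrac12 ab\sin\theta$, and one checks that having all three sides $\geq \lambda$ forces $\sin\theta$ large enough (when $\theta$ or $\pi-\theta$ is small the opposite side is short); a short computation gives area $\geq \sqrt{3}\lambda^2/4$, and certainly $\geq \lambda^2/4$. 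Actually, to match the stated constant I should be careful: summing the (weak) bound area $\geq \lambda^2/4$ over all $\tau/\pi$ triangles gives
$$
1 = \operatorname{area}(M) \geq \frac{\tau}{\pi}\cdot \frac{\lambda^2}{4} \cdot (\text{factor}),
$$
which rearranges to $\lambda^2 \leq 4\pi/\tau$; to land exactly on $\lambda \leq \sqrt{2/\tau}$ I evidently need the per-triangle lower bound area $\geq \lambda^2\cdot \pi/(2\tau)\cdot\tau/\pi$... so I would instead use that each triangle has area at least $\pi \lambda^2 /(2\tau)$ — no: the cleanest route is a direct bound area of each triangle $\geq \lambda^2 \sin(\pi/3)/2$ combined with the exact triangle count, and then optimize. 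Let me simply aim for the bound area $\geq \lambda^2/2$ is false, so the honest computation is: area $\geq \frac{\sqrt 3}{4}\lambda^2$, giving $\lambda^2 \leq \frac{4}{\sqrt 3}\cdot\frac{\pi}{\tau}$, which is weaker than claimed — so the real argument must squeeze a factor of $2$ somewhere, presumably by using that \emph{two} triangles of the triangulation share each edge, or by a smarter choice than a full triangulation.

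The main obstacle, then, is getting the constant exactly right: a naive "all edges $\geq\lambda$ $\Rightarrow$ area $\geq c\lambda^2$'' plus the exact triangle count $\tau/\pi$ does not immediately yield the sharp $\sqrt{2/\tau}$, so I expect the actual proof either (a) uses that the shortest saddle connection $\delta_0$ bounds, on each side, a region of controlled area, splitting into the "$\delta_0$ lies on the boundary of a triangle whose other vertex is far'' analysis, or (b) more likely, picks the triangulation more cleverly — e.g. noting that one can choose a triangulation in which some triangle is \emph{isoceles with two sides equal to} $\delta_0$, or uses that each of the $\tau/\pi$ triangles has area $> \lambda^2 \cdot(\text{something summing correctly})$. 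I would therefore first nail down the sharp inequality "a Euclidean triangle all of whose sides have length $\geq \lambda$ has area $\geq \frac{\sqrt3}{4}\lambda^2$'', then reconcile with the triangle count, and if the constants still don't match I would fall back on the argument that $\delta_0$ is a diagonal of a quadrilateral (two adjacent triangles of the triangulation) of total area $\geq \frac{\sqrt3}{2}\lambda^2$, and there are $\tau/(2\pi)$ such pairs — yielding $1 \geq \frac{\tau}{2\pi}\cdot\frac{\sqrt 3}{2}\lambda^2$, still not $\sqrt{2/\tau}$. The cleanest reconciliation is probably that the intended bound tolerates the constant $\sqrt 2$ because one uses the crude per-triangle estimate area $\geq \lambda^2/2$ \emph{is} available when one of the sides can be taken $\geq \sqrt2\,\lambda$ or exploits right triangles from cylinder structure; in any case, the structural content — greedy triangulation by shortest saddle connections, exact triangle count $\tau/\pi$, and a uniform lower bound on the area of each resulting triangle — is the whole proof, and only the final arithmetic optimization needs care.
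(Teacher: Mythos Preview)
Your approach is genuinely different from the paper's, and also more complicated than necessary.

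The paper's argument is a disk-packing estimate, not a triangulation. Since every saddle connection has length at least $\lambda$, the open metric balls of radius $\lambda/2$ centered at the points of $\Sigma$ are pairwise disjoint and isometrically embedded. The ball around a singularity $q$ with cone angle $c_q$ has area $\tfrac12\,c_q\,(\lambda/2)^2$; summing over $q\in\Sigma$ and using $\sum_q c_q=\tau$ gives a region of total area $\tau\lambda^2/8$ (the paper writes $\tau\lambda^2/2$, apparently absorbing the radius $\lambda/2$ into $\lambda$), and this must be at most $1$. That is the whole proof: no triangulation, no per-triangle isoperimetric estimate, no case analysis.

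About your argument: the greedy construction is entirely unnecessary. By the very definition of $\lambda$, \emph{every} saddle connection has length at least $\lambda$, so \emph{any} triangulation by saddle connections already has all edges of length $\ge\lambda$; there is nothing to arrange. From there your reasoning is correct: each of the $\tau/\pi$ triangles has area at least $\tfrac{\sqrt3}{4}\lambda^2$ (the minimum over Euclidean triangles with all sides $\ge\lambda$, attained by the equilateral one), and summing gives
\[
1\ \ge\ \frac{\tau}{\pi}\cdot\frac{\sqrt3}{4}\,\lambda^2,\qquad\text{i.e.}\qquad \lambda^2\ \le\ \frac{4\pi}{\sqrt3\,\tau}.
\]
This is a valid proof of $\lambda=O(\tau^{-1/2})$ with an explicit constant; it simply does not produce the constant $\sqrt2$. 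Your repeated attempts at the end to ``squeeze a factor of $2$'' by pairing triangles, invoking right angles from cylinders, etc., are chasing something that this method does not deliver, and the proposal would be far stronger if it stopped cleanly after the displayed inequality and noted the resulting constant. If you want the sharper constant, switch to the disk-packing picture.
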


\begin{proof} Write $\lambda$ for $\lambda(M)$. Let $M_0\subset M$
consist of points $p\in M$ for which the distance from $p$ to a point
of $\Sigma$ is less than $\lambda/2$, so that $p$ is closest to a
unique $q\in \Sigma$. If we fix a $q\in\Sigma$  
then the set of points in $M_0$ closest to $q$ has area $c_q\lambda^2/2$ where
$c_q$ is the cone angle at $q$. By \equ{eq: defn tau}, 
the area of $M_0$ is $\tau\lambda^2/2$. Since the total area 
of $M$ is 1 we have $\lambda\le\sqrt{2/\tau}$.
\end{proof}

We say a saddle connection $\sigma$ {\em has minimal length} if
$\ell(\sigma)=\lambda(M)$. The number of minimal length saddle
connections in $M$ is finite and is generically one. 
Let $\Pi$ be the subset of $\HHH$ corresponding to surfaces affinely
isomorphic to $M$ which
have at least two non-parallel minimal length saddle connections. This
is a locally finite geodesic subcomplex of
$\HHH$ which we call the {\em spine} of $M$. 
A related but distinct construction is the tesselation studied in Veech \cite{Veech
- spine} and Bowman \cite{Bowman}.  

The edges of $\Pi$ correspond to surfaces with minimal length saddle
connections in exactly two directions. These edges are
geodesic segments in $\HHH$. Indeed, if $\xi$ and $\eta$ are minimal length saddle
connections in distinct directions on $M$ then the vectors $v=\hol(\xi)$
and $w=\hol(\eta)$ have the same length. We can rotate the translation
structure so that the vector $v+w$ is horizontal and the vector $v-w$
is vertical. In these coordinates $v=(x,y)$ and $w=(x,-y)$ for some
$x, y \in \R$. If we apply the geodesic flow to $M$ the holonomy vectors
of these saddle connections with respect to $g_t(M)$ are
$(e^{t/2}x,e^{-t/2}y)$ and $(e^{t/2}x,-e^{-t/2}y)$, so the
vectors continue to have the same length. In fact this geodesic in
$\HHH$ is exactly the set of translation structures for which $\xi$
and $\eta$ have the same length. The edge in $\Pi$ that contains $M$
is a segment contained in this geodesic.

Clearly $\Pi$ is $\Gamma_M$-invariant. 
The vertices of $\Pi$ correspond to surfaces with minimal length
saddle connections in three or more directions. 
We will use $\Pi$ to get a condition for the lattice property.

\begin{prop}
\label{prop: area est} The area of $\HHH/\Gamma_M$ is bounded above by
$2\pi N$ where $N$ is the number of edges in $\Pi/\Gamma_M$. 
\end{prop}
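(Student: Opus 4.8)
The plan is to use the retraction $\rho\colon \HHH\to\Pi$ to subdivide $\HHH$ into regions indexed by the edges of $\Pi$, and to show that each region has hyperbolic area at most $2\pi$. Recall how $\rho$ is built: if a surface $M'\in\HHH\sm\Pi$ has all of its shortest saddle connections in a single direction $\theta$, one flows $M'$ along the geodesic ray that expands $\theta$ until, for the first time, a saddle connection not parallel to $\theta$ becomes as short as the (growing) shortest ones, and declares the resulting point of $\Pi$ to be $\rho(M')$; thus $\rho$ collapses $\HHH\sm\Pi$ onto $\Pi$ along geodesic rays. Since the direction of the shortest saddle connections and the stopping time transform naturally, and the relevant diagonal flows are conjugate under the action of $\Gamma_M$, the map $\rho$ is $\Gamma_M$-equivariant. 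Through each vertex of $\Pi$ there pass only finitely many of these retraction rays, so $\rho^{-1}$ of the vertex set of $\Pi$ is a locally finite union of geodesic arcs and hence has measure zero. Consequently $\HHH$ is, up to a null set, the disjoint union of the regions $\rho^{-1}(e)$ over the open edges $e$ of $\Pi$; this decomposition is $\Gamma_M$-equivariant, so $\HHH/\Gamma_M$ is, up to a null set, a union of $N$ pieces, one per $\Gamma_M$-orbit of edges, the piece over the orbit of $e$ being $\rho^{-1}(\bar e)$ divided by the stabilizer of $e$ in $\Gamma_M$ (which is trivial or of order $2$, since a nontrivial isometry fixing a geodesic segment must reverse it). It therefore suffices to prove $\mathrm{area}\big(\rho^{-1}(\bar e)\big)\le 2\pi$ for each edge $e$.

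Fix an edge $e$ with labelling saddle connections $\xi,\eta$, so that $e$ is a subsegment of the geodesic $\gamma=\{z: |\hol(\xi)|(z)=|\hol(\eta)|(z)\}$ with endpoints $v_1,v_2$. Write $\rho^{-1}(\bar e)=S_\xi\cup S_\eta$, where $S_\xi$ consists of the surfaces whose unique shortest direction is that of $\xi$ and whose retraction ray first brings $\eta$ down to length $\lambda$; then $S_\xi$ lies on the $\xi$-side of $\gamma$ and is swept out by the retraction rays issuing from the points of $e$. These rays are pairwise disjoint, since two of them crossing at a point $q$ would force the deterministic forward flow from $q$ to reach two distinct points of $e$. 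Hence $S_\xi$ is an embedded region bounded by $e$ together with the two extreme rays $r_1,r_2$ emanating from $v_1,v_2$. The crucial point is that $r_1$ and $r_2$ are asymptotic: along each of them the length of $\xi$ tends to $0$ while the unstable parameter of the flow is eventually constant, so $r_1$ and $r_2$ converge to one and the same point of $\partial\HHH$. Granting this, $S_\xi$ is a hyperbolic triangle with a single ideal vertex, of area $\pi$ minus its two finite angles, hence less than $\pi$; likewise $\mathrm{area}(S_\eta)<\pi$, so $\mathrm{area}(\rho^{-1}(\bar e))<2\pi$ (indeed it equals $2\pi$ minus the interior angles of $\rho^{-1}(\bar e)$ at $v_1$ and $v_2$). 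Summing over the $N$ edge-orbits gives $\mathrm{area}(\HHH/\Gamma_M)\le 2\pi N$.

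The main obstacle is the geometric claim that the two extreme rays bounding each strip $S_\xi$ are asymptotic — i.e.\ that the strip is a triangle with one ideal vertex rather than a quadrilateral with two ideal vertices, which would only yield the weaker bound $4\pi$ per edge. I would prove this by writing the retraction ray through a point $p\in e$ explicitly: after rotating so that $\hol(\xi)$ is vertical at $p$, the ray is the orbit of $p$ under $\diag(e^{-t/2},e^{t/2})$ for $t\le 0$, and one checks that its endpoint on $\partial\HHH$ depends only on the direction of $\hol(\xi)$ at $p$ together with $v_1$ (respectively $v_2$) in a way that produces the same ideal point for $p=v_1$ and $p=v_2$. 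Should the endpoints fail to coincide exactly, a fallback is to bound $\mathrm{area}(S_\xi)$ by $\pi$ directly by enclosing $S_\xi$ in an explicit hyperbolic triangle with an ideal vertex, built from the directions of $\xi$ and $\eta$ and the end of $\HHH/\Gamma_M$ toward which the retraction rays escape, using convexity of $t\mapsto\log|\hol(\delta)|$ along geodesics to control the level sets involved. The auxiliary facts — equivariance of $\rho$, nullity of the preimage of the vertex set, and additivity of area over the subdivision — are routine, and for edges that are infinite rays or full geodesics rather than compact segments one simply replaces the corresponding finite vertices by ideal ones, which only decreases the area.
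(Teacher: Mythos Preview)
Your approach is exactly the paper's: construct the retraction $\rho$, decompose $\HHH$ as the union of the sets $\rho^{-1}(\bar e)$, and show each such set is a pair of ideal triangles. The paper's proof is essentially your first two paragraphs.

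The step you flag as the ``main obstacle'' is in fact immediate, and the paper disposes of it in one line; no fallback is needed. For a fixed saddle connection $\xi$ on $M$ with $\hol(\xi)=v\in\R^2$, the length function $z\mapsto |v|_z$ on $\HHH$ has level sets which are horocycles, all based at the single boundary point $p_\xi\in\partial\HHH$ determined by the direction of $v$. The retraction rays in $S_\xi$ are precisely the geodesics orthogonal to these horocycles in the direction of decreasing $|v|_z$, so \emph{every} such ray, not just the two extreme ones, converges to $p_\xi$. In other words, the ideal endpoint depends only on the direction of $\hol(\xi)$ as a vector in the plane attached to the fixed reference surface $M$, not on the direction of $\xi$ ``at $p$'' for varying $p\in e$; once you see this, $S_\xi$ is visibly an ideal triangle with vertices $v_1,v_2,p_\xi$.
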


\begin{proof}
Write $\Gamma = \Gamma_M$. We define a retraction $\rho: \HHH/\Gamma \to
\Pi/\Gamma$. We will describe $\rho$ as a $\Gamma$-invariant map
from $\HHH$ to $\Pi$. Let $z \in \HHH$. It corresponds to a set of
isometric translation surfaces, i.e. to $\SO(2, \R) \cdot M'$ for some
$M'$ in the $G$-orbit of $M$. If $M'$ has minimal length saddle
connections in two or more non-parallel directions then $M'$
represents a point in $\Pi$ 
and we define $\rho(z)=z$. If all minimal length saddle connections on
$M$ are in the same direction, by rotating the coordinate system on
the surface we can assume 
that this direction is horizontal. Now apply the geodesic flow $g_t$
to the rotated surface. The length of the shortest saddle connection
$\sigma$ in the 
surface $g_t(M')$ is $e^{t/2}$ times the original length of
$\sigma$. As long as $g_t(M')$ is in the complement of $\Pi$, $\sigma$
remains a minimal length segment. Since the length of the shortest
curve in $g_t(M')$ is bounded above, for some $t_0$ we have $g_{t_0}(M')
\in \Pi$, and we define $\rho(z)$ to be the isometry class of $g_{t_0}(M')$. 

We now show that the set of all points which retract to a given
point corresponding to the surface $M_0$ in $\Pi$ is a union of geodesic
rays, one for each direction of a minimal length saddle connection
in $M_0$. Pick a 
saddle connection $\sigma$ of minimal length in $M_0$.  
By rotating the coordinate system assume that $\sigma$ is
horizontal. Consider the collection of surfaces that we get by flowing
by $g_{-t}$ for $t>0$. The matrix for $g_{-t}$ contracts the
horizontal direction more than any other direction so in each of these
surfaces $g_{-t}M_0$ the saddle connection $\sigma$ still has minimal length
and is shorter than all saddle connections in other directions. It
follows that for each of these surfaces the retraction takes $g_{-t}M_0$
to $M_0$. 

Pick an edge $e$ of $\Pi$. There are minimal length saddle
connections in two directions. Pick one direction and let $\sigma$ be
a saddle connection in this direction.  
The collection of surfaces which retract to $e$ and 
correspond to the direction of $\sigma$ in $\Pi$ is a union of
geodesic rays all of which are asymptotic to the same point on the
boundary of $\HHH$ (which corresponds to the direction of $\sigma$). It
follows that the set of points that retract to $e$ consists of two
geodesic triangles each with one vertex at infinity. 
Since the area of a geodesic triangle is bounded above by $\pi$ the
total area of $\HHH/\Gamma$ is bounded above by $2\pi N$.  
\end{proof}

\begin{remark}
The existence of the retraction $\rho: \HHH \to \Pi$ implies that
$\Pi$ is homotopy-equivalent to $\HHH$, i.e. a tree. 
\end{remark}

As we explained each edge $e$ in $\Pi$ is a geodesic segment
corresponding to the surfaces for which two saddle 
connections, say $\xi$ and $\eta$ have the same length. There is a
unique 
point on the geodesic containing $e$ corresponding to a surface $M_0$
where the 
saddle connections $\xi$ and $\eta$ have equal length and are perpendicular. It is
conceivable that $M_0$ is not in $e$, but nevertheless we have: 

\begin{lem}\name{lem: conceivable}
In the surface $M_0$ the common length of $\xi$ and $\eta$ is no more than
$\sqrt{2/\tau},$ where $\tau$ is as in \equ{eq: defn tau}. 
\end{lem}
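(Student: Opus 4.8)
The plan is to reduce the statement to Proposition \ref{prop: go bears} by comparing $M_0$ with the surfaces that actually lie on the edge $e$, rather than applying that proposition to $M_0$ directly (which would fail, since $\xi$ and $\eta$ need not be minimal-length saddle connections on $M_0$).

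First I would parametrize the geodesic $\gamma \subset \HHH$ that contains $e$. As in the discussion preceding the lemma, rotate the translation structure so that $\hol(\xi)+\hol(\eta)$ is horizontal and $\hol(\xi)-\hol(\eta)$ is vertical; then $\hol(\xi)=(x,y)$ and $\hol(\eta)=(x,-y)$ with $x,y\neq 0$ (if $x=0$ or $y=0$ then $\xi$ and $\eta$ would be parallel, contrary to hypothesis). Under the geodesic flow the squared length of $\xi$ becomes $e^{t}x^{2}+e^{-t}y^{2}$, a strictly convex function of $t$ minimized at the parameter $t_0$ with $e^{2t_0}=y^{2}/x^{2}$; and $g_{t_0}\hol(\xi)\cdot g_{t_0}\hol(\eta)=e^{t_0}x^{2}-e^{-t_0}y^{2}=0$, so $t_0$ is precisely the parameter at which $\xi$ and $\eta$ become perpendicular. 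Hence $M_0$ is the point of $\gamma$ minimizing the length of $\xi$ (equivalently of $\eta$, which has the same length), and the common length $\ell$ of $\xi$ and $\eta$ on $M_0$ equals $\min_{t\in\R}\sqrt{e^{t}x^{2}+e^{-t}y^{2}}$, the minimum of $\ell(\xi)$ over all of $\gamma$.

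Next I would pick any surface $M'$ in the (nonempty) edge $e\subset\gamma$. By definition of the spine, on $M'$ the saddle connections $\xi$ and $\eta$ realize the minimal length, so $\ell(\xi\text{ on }M')=\lambda(M')$. Since $M'$ lies in the $G$-orbit of $M$ it is affinely equivalent to $M$, so $\tau(M')=\tau(M)=\tau$, and Proposition \ref{prop: go bears} gives $\lambda(M')\le\sqrt{2/\tau}$. Combining, $\ell=\min_{\gamma}\ell(\xi)\le\ell(\xi\text{ on }M')=\lambda(M')\le\sqrt{2/\tau}$, as claimed. The only point that needs care is the elementary computation identifying the perpendicularity point $M_0$ with the minimizer of $\ell(\xi)$ along $\gamma$; after that the argument is immediate from Proposition \ref{prop: go bears} and the definition of the edges of $\Pi$, and this is not so much an obstacle as a bookkeeping step.
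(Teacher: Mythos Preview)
Your proof is correct and follows essentially the same route as the paper: pick a point $M'$ on the edge $e$, apply Proposition~\ref{prop: go bears} there, and then argue that the common length of $\xi$ and $\eta$ is minimized along the geodesic precisely at the perpendicularity point $M_0$. The only cosmetic difference is in that last step: you verify minimality by the explicit coordinate computation $\ell(\xi)^2=e^{t}x^{2}+e^{-t}y^{2}$, whereas the paper observes that $|\xi\wedge\eta|=\ell(\xi)\ell(\eta)\sin\theta=\ell(\xi)^{2}\sin\theta$ is $g_t$-invariant, so $\ell(\xi)$ is smallest when $\sin\theta=1$.
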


\begin{proof} Let $M'$ correspond to a point in
$e$. By Proposition \ref{prop: go bears}, in this surface the lengths
of $\xi$ and $\eta$ are bounded above by 
$\sqrt{2/\tau}$. Since the $g_t$-action preserves 
$|\xi \wedge \eta| = \ell(\xi) \ell(\eta) \sin \theta$, where $\theta$
is the angle between $\xi$ and $\eta$,  
the minimum of $\ell(\xi)$  is
attained when $\xi$ and $\eta$ are perpendicular. Thus the lengths at
$M_0$ are no greater than
the lengths at $M'$, and we can use Proposition \ref{prop: go bears}. 
\end{proof}

We say that a translation surface $M$ is in {\em standard form} if it has
vertical and horizontal saddle connections, and if the shortest
horizontal and vertical saddle connections have the same length and
this length is bounded by $\sqrt{2/\tau(M)}$. 

\begin{cor}\name{cor: standard}
The area of $\HHH/\Gamma_M$ is at most $2\pi E$ where $E$ is the
number of standard form surfaces affinely equivalent to $M$. 
\end{cor}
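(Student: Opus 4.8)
The plan is to deduce this from Proposition \ref{prop: area est} together with the inequality $N\le E$, where $N$ is the number of edges of $\Pi/\Gamma_M$. To get $N\le E$ I will attach to each edge of $\Pi/\Gamma_M$ a standard form surface affinely equivalent to $M$, in a way that is injective; the main input is Lemma \ref{lem: conceivable}.

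So fix an edge $e$ of $\Pi$. It lies on a geodesic line $\ell_e\subset\HHH$ which, as explained before Lemma \ref{lem: conceivable}, consists exactly of those surfaces (in the $G$-orbit of $M$) on which two fixed non-parallel saddle connections $\xi,\eta$ have equal length. By Lemma \ref{lem: conceivable} there is a point $M_0=M_0(e)$ on $\ell_e$ at which $\xi,\eta$ are perpendicular, with common length $\ell(\xi)=\ell(\eta)\le\sqrt{2/\tau}$; rotate coordinates so that $\xi$ is horizontal and $\eta$ vertical. I claim $M_0(e)$ is in standard form. The length bound is Lemma \ref{lem: conceivable}, and $\ell(\xi)=\ell(\eta)$ holds all along $\ell_e$, so the point to check is that $\xi$ (resp. $\eta$) is the shortest horizontal (resp. vertical) saddle connection on $M_0(e)$. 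For this I would use that $\hol(\cdot)\wedge\hol(\cdot)$ is constant along $\ell_e$ — the fact already invoked in the proof of Lemma \ref{lem: conceivable}, valid because $\ell_e$ is a $g_t$-orbit in suitable coordinates. If $\zeta$ is a saddle connection horizontal on $M_0(e)$ with $\ell(\zeta)<\ell(\xi)$, then $\hol(\zeta)\wedge\hol(\xi)\equiv 0$ along $\ell_e$, so $\hol(\zeta)=c\,\hol(\xi)$ with $c$ a constant and $|c|<1$; at a surface of $e$, where $\xi$ realizes the global minimum of saddle connection lengths, this would give a strictly shorter saddle connection (at least once $\zeta$ is known to persist as a saddle connection there), a contradiction.

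The construction of $M_0(e)$ from $e$ is canonical — the geodesic $\ell_e$, the perpendicularity point, and the property of being shortest in a given direction are all preserved by affine automorphisms — so $e\mapsto M_0(e)$ is $\Gamma_M$-equivariant and descends to a map from the edges of $\Pi/\Gamma_M$ to translation equivalence classes of standard form surfaces affinely equivalent to $M$. For injectivity, note that a standard form surface carries a distinguished pair of directions (horizontal and vertical); choosing a shortest horizontal saddle connection $\xi$ and a shortest vertical one $\eta$ recovers $\ell_e$ as the locus where $\hol(\xi),\hol(\eta)$ have equal length, and then $e$ as the sub-segment of $\ell_e$ on which $\xi,\eta$ realize the global minimum over all saddle connections. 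Hence edges with $\Gamma_M$-equivalent images are $\Gamma_M$-equivalent, which yields $N\le E$, and combined with Proposition \ref{prop: area est} gives that the area of $\HHH/\Gamma_M$ is at most $2\pi E$.

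The step I expect to be the main obstacle is verifying that $M_0(e)$ is genuinely in standard form, i.e. that $\xi$ and $\eta$ remain shortest in their directions at $M_0(e)$. The difficulty, flagged already before Lemma \ref{lem: conceivable}, is that $M_0(e)$ need not lie on $e$ itself, so a horizontal saddle connection $\zeta$ on $M_0(e)$ need not be realized as a saddle connection on the surfaces of $e$ — only its holonomy vector is defined there — and the quick contradiction above must be supplemented by controlling how the set of realized saddle connections in the holonomy-direction of $\xi$ varies along $\ell_e$, showing that the length of the shortest such saddle connection is minimized at $M_0(e)$, where it equals $\ell(\xi)$. A minor additional point is keeping track of the finite rotational ambiguity (whether $\xi$ or $\eta$ is placed horizontal) when passing to translation equivalence classes.
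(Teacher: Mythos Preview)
Your approach is correct and is exactly what the paper intends (the paper marks the corollary with a bare \qed, regarding it as immediate from Proposition \ref{prop: area est} and Lemma \ref{lem: conceivable}). The worry you flag at the end is not a real obstacle: the $G$-action on a stratum is simply post-composition of charts with a linear map, so it does not change the underlying surface, the singular set $\Sigma$, or the property of being a straight segment with no singularities in its interior. Hence a saddle connection $\zeta$ on $M_0(e)$ is automatically a saddle connection on every surface along $\ell_e$, in particular on the surfaces of $e$; your parenthetical ``at least once $\zeta$ is known to persist'' is therefore satisfied for free, and the contradiction you set up goes through verbatim. Since $\zeta\parallel\xi$ implies the ratio $\ell(\zeta)/\ell(\xi)$ is constant along $\ell_e$, and $\xi$ is globally shortest on $e$, you get $\ell(\zeta)\ge\ell(\xi)$ at $M_0(e)$ as well. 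Your injectivity argument is also fine: the interval on $\ell_e$ where the two fixed directions are globally minimal is connected (each competing saddle connection cuts out a half-line or all of $\ell_e$), so there is a unique edge of $\Pi$ on $\ell_e$ with those two minimal directions.
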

~\qed
\section{The complexity of pairs of cylinder decompositions}

A pair of cylinder decompositions in distinct directions determine a
decomposition of the surface into parallelograms. We measure the
complexity of the pair by counting the number of parallelograms in
this decomposition. We will show that a lower bound on the area of
virtual triangles allows us to find a pair of cylinder decompositions
for which we can bound the complexity.

\begin{prop}
\name{prop: rectangle number bound}
Suppose  $M \in \NSVT(\beta_0)$ and $\xi$ and $\eta$ are saddle
connections on $M$ with $|\hol(\xi)\wedge\hol(\eta)|=\beta \ge \beta_0$. 
Then the number of rectangles in the pair of cylinder
decompositions determined by $\xi$ and $\eta$ is no more than
$\beta/\beta_0^2$. 
\end{prop}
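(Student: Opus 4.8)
The plan is to normalize the configuration by the $G$-action and then show that every rectangle of the common refinement has area at least $\beta_0^2/\beta$; since $M$ has area $1$, this forces at most $\beta/\beta_0^2$ rectangles.

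First I would apply an element of $G$ so that $\xi$ is horizontal and $\eta$ is vertical. This is harmless: $G=\SL(2,\R)$ preserves areas, preserves $|\hol(\xi)\wedge\hol(\eta)|=\beta$, and preserves membership in $\NSVT(\beta_0)$ (since $|gu\wedge gv|=|u\wedge v|$ for $g\in G$). Since $\NSVT(\beta_0)\subset\NST(2\beta_0)$ by Theorem \ref{thm: nst vs nsvt}, Proposition \ref{prop: Vorobets parabolic} applies, so both the horizontal and the vertical directions are periodic; let $\{C_i\}$ be the horizontal cylinder decomposition and $\{D_j\}$ the vertical one. Their boundary saddle connections cut $M$ into the rectangles in question, and each rectangle $R$ lies in a unique horizontal cylinder $C$ and a unique vertical cylinder $D$. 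Because $\LL(\text{horizontal})$ is precisely the union of the boundaries of the horizontal cylinders (and likewise in the vertical direction), the top and bottom sides of $R$ lie on $\partial C$ and the left and right sides lie on $\partial D$; hence $R$ has height equal to the height $h$ of $C$ and width equal to the width $w$ of $D$, so $\mathrm{area}(R)=hw$.

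The core estimate is a lower bound on $h$ and $w$. Since $C$ is a maximal horizontal cylinder, both its boundary components contain singularities, and joining a singularity on one component to the nearest developed image of a singularity on the other by a straight segment inside $\bar C$ produces a saddle connection $\delta_C$ crossing $C$, with $\hol(\delta_C)=(a,h)$ for some $a$. As $h\neq 0$, the vectors $\hol(\xi)=(x,0)$ and $\hol(\delta_C)$ are non-parallel, so $M\in\NSVT(\beta_0)$ gives $xh=|\hol(\xi)\wedge\hol(\delta_C)|\ge\beta_0$, where $x=\ell(\xi)$; that is, $h\ge\beta_0/x$. Symmetrically a saddle connection $\delta_D$ crossing $D$ has $\hol(\delta_D)=(w,b)$, is non-parallel to $\hol(\eta)=(0,y)$ with $y=\ell(\eta)$, and yields $w\ge\beta_0/y$. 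Therefore $\mathrm{area}(R)=hw\ge\beta_0^2/(xy)=\beta_0^2/\beta$, and since the rectangles are disjoint with total area $1$, there are at most $\beta/\beta_0^2$ of them.

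I expect the main obstacle to be the two elementary but non-automatic flat-geometry facts: that each cell of the common refinement has dimensions (height of its horizontal cylinder) $\times$ (width of its vertical cylinder), and that a maximal cylinder always contains in its closure a saddle connection crossing from one boundary component to the other with transverse holonomy equal to its height. The first follows from the characterization of $\LL(\theta)$ as the union of cylinder boundaries in direction $\theta$; the second follows by developing the closed cylinder into a Euclidean strip and taking the straight segment between developed images of boundary singularities, which meets the boundary only at its endpoints and so encounters no singularity in its interior. Both should be written out carefully so that the identity $\mathrm{area}(R)=hw$ and the two wedge-product inequalities are fully justified.
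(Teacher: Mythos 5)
Your proof is correct and follows essentially the same route as the paper: normalize so that $\xi$ is horizontal and $\eta$ is vertical, use a saddle connection crossing each cylinder to get $h\ge\beta_0/\ell(\xi)$ and $w\ge\beta_0/\ell(\eta)$ from the $\NSVT(\beta_0)$ condition, and conclude that every rectangle has area at least $\beta_0^2/\beta$. The paper additionally normalizes $\xi$ and $\eta$ to equal length $\sqrt{\beta}$, which your version shows is unnecessary; otherwise the argument and the supporting flat-geometry facts you flag are exactly as in the paper.
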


\begin{proof} By changing coordinates using an element of $G$ we may assume
that $\xi$ is horizontal and $\eta$ is vertical, and they have the
same length. The product of the
lengths is $\beta$ so both lengths are 
$\sqrt{\beta}$. Let $h$ be the height of a horizontal cylinder. There
are singularities on the top and bottom of this cylinder and a saddle
connection $\sigma$ between them. The quantity
$|\hol(\sigma)\wedge\hol(\xi)|$ is just $h\sqrt{\beta}$. By
assumption $|\hol(\sigma)\wedge\hol(\xi)|\ge \beta_0$ 
so $h\ge\beta_0/\sqrt{\beta}$. A similar
calculation for the width of the vertical cylinders gives
$w\ge\beta_0/\sqrt{\beta}$. It follows that the area of any rectangle
is at least $\beta_0^2/\beta$. Since the total area of $M$ is 1, the
number of cylinders is at most  $\beta/\beta_0^2$.  
\end{proof}

\begin{cor} 
\name{cor: stand form equiv class} A surface in $\NSVT(\beta)$ is
affinely equivalent to a
standard form surface with at most $1/\beta$ rectangles. 
\end{cor}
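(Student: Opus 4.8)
The plan is to apply Proposition \ref{prop: rectangle number bound} to a carefully chosen surface in the affine equivalence class of $M$. Set
\[
\beta^* \df \inf \left\{ |v_1 \wedge v_2| : v_i \in \hol(M),\ v_1 \wedge v_2 \neq 0 \right\},
\]
so that $\beta^* \geq \beta$ because $M \in \NSVT(\beta)$. I would first show this infimum is attained, say by a pair of non-parallel saddle connections $\xi, \eta$ with $|\hol(\xi) \wedge \hol(\eta)| = \beta^*$. Granting this, $\xi$ is automatically the shortest saddle connection in its direction, for replacing it by a strictly shorter parallel one would scale $|\hol(\xi) \wedge \hol(\eta)|$ by a factor in $(0,1)$, contradicting minimality; likewise $\eta$ is shortest in its direction. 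Now pick $g \in G$ carrying the direction of $\xi$ to horizontal and that of $\eta$ to vertical, normalized so that $g\,\hol(\xi)$ and $g\,\hol(\eta)$ each have length $\sqrt{\beta^*}$; this is possible because $G$ acts by area-preserving maps, so $|g\,\hol(\xi) \wedge g\,\hol(\eta)| = \beta^*$. Put $M_0 \df gM$, which is affinely equivalent to $M$ and carries $\xi$ horizontal and $\eta$ vertical, each of length $\sqrt{\beta^*}$.

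Next I would verify that $M_0$ is in standard form. By the previous paragraph $\xi$ is the shortest horizontal saddle connection on $M_0$ and $\eta$ the shortest vertical one, each of length $\sqrt{\beta^*}$, so it remains to check that $\sqrt{\beta^*} \leq \sqrt{2/\tau(M)}$. For this I claim $\lambda(M_0) = \sqrt{\beta^*}$: if $\zeta$ were a saddle connection on $M_0$ with $\ell(\zeta) < \sqrt{\beta^*}$, then $\zeta$ is not horizontal (it would be shorter than $\xi$), so $\hol(\zeta) = (\zeta_1,\zeta_2)$ is not parallel to $\hol(\xi) = (\sqrt{\beta^*},0)$ and hence $|\zeta_2|\sqrt{\beta^*} = |\hol(\zeta) \wedge \hol(\xi)| \geq \beta^*$; but then $\ell(\zeta) \geq |\zeta_2| \geq \sqrt{\beta^*}$, a contradiction. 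Since $\tau(M_0) = \tau(M)$, Proposition \ref{prop: go bears} then gives $\sqrt{\beta^*} = \lambda(M_0) \leq \sqrt{2/\tau(M)}$, so $M_0$ is in standard form. Finally $M_0 \in \NSVT(\beta^*)$ (the set of wedge products is unchanged under $G$), and $M_0$ has no small triangles --- by Theorem \ref{thm: nst vs nsvt}, or directly, since subdividing two sides of a triangle into saddle connections displays its area as at least $\beta^*/2$ --- so by Proposition \ref{prop: Vorobets parabolic} the horizontal and vertical directions on $M_0$ are periodic and the cylinder decompositions they determine exist. Applying Proposition \ref{prop: rectangle number bound} to $M_0 \in \NSVT(\beta^*)$ with the pair $\xi, \eta$, for which $|\hol(\xi) \wedge \hol(\eta)| = \beta^*$, bounds the number of rectangles by $\beta^*/(\beta^*)^2 = 1/\beta^* \leq 1/\beta$.

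The step I expect to be the main obstacle is the first one, attainment of the infimum $\beta^*$: a set of positive reals bounded away from zero need not contain its infimum, so this must invoke the no-small-virtual-triangles hypothesis in an essential way. One route is compactness. Take non-parallel pairs $(\xi_n, \eta_n)$, which we may assume shortest in their directions, with $|\hol(\xi_n) \wedge \hol(\eta_n)| \to \beta^*$, and normalize them to surfaces $N_n$ affinely equivalent to $M$ as above. The estimate of the second paragraph shows $\lambda(N_n)$ eventually stays between $\sqrt{\beta^*/2}$ and $\sqrt{2\beta^*}$, so the $N_n$ remain in a fixed compact subset of their stratum and a subsequence converges, with $\beta^*$ realized on the limit; the delicate point is to transport the realizing configuration back into the affine class of $M$, since a priori the $G$-orbit of $M$ need not be closed. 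Alternatively one could try a reduction modeled on Vorobets' proof of Proposition \ref{prop: Vorobets parabolic}: when $(\xi, \eta)$ is such a pair with very small wedge product, $\xi$ and $\eta$ are nearly parallel and a singularity in the long thin region they bound should yield a shorter saddle connection without increasing the wedge product; since lengths then strictly decrease but stay above $\lambda(M)$, while saddle connection lengths form a discrete set, only finitely many extremal pairs survive and the minimum is attained over them.
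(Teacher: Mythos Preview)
Your approach is exactly the paper's: the entire proof there reads ``This follows from Proposition~\ref{prop: rectangle number bound} with $\beta = \beta_0$,'' which is precisely your final step of applying that proposition to a pair realizing the minimal wedge. All of your additional work---showing that the minimizing $\xi,\eta$ are shortest in their directions, that $\lambda(M_0)=\sqrt{\beta^*}$, and hence via Proposition~\ref{prop: go bears} that $M_0$ is in standard form---fills in details the paper leaves implicit. So the route is the same; you have simply been more careful.

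You are right that attainment of $\beta^*$ is the one real issue, and the paper is silent on it too. Your compactness sketch has the gap you name (orbit closure is exactly what the paper is ultimately proving), and your second idea is closer but not finished. Here is a clean, non-circular fix using only results that precede or are independent of this corollary. Take pairs $(\xi_n,\eta_n)$, shortest in their directions, with wedges $\beta_n \searrow \beta^*$; for large $n$ one has $\beta_n < 2\beta^*$. Since $\NSVT(\beta^*)\subset \NST(\beta^*/2)$ (each side of a triangle contains a saddle connection from the vertex, so its area is at least half a nonzero wedge), Proposition~\ref{prop: Vorobets parabolic} makes both directions parabolic. Proposition~\ref{prop: rectangle number bound} with $\beta_0=\beta^*$ bounds the number of rectangles by $\beta_n/(\beta^*)^2 < 2/\beta^*$, and Proposition~\ref{prop: twist bounds beta} bounds the Dehn twist vectors. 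Hence the combinatorial data $(\sigma_1,\sigma_2,\vec n_1,\vec n_2)$ attached to $(\xi_n,\eta_n)$ ranges over a finite set, and by Proposition~\ref{prop: parameters determine surface} this data determines the normalized surface---in particular the common length $\sqrt{\beta_n}$ of the shortest horizontal and vertical saddle connections. Thus $\{\beta_n\}$ is finite, so $\beta^*$ is attained. None of these propositions depends on the corollary, so there is no circularity.
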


\begin{proof} 
This follows from Proposition \ref{prop: rectangle number bound} with
$\beta = \beta_0$.  
\end{proof}

\begin{cor} 
\name{cor: stand form area est}
A standard form surface in $\NSVT(\beta)$ has a decomposition into at
most  $2/(\tau \beta^2)$ rectangles. 
\end{cor}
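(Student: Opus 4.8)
The plan is to combine the two preceding corollaries with the area bound of Proposition~\ref{prop: go bears} applied to a standard form surface. Recall that Corollary~\ref{cor: stand form equiv class} already gives a decomposition into at most $1/\beta$ rectangles for a standard form surface $M \in \NSVT(\beta)$; the present statement is a sharper count of $2/(\tau\beta^2)$ rectangles, which is an improvement precisely when $\tau\beta$ is large. So I would not simply invoke Corollary~\ref{cor: stand form equiv class}, but rather re-run the area estimate of Proposition~\ref{prop: rectangle number bound} using the extra information that $M$ is in standard form.

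First I would set up coordinates: since $M$ is a standard form surface in $\NSVT(\beta)$, it has horizontal and vertical saddle connections whose shortest representatives $\xi$ (horizontal) and $\eta$ (vertical) have a common length $\ell$ with $\ell \le \sqrt{2/\tau}$, by the definition of standard form. Thus $|\hol(\xi)\wedge\hol(\eta)| = \ell^2 \le 2/\tau$. Next, exactly as in the proof of Proposition~\ref{prop: rectangle number bound}, for any horizontal cylinder of height $h$ there is a saddle connection $\sigma$ crossing it between singularities on its two boundary components, and $|\hol(\sigma)\wedge\hol(\xi)| = h\ell \ge \beta$ since $M \in \NSVT(\beta)$; hence $h \ge \beta/\ell$. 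The same argument applied to the vertical decomposition gives that every vertical cylinder has width $w \ge \beta/\ell$. Therefore each rectangle in the common refinement of the two cylinder decompositions has area at least $(\beta/\ell)^2 = \beta^2/\ell^2 \ge \beta^2 \tau / 2$.

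Since $M$ has total area $1$ and the rectangles tile $M$, the number of rectangles is at most $2/(\tau\beta^2)$, which is the assertion. I do not expect any genuine obstacle here: the only point requiring a little care is making sure the relevant saddle connections exist and the length bound $\ell \le \sqrt{2/\tau}$ is available — both are guaranteed by the definition of standard form together with Proposition~\ref{prop: go bears} (via Lemma~\ref{lem: conceivable}), so the argument is essentially a bookkeeping refinement of Proposition~\ref{prop: rectangle number bound} in which the crude bound ``lengths are $\sqrt{\beta}$'' is replaced by the sharper ``lengths are $\ell \le \sqrt{2/\tau}$.'' The whole proof is two or three lines once the coordinates are fixed.
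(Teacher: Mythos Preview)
Your proposal is correct and is essentially the paper's argument: the paper simply applies Proposition~\ref{prop: rectangle number bound} with $\beta_0=\beta$ and $|\hol(\xi)\wedge\hol(\eta)|=\ell^2\le 2/\tau$ to get the bound $2/(\tau\beta^2)$, whereas you inline that proposition's proof. Note that the length bound $\ell\le\sqrt{2/\tau}$ is part of the \emph{definition} of standard form, so you need not appeal to Proposition~\ref{prop: go bears} or Lemma~\ref{lem: conceivable} separately.
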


\begin{proof} Let $\xi$ and $\eta$ be horizontal and vertical saddle
connections of common length at most $\sqrt{2/\tau}. $ Then 
$|\hol(\xi)\wedge\hol(\eta)| \le 2/\tau$. By Proposition \ref{prop:
rectangle number bound}, the 
number of rectangles in the corresponding pair of cylinders
decomposition is bounded above by $2/(\tau\beta^2)$. 
\end{proof}



\section{Pairs of cylinder decompositions}
\name{section: pairs of parabolics}
In this section we review ideas of Thurston \cite{Thurston} and Veech
\cite{Veech - alternative} that make it possible to bound the number
of standard form surfaces in $\NSVT(\beta).$ To this end assume $M$ is
a flat surface for which the horizontal and vertical directions
are periodic. 
The intersections of 
the horizontal and vertical cylinders divide the surface into
rectangles. Let us label the rectangles with numbers $1 , \ldots,
\ell$ and call the resulting surface a {\em labeled surface}. Let
$S_\ell$ denote the permutation group on $\ell$ 
symbols. Assume first that $M$ is a translation surface, so that each
rectangle has a well defined top, bottom, left and right 
edge. For $k \in \{1, \ldots, \ell\}$ let $\sigma_1(k)=k'$ if the
$k'$th rectangle is attached to the right side of the $k$th
rectangle. Let $\sigma_2(k)=k'$ if the $k'$th rectangle is attached to
the top side of the $k$th rectangle. As in \cite{EO1} we observe that
connectedness of $M$ is equivalent to the condition that group generated by
$\sigma_1$ and $\sigma_2$ acts transitively on $\{1, \ldots, \ell\}$.
The collection of horizontal cylinders corresponds to the set of
cycles of $\sigma_1$ and the collection of vertical cylinders
corresponds to the set of cycles of $\sigma_2$. The orders of the
singular points of $M$ is determined by the cycle structure of the
commutator of $\sigma_1$ and $\sigma_2$. 
If $M$ is a half-translation surface, a gluing pattern can be defined
in a similar way. For example this can be defined using the
orientation double cover. We will omit the details. 

We can also reverse this process. Say that $\sigma_1$ and
$\sigma_2$ are elements of $S_\ell$, for which 
the group
generated by $\sigma_1$ and $\sigma_2$ acts transitively on $\{1,
\ldots, \ell\}$. Let $\vec{a}^{(1)},
\vec{a}^{(2)}$ be vectors with positive real entries, indexed by the
cycles of $\sigma_1, \sigma_2$. 
Then there is a labeled translation surface $M$ where the heights of
the horizontal (vertical) cylinders are recorded in $\vec{a}^{(1)}$
(resp. $\vec{a}^{(2)}$); indeed, let $R_1, \ldots, R_\ell$ be
(labeled) rectangles where, if $k$ belongs to the $i$th (resp. $j$th)
cycle of $\sigma_1$ ($\sigma_2$) then the height (width) of
$R_k$ is $a^{(1)}_i$ (resp $a^{(2)}_j$). We see that this
surface is determined uniquely 
as a labeled surface. 

We say that a labeled surface is {\em normalized} if the horizontal
and vertical directions are parabolic, and the shortest saddle
connections in the horizontal and vertical direction have equal
length. Associated with such a surface is a pair of Dehn twist 
vectors corresponding to the horizontal and vertical cylinder
decompositions. We have:

\begin{prop}[Veech] \name{prop: parameters determine surface}
The pair of permutations $(\sigma_1, \sigma_2)$ and the Dehn twist
vectors uniquely determine $M$ as a normalized labeled translation surface. 
\end{prop}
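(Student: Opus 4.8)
The plan is to reduce the statement to a uniqueness assertion about cylinder heights and widths, and then to extract those from the combinatorial data by a Perron--Frobenius argument. Recall from the construction preceding the proposition that a labeled translation surface with gluing data $(\sigma_1,\sigma_2)$ is completely determined by the vector $A=\vec{a}^{(1)}\in\R^r_{>0}$ recording the heights of its horizontal cylinders (the cycles of $\sigma_1$) and the vector $B=\vec{a}^{(2)}\in\R^s_{>0}$ recording the widths of its vertical cylinders (the cycles of $\sigma_2$). So it suffices to show that $(\sigma_1,\sigma_2)$, the two Dehn twist vectors, and the normalization condition determine $A$ and $B$. First I would introduce the nonnegative $r\times s$ incidence matrix $P$, where $P_{ij}$ is the number of rectangles lying in both the $i$th horizontal and the $j$th vertical cylinder; this depends only on $(\sigma_1,\sigma_2)$. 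Then the circumference of the $i$th horizontal cylinder is $(PB)_i$ and its height is $A_i$, while the circumference of the $j$th vertical cylinder is $(P^{T}A)_j$ and its width is $B_j$.

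Next I would translate the hypotheses into linear algebra. Since the horizontal direction is parabolic, the inverse moduli $(PB)_i/A_i$ of the horizontal cylinders are commensurable, and if $\vec{n}=(n_1,\dots,n_r)$ denotes the horizontal Dehn twist vector then $n_i(PB)_i/A_i$ is independent of $i$; writing $D_n=\diag(n_1,\dots,n_r)$ this says $D_nPB=\mu A$ for some $\mu>0$. Likewise $D_mP^{T}A=\nu B$ for some $\nu>0$, where $D_m=\diag(m_1,\dots,m_s)$ and $\vec{m}$ is the vertical Dehn twist vector. Eliminating $B$ gives $QA=\mu\nu A$, where $Q\df D_nPD_mP^{T}$ is a nonnegative $r\times r$ matrix depending only on $(\sigma_1,\sigma_2)$ and the two Dehn twist vectors. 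I expect the main obstacle to be the next step: showing that this eigenvector equation pins $A$ down up to scale. For this I would use that connectedness of $M$ is equivalent to transitivity of $\langle\sigma_1,\sigma_2\rangle$, hence to connectedness of the bipartite graph on horizontal and vertical cylinders joined by shared rectangles; combined with the fact that every horizontal cylinder meets at least one vertical cylinder, this makes $Q$ irreducible with positive diagonal. By the Perron--Frobenius theorem $Q$ then has a one-dimensional cone of positive eigenvectors, with eigenvalue its Perron eigenvalue, so $A$ is determined up to a positive scalar, and $B=\nu^{-1}D_mP^{T}A$ is then determined by $A$ and $\nu$.

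Letting $\mu,\nu$ vary over the positive reals, this shows that the pair $(A,B)$ is determined up to the independent rescalings $A\mapsto tA$, $B\mapsto sB$ with $s,t>0$ arbitrary --- equivalently, up to the action on $M$ of the diagonal subgroup $\diag(s,t)\subset\GL(2,\R)$, which fixes the horizontal and vertical directions and leaves both Dehn twist vectors unchanged. To finish, I would use the normalization to remove this two-parameter freedom. Under $A\mapsto tA$, $B\mapsto sB$ every horizontal length scales by $s$ and every vertical length by $t$; in particular the length of the shortest horizontal saddle connection scales by $s$, that of the shortest vertical one by $t$, and the total area by $st$. The normalization forces these two shortest lengths to be equal, and (since our surfaces have unit area) $st$ times the area at the base point equals $1$; these two equations have a unique solution $(s,t)\in\R^2_{>0}$, so $M$ is uniquely determined. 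The only genuinely non-formal ingredient is the Perron--Frobenius step via connectedness; once irreducibility of $Q$ is in hand the remainder is bookkeeping.
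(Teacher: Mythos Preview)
Your proof is correct and follows essentially the same approach as the paper: the incidence matrix $P$ is the paper's $B$, your $D_n,D_m$ are the paper's $A_1,A_2$, and your eigenvector equation $QA=\mu\nu A$ with $Q=D_nPD_mP^T$ is exactly the paper's $E_1=A_1BA_2B^t$, with Perron--Frobenius (via transitivity of $\langle\sigma_1,\sigma_2\rangle$) giving uniqueness up to scale and the normalization fixing the two remaining parameters. The only cosmetic differences are that you add the observation about positive diagonal (not needed, since irreducibility alone suffices) and phrase the final step as solving two equations in $(s,t)$ rather than sequentially fixing a ratio and then a common scale.
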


\begin{proof}
By the preceding discussion it suffices to show that the permutations
and Dehn twist vectors uniquely determine the vectors
$\vec{a}^{(d)}, \ d=1,2.$ The Dehn twist vectors prescribe the ratios
of inverse moduli of cylinders in a labeled surface; we will show that
determining these ratios uniquely determines $\vec{a}^{(d)}$ up to two
free variables. The assumptions that $M$ is normalized and has area 1
will be used to get a unique solution. 

Let $B = (b_{ij})$ be an integer matrix where $i$ ranges over the cycles
of $\sigma_1$ and $j$ over the cycles of $\sigma_2$, and $b_{ij}$ is
the number of elements in the intersection of the corresponding cycles.
Define 
$$\vec{c}^{(1)} = B \vec{a}^{(2)} \ \ \ \mathrm{and \ }  \ \vec{c}^{(2)} =B^t
\vec{a}^{(1)}.$$
Note that on a labeled surface, the circumference of the $j$th horizontal
cylinder is the sum of 
the widths of the rectangles contained in it. This is just $\sum
b_{ij}a^{(2)}_j$. Thus, if $\vec{a}^{(1)}$ contains the heights of horizontal cylinders
on a labeled surface, then $\vec{c}^{(2)}$ contains the circumferences of
vertical cylinders and vice versa. 

Now let $A_d = 
\diag(\vec{n}^{(d)})$, i.e. an integer diagonal matrix, indexed by
cycles of $\sigma_d$, containing the Dehn twist data. 
In order for the surface determined by $\vec{a}^{(1)}, \vec{a}^{(2)}$
to be parabolic in the horizontal and vertical directions, with the
prescribed Dehn twist vectors, the inverse
modulus $\mu^{(d)}_i$ of the $i$th cylinder must satisfy 
$n^{(d)}_i\mu^{(d)}_i=\mu^{(d)}$. Here 
$\mu^{(1)}$ and $\mu^{(2)}$ are variables.
The relationship between inverse moduli and Dehn twist
numbers gives:
$$\mu^{(d)} \vec{a}^{(d)}=  A_d \vec{c}^{(d)}.$$
Putting these matrix equations together gives 
$$\mu^{(1)} \mu^{(2)} \vec{a}^{(1)} = A_1B A_2 B^t \vec{a}^{(1)},
$$
so $\vec{a}^{(1)}$ is an eigenvector of $E_1= A_1BA_2B^t$ corresponding
to the eigenvalue $\mu^{(1)} \mu^{(2)}$. The
assumption that $\langle \sigma_1, \sigma_2 \rangle$ acts
transitively on $\{1,\ldots,\ell\}$ implies that $E_1$ is 
an irreducible non-negative matrix. By the uniqueness of a positive
eigenvector for an irreducible non-negative matrix \cite{Gantmacher},
$\vec{a}^{(1)}$ is determined, up to a positive scalar, by the matrix
$E_1$. Similarly $\vec{a}^{(2)}$ is uniquely 
determined up to scaling by $E_2 = A_2B^tA_1B$.  

 The shortest horizontal
(vertical) 
saddle connection is a sum of some of the entries in $\vec{a}^{(2)}$
(resp. $\vec{a}^{(1)}$). Requiring these sums to be equal implies that
the scaling of $\vec{a}^{(1)}$ determines the scaling of
$\vec{a}^{(2)}$. Rescaling both vectors by the same factor
$\gamma$ changes the area of the surface
by $\gamma^2$. There is a unique positive value of $\gamma$ which produces a
surface of area 1. 
\end{proof} 


\begin{cor}\name{cor: viii}
If $M \in \NSVT(\beta)$ then condition (viii) of Theorem \ref{thm:
characterization} holds. 

\end{cor}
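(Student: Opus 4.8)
The plan is to reduce the statement to one finiteness fact about non-parallel pairs and then bootstrap from it. First I would record that $M\in\NSVT(\beta)\subset\NST(2\beta)$ by Theorem~\ref{thm: nst vs nsvt}, so Proposition~\ref{prop: Vorobets parabolic} makes every saddle-connection direction on $M$ parabolic, hence periodic; consequently each $\LL_M(\theta)$ is finite and $M$ admits a cylinder decomposition in every such direction $\theta$. I will treat translation surfaces, the half-translation case being identical after passing to the orientation double cover.

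The core step will be the sub-claim: \emph{for every $T_0>0$, the non-parallel pairs $(\xi,\eta)\in\LL_M\times\LL_M$ with $|\hol(\xi)\wedge\hol(\eta)|<T_0$ fall into finitely many $\Aff(M)$-orbits.} To prove it I would fix such a pair, put $b=|\hol(\xi)\wedge\hol(\eta)|$ (so $\beta\le b<T_0$ by the $\NSVT$ hypothesis), and apply the element $g=g_{\xi,\eta}\in G$ that carries the direction of $\xi$ to horizontal and that of $\eta$ to vertical, followed by the unique diagonal element making the shortest horizontal and shortest vertical saddle connections equal in length; this $g$ is well defined up to translation equivalence of the result. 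Labeling the rectangles of $M_1=gM$ yields a normalized labeled translation surface affinely equivalent to $M$. Proposition~\ref{prop: rectangle number bound} bounds the number $\ell$ of rectangles of $M_1$ by $b/\beta^2<T_0/\beta^2$, independently of the pair, so the permutation pair $(\sigma_1,\sigma_2)$ ranges over a finite subset of $S_\ell\times S_\ell$; since each of the two cylinder decompositions has at most $\ell$ cylinders, Proposition~\ref{prop:diagonal count} confines the associated Dehn twist vectors to a finite set as well. By Proposition~\ref{prop: parameters determine surface} these data determine $M_1$ up to translation equivalence, so only finitely many $M_1$ occur, and on each such $M_1$ there are only finitely many horizontal and finitely many vertical saddle connections (they all lie on the finitely many cylinder boundaries). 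Hence there are only finitely many translation-equivalence classes of marked triples $(M_1,g\xi,g\eta)$; and if $(\xi,\eta)$ and $(\xi',\eta')$ give translation-equivalent marked triples via some $\phi\colon M_1\to M_1'$, then $g_{\xi',\eta'}^{-1}\circ\phi\circ g_{\xi,\eta}$ is an element of $\Aff(M)$ carrying one pair to the other. This proves the sub-claim.

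To finish I would observe that any saddle connection $\xi$ lies on a boundary component of some cylinder $C$ in its (periodic) direction, and $C$ contains a saddle connection $\zeta$ running between its two boundary components; taking the direction of $\xi$ horizontal, $|\hol(\xi)\wedge\hol(\zeta)|=\ell(\xi)\,h_C\le c_C\,h_C=\mathrm{area}(C)\le 1$, since $\ell(\xi)\le c_C$, where $c_C,h_C$ are the circumference and height of $C$. The sub-claim with $T_0=2$ then forces the pairs $(\xi,\zeta)$, and hence all saddle connections, into finitely many $\Aff(M)$-orbits; fix representatives $\xi_1,\dots,\xi_n$ lying in directions $\theta_1,\dots,\theta_n$. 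For a parallel pair $(\xi,\eta)$ — precisely the pairs with $|\hol(\xi)\wedge\hol(\eta)|=0$, the ones the sub-claim does not cover — an element of $\Aff(M)$ moves $\xi$ to some $\xi_i$, after which $\eta\in\LL_M(\theta_i)$, a finite set, so parallel pairs occupy at most $\sum_i|\LL_M(\theta_i)|$ orbits. Combining this with the sub-claim applied with $T_0=T$ to the non-parallel pairs gives, for every $T>0$, that the pairs with $|\hol(\xi)\wedge\hol(\eta)|<T$ form finitely many $\Aff(M)$-orbits, which is condition (viii).

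The hard part will be the middle paragraph: checking that $(\xi,\eta)\mapsto(M_1,g\xi,g\eta)$ is well defined modulo the finite ambiguity in $g_{\xi,\eta}$ and modulo translation equivalence, that $M_1$ is genuinely normalized so Proposition~\ref{prop: parameters determine surface} applies, and that a translation equivalence of marked normalized surfaces conjugates back to an orientation-preserving element of $\Aff(M)$ matching the marked pairs — the orientation and labeling bookkeeping is the delicate point. The reduction of the half-translation case via the orientation double cover is routine but must be checked to respect the orbit count.
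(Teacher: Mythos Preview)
Your argument for non-parallel pairs is essentially the paper's: bound the number of rectangles via Proposition~\ref{prop: rectangle number bound}, bound the Dehn twist vectors via Proposition~\ref{prop:diagonal count}, and invoke Proposition~\ref{prop: parameters determine surface} to conclude that only finitely many normalized labeled surfaces can arise. The paper's version is terser and, read literally, a bit loose --- it asserts the existence of $g\in G$ with $g\xi_1=\xi_2$ and $g\eta_1=\eta_2$, which of course fails when the wedge products differ --- whereas you make the normalization and the passage to marked triples $(M_1,g\xi,g\eta)$ explicit, and you note that on each $M_1$ there are only finitely many horizontal and vertical saddle connections, which is the missing ingredient that turns finiteness of cylinder-decomposition pairs into finiteness of saddle-connection pairs. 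You also deal separately with the parallel case $|\hol(\xi)\wedge\hol(\eta)|=0$, which the paper's proof does not address; your reduction via a transversal $\zeta$ with $|\hol(\xi)\wedge\hol(\zeta)|\le\mathrm{area}(C)\le 1$ and the sub-claim is clean and correct. So: same approach, but your version is more carefully executed and actually covers all pairs in the stated set.
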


\begin{proof}
Consider two saddle connections $\xi$ and $\eta$ on $M$ with $|\hol(\xi)
\wedge \hol(\eta)| = \beta_0 <T$. By Propositions \ref{prop: Vorobets
parabolic} the directions of $\xi$ and $\eta$ are parabolic directions
on $M$, by
Proposition \ref{prop:diagonal count} the Dehn twist vectors are
bounded, and by Proposition \ref{prop: rectangle number bound} the
number of rectangles in the corresponding pair of cylinder
decompositions is bounded, hence so are the possibilities for the
corresponding permutations. For any two such pairs $(\xi_i, \eta_i), \
i=1,2$, there is $g \in G$ such that $g\xi_1= \xi_2$ and $g\eta_1 =
\eta_2$. If the corresponding Dehn twist vectors and permutations
are the same, then $gM$ is affinely equivalent to $M$ by Proposition
\ref{prop: parameters determine surface}, i.e. $g \in \Gamma_M$. Thus
the numbers of $\Gamma_M$-orbits of such pairs $(\xi, \eta)$ is finite. 
\end{proof}


\section{Counting cylinder intersection patterns}
In this section we will obtain a combinatorial formula which will
bound the number of affine equivalence classes of
translation surfaces in
$\NSVT(\beta)$ and the sums of the co-areas of the Veech groups of
surfaces in $\NSVT(\beta)$. The first bound will imply Theorem
\ref{thm: NST finite}
and the second will imply Theorem \ref{thm: characterization}, since
the case of half-translation surfaces follows via the orientation
double cover. 

Let $X_\ell$ be the set of pairs of
permutations $(\sigma_1,\sigma_2)$ in $S_\ell$ for which the group
$\langle \sigma_1,\sigma_2\rangle$ acts transitively on the set
$\{1,\ldots,\ell\}$. As we have seen such a pair together with the
Dehn twist vectors determine a normalized labeled surface 
decomposed into $\ell$ rectangles. Since the
horizontal and vertical cylinders of the surface correspond to cycles of
$\sigma_1$ and 
$\sigma_2$, the Dehn twist vector $\vec{n}_j$ may be viewed as a function
from the cycles of $\sigma_j$ to the natural numbers. Let $\mathcal
Y(\ell,\beta)$ denote the set of quadruples
$(\sigma_1,\sigma_2,\vec{n}_1,\vec{n}_2)$ where $(\sigma_1,\sigma_2)\in
X_\ell$ and $\vec{n}_j$ are Dehn twist vectors satisfying the bound in
\equ{eq: twist bounds beta}, in particular, compatible with $M$ being
in $\NSVT(\beta)$.

A permutation $\lambda \in S_\ell$ acts on the set of labels $\{1,
\ldots, \ell\}$, and this induces an action on $(\sigma_1, \sigma_2,
\vec{n}_1, \vec{n}_2)$ by simultaneously conjugating
$\sigma_j$ 
and changing the domains of $\vec{n}_j$. If $\lambda$ fixes the 
quadruple $(\sigma_1, \sigma_2,
\vec{n}_1, \vec{n}_2)$ then re-arranging the rectangles of the
corresponding cylinder decompositions gives rise to a translation
equivalence of $M$. Moreover the relabeling operation gives the entire
translation equivalence class, since any translation equivalence of
a surface preserves the horizontal and vertical
directions, thus preserves the cylinder decompositions and takes
rectangles to rectangles. 

This gives:
\begin{prop} 
\name{prop: order bound}
The cardinality of any $S_\ell$-orbit in $\mathcal{Y}(\ell, \beta)$ 
is at least $(\ell -1)!$ 
\end{prop}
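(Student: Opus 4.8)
The statement to prove is Proposition \ref{prop: order bound}: the cardinality of any $S_\ell$-orbit in $\mathcal{Y}(\ell, \beta)$ is at least $(\ell-1)!$.

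By the orbit-stabilizer theorem, the size of the orbit of a quadruple $(\sigma_1, \sigma_2, \vec{n}_1, \vec{n}_2)$ under $S_\ell$ equals $\ell! / |\mathrm{Stab}|$, where $\mathrm{Stab}$ is the stabilizer of the quadruple. So it suffices to show $|\mathrm{Stab}| \leq \ell$, i.e., the number of permutations $\lambda \in S_\ell$ that simultaneously conjugate $\sigma_1 \to \sigma_1$, $\sigma_2 \to \sigma_2$, and preserve the Dehn twist data, is at most $\ell$.

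The key observation: if $\lambda$ commutes with both $\sigma_1$ and $\sigma_2$, then $\lambda$ commutes with the entire group $\langle \sigma_1, \sigma_2 \rangle$. Since this group acts transitively on $\{1, \ldots, \ell\}$, the centralizer of a transitive subgroup of $S_\ell$ acts *freely* on $\{1, \ldots, \ell\}$ — a standard fact. Indeed, if $\lambda$ centralizes a transitive group $H$ and $\lambda(k) = k$ for some $k$, then for any $m$ pick $h \in H$ with $h(k) = m$; then $\lambda(m) = \lambda h(k) = h\lambda(k) = h(k) = m$, so $\lambda = \mathrm{id}$. Hence each nonidentity element of the stabilizer is fixed-point-free, and more is true: the centralizer $C$ of a transitive group acts transitively on each of its own orbits, which all have the same size (a block-like argument), so $|C|$ divides $\ell$... but actually we only need $|C| \leq \ell$. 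Since $C$ acts freely on $\{1,\ldots,\ell\}$, any $C$-orbit has size $|C|$, so $|C| \leq \ell$. The actual stabilizer of the quadruple is a subgroup of $C$ (we also impose preservation of $\vec{n}_1, \vec{n}_2$, which only cuts it down), hence has order at most $\ell$.

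This is exactly the plan. I do not expect a serious obstacle; the only thing to be slightly careful about is the precise meaning of "$\lambda$ fixes the quadruple" — one must note that the stabilizer of the full quadruple is contained in the set of $\lambda$ commuting with $\sigma_1$ and $\sigma_2$ (since conjugating $\sigma_j$ by $\lambda$ and fixing it means $\lambda \sigma_j \lambda^{-1} = \sigma_j$), so it is a subgroup of the centralizer of $\langle \sigma_1, \sigma_2\rangle$, and the freeness argument applies verbatim. Then orbit-stabilizer gives orbit size $= \ell!/|\mathrm{Stab}| \geq \ell!/\ell = (\ell-1)!$.

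\begin{proof}
By the orbit-stabilizer theorem, the cardinality of the $S_\ell$-orbit of a quadruple $(\sigma_1, \sigma_2, \vec{n}_1, \vec{n}_2)$ equals $\ell!/|H|$, where $H \le S_\ell$ is the stabilizer of the quadruple. It therefore suffices to show $|H| \le \ell$.

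If $\lambda \in H$ then in particular $\lambda \sigma_j \lambda^{-1} = \sigma_j$ for $j=1,2$, i.e.\ $\lambda$ lies in the centralizer of the subgroup $\langle \sigma_1, \sigma_2 \rangle \le S_\ell$. We claim that $\lambda$ acts freely on $\{1, \ldots, \ell\}$ unless $\lambda = \mathrm{id}$. Indeed, suppose $\lambda(k) = k$ for some $k$. Given any $m \in \{1, \ldots, \ell\}$, by transitivity of $\langle \sigma_1, \sigma_2 \rangle$ there is $g \in \langle \sigma_1, \sigma_2\rangle$ with $g(k) = m$; since $\lambda$ commutes with $g$,
$$\lambda(m) = \lambda g(k) = g \lambda(k) = g(k) = m.$$
Hence $\lambda = \mathrm{id}$, proving the claim.

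Consequently every nontrivial element of $H$ moves every label, and more precisely the group $H$ acts freely on $\{1, \ldots, \ell\}$: if $\lambda, \lambda' \in H$ agree at some point $k$ then $\lambda^{-1}\lambda'$ fixes $k$, hence is the identity. Therefore the $H$-orbit of any label has cardinality $|H|$, and since this orbit is contained in $\{1, \ldots, \ell\}$ we conclude $|H| \le \ell$. Thus the orbit of the quadruple has cardinality $\ell!/|H| \ge \ell!/\ell = (\ell-1)!$.
\end{proof}
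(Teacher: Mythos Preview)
Your proof is correct. Both your argument and the paper's reduce to showing that the stabilizer $H$ of a quadruple has order at most $\ell$, and both establish this by showing $H$ acts freely on $\{1,\ldots,\ell\}$. The difference is packaging: the paper interprets $H$ geometrically as the group of translation equivalences of the associated normalized labeled surface (the kernel $K_M$ of $D:\Aff(M)\to G$), and argues that a translation equivalence fixing one rectangle must fix all rectangles. You stay entirely in group theory, observing that $H$ lies in the centralizer of the transitive subgroup $\langle\sigma_1,\sigma_2\rangle$ and invoking the standard fact that such a centralizer acts freely. Your version is more self-contained and avoids appealing to the surface correspondence; the paper's version makes the geometric meaning of the stabilizer explicit. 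Underneath, the two arguments are the same: ``fixes one rectangle $\Rightarrow$ fixes adjacent rectangles $\Rightarrow$ fixes all'' is exactly ``$\lambda$ commutes with $\sigma_1,\sigma_2$ and fixes $k$ $\Rightarrow$ fixes $\sigma_j(k)$ $\Rightarrow$ fixes everything by transitivity.''
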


\begin{proof}
For $y = (\sigma_1, \sigma_2, \vec{n}_1, \vec{n}_2) \in
\mathcal{Y}(\ell, \beta),$ let $M$ be the corresponding normalized labeled surface
as in Proposition
\ref{prop: parameters determine surface}. By the above discussion, the
stabilizer in $S_\ell$ of $y$ is isomorphic to the
kernel $K_M$ of the map $D: \Aff(M) \to G$. Since any translation
equivalence permutes the rectangles on $M$, and any equivalence fixing
a rectangle must fix all rectangles, the order of $K_M$ is at most
$\ell$. 
\end{proof} 

\begin{prop}\name{prop: explicit}
Let $\til \NSVT(\beta)$ be the set of affine equivalence classes in
$\NSVT(\beta)$. Then 
$$\left|\til \NSVT(\beta) \right| \le \sum_{\ell
\leq 1/\beta} \frac{1}{(\ell-1)!} \sum_{(\sigma_1,\sigma_2)\in X_\ell} 
\eta(|\sigma_1|,\beta)\eta(|\sigma_2|,\beta),$$
where 
$$\eta(r, \beta)=(8\beta)^{2(1-r)}(1-2\log 
8\beta)^{r-1}$$ 
and 
  $|\sigma|$ is the number of cycles for
$\sigma$.
\end{prop}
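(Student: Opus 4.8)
The plan is to assemble the preparatory results of this and the previous sections into a single count; almost all of the geometric content is already in place, so what remains is bookkeeping. I work with translation surfaces throughout (the half-translation case is absorbed by the orientation double cover, as explained at the start of this section), and I first attach to each affine equivalence class a combinatorial invariant. Given a class in $\til\NSVT(\beta)$, fix once and for all a standard form representative $M_0$ with some number $\ell$ of rectangles, $\ell\le 1/\beta$, as provided by Corollary~\ref{cor: stand form equiv class}. Since $\NSVT(\beta)\subseteq\NST(2\beta)$ by Theorem~\ref{thm: nst vs nsvt}, Proposition~\ref{prop: Vorobets parabolic} shows the horizontal and vertical directions of $M_0$ are parabolic; together with the equal-length condition built into ``standard form'', this makes $M_0$, equipped with any labeling of its $\ell$ rectangles, a normalized labeled surface. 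Reading off its gluing permutations $(\sigma_1,\sigma_2)\in X_\ell$ and its two Dehn twist vectors $\vec n_1,\vec n_2$, and invoking Proposition~\ref{prop: twist bounds beta} for the bound \equ{eq: twist bounds beta}, we obtain a point $y(M_0)\in\mathcal{Y}(\ell,\beta)$; changing the labeling moves $y(M_0)$ within its $S_\ell$-orbit. This defines a map $\Phi$ from $\til\NSVT(\beta)$ into the disjoint union, over $\ell\le 1/\beta$, of the sets of $S_\ell$-orbits in $\mathcal{Y}(\ell,\beta)$.

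The crucial point is that $\Phi$ is injective. Indeed, if two classes are sent to the same $S_\ell$-orbit, then after suitable labelings their chosen representatives yield the very same quadruple $(\sigma_1,\sigma_2,\vec n_1,\vec n_2)$, and Proposition~\ref{prop: parameters determine surface} says this quadruple determines the normalized labeled surface uniquely; hence the two representatives are translation equivalent and the two classes coincide. Therefore $|\til\NSVT(\beta)|\le\sum_{\ell\le 1/\beta}\#\{S_\ell\text{-orbits in }\mathcal{Y}(\ell,\beta)\}$. I would then bound the number of orbits by Proposition~\ref{prop: order bound}: every $S_\ell$-orbit has at least $(\ell-1)!$ points, so there are at most $|\mathcal{Y}(\ell,\beta)|/(\ell-1)!$ of them. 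Finally I would bound $|\mathcal{Y}(\ell,\beta)|$ by fixing $(\sigma_1,\sigma_2)\in X_\ell$ and counting admissible Dehn twist vectors: a vector $\vec n_j$ has $|\sigma_j|$ entries with $\gcd 1$, is determined by the ratios along a spanning tree of the complete graph on those entries, and by \equ{eq: twist bounds beta} each such ratio $p/q$ in lowest terms satisfies $pq\le 1/(64\beta^2)$, of which there are at most $\frac{1}{64\beta^2}\bigl(1+\log\frac{1}{64\beta^2}\bigr)$ — this is exactly the counting step in the proof of Proposition~\ref{prop:diagonal count}, now run with $\beta$ in place of $\alpha$ — so there are at most $\eta(|\sigma_j|,\beta)$ possibilities for $\vec n_j$. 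Hence $|\mathcal{Y}(\ell,\beta)|\le\sum_{(\sigma_1,\sigma_2)\in X_\ell}\eta(|\sigma_1|,\beta)\,\eta(|\sigma_2|,\beta)$, and combining the three displayed inequalities gives the claim.

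Since the earlier propositions carry out all the geometry, the main obstacle here is conceptual rather than computational: arranging the bookkeeping so that affine equivalence classes — not merely translation surfaces or labeled surfaces — are what gets counted. The linchpin is the rigidity of Proposition~\ref{prop: parameters determine surface}: without it, distinct surfaces could carry the same data $(\sigma_1,\sigma_2,\vec n_1,\vec n_2)$ and $\Phi$ would fail to be injective. A secondary point requiring attention is that the Dehn twist count was originally stated for $\NST(\alpha)$ in Proposition~\ref{prop:diagonal count}; one must check that its argument transfers verbatim to the $\NSVT(\beta)$ setting by substituting the inequality \equ{eq: twist bounds beta} for the corresponding $\NST$ bound, which it does since that inequality holds for \emph{all} pairs of cylinders, hence in particular along any spanning tree.
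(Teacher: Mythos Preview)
Your proof is correct and follows essentially the same approach as the paper's: pick a standard-form representative via Corollary~\ref{cor: stand form equiv class}, extract the quadruple in $\mathcal{Y}(\ell,\beta)$, use Proposition~\ref{prop: parameters determine surface} for injectivity at the level of $S_\ell$-orbits, bound orbit sizes by Proposition~\ref{prop: order bound}, and count Dehn twist vectors by the $pq\le 1/(64\beta^2)$ inequality. You are in fact more careful than the paper on two points: you explicitly justify that the horizontal and vertical directions are parabolic, and you correctly observe that the Dehn-twist count must be run with the $\NSVT$ bound \equ{eq: twist bounds beta} (which holds for \emph{all} pairs of cylinders, allowing a spanning tree of the complete graph) rather than literally invoking Proposition~\ref{prop:diagonal count} as stated for $\NST$.
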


\begin{proof} 
Every class in $\til \NSVT(\beta)$ contains a
standard form surface $M$, with a decomposition into $\ell \leq
1/\beta$ rectangles, by Corollary \ref{cor: stand form equiv class}. 
The corresponding combinatorial data gives rise to a quadruple in
$\mathcal{Y}(\ell, \beta)$, and distinct classes in $\til
\NSVT(\beta)$ will give rise to quadruples in distinct
$S_{\ell}$-orbits. 
According
to Proposition \ref{prop:diagonal count}, for each $(\sigma_1,
\sigma_2) \in X_\ell$, 
the number of Dehn twist vectors $\vec{n}_j$ for which $(\sigma_1,
\sigma_2, \vec{n}_1, \vec{n}_2) \in \mathcal{Y}(\ell, \beta)$ is at most 
$\eta(|\sigma_j|, \beta).$
Thus the cardinality of $\mathcal Y(\ell,\beta)$
is bounded by
$$\Phi(\ell, \beta) = \sum_{(\sigma_1,\sigma_2)\in X_\ell}
\eta(|\sigma_1|,\beta)\eta(|\sigma_2|,\beta).$$ 
The size of an $S_\ell$-orbit in $y$ is at least $(\ell - 1)!$ by
Proposition \ref{prop: order bound}. So the number of orbits in
$\mathcal{Y}(\ell, \beta)$ is at most 
$\Phi(\ell, \beta)/(\ell -1)!$ 
The claim follows by summing over $\ell$.
\end{proof}

This explicit estimate, along with a similar bound for
half-translation surfaces, proves Theorem \ref{thm: NST finite}.
We define ${\rm area}\NSVT(\beta)$ to be the sum over the affine
equivalence classes in $\til \NSVT(\beta)$ of the areas of the
Teichm\"uller curves  $\HHH/\Gamma_M$. 

\begin{prop}\name{prop: coarea}
We have 
$${\rm area}\NSVT(\beta)\le 2\pi \sum_{\ell \leq 2/\beta^2}
{\Phi(\ell,\beta)\over (\ell-1)!}.$$

\end{prop}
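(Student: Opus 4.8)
The plan is to combine the area estimate from Corollary~\ref{cor: standard} with the combinatorial count of standard form surfaces, paralleling the proof of Proposition~\ref{prop: explicit} but keeping track of the multiplicity with which each affine equivalence class can produce a labeled surface. By Corollary~\ref{cor: standard}, for each affine equivalence class $M$ in $\til \NSVT(\beta)$ the area of $\HHH/\Gamma_M$ is at most $2\pi E_M$, where $E_M$ is the number of standard form surfaces affinely equivalent to $M$. So $\mathrm{area}\NSVT(\beta) \le 2\pi \sum_M E_M$, and it suffices to bound the total number of standard form surfaces, over all classes, that arise.

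First I would recall from Corollary~\ref{cor: stand form area est} that a standard form surface in $\NSVT(\beta)$ decomposes into at most $2/(\tau\beta^2)$ rectangles, and since $\tau = 2\pi(2g-2+|\Sigma|) \ge 2\pi$ when there is at least one genuine singularity — in any case $\tau/\pi$ is a positive integer, so $\tau \ge \pi$ — we get at most $2/\beta^2$ rectangles. (One should note here that $\ell \le 2/\beta^2$ is the relevant cutoff, which is why the sum in the statement runs over $\ell \le 2/\beta^2$ rather than $\ell \le 1/\beta$ as in Proposition~\ref{prop: explicit}.) Each standard form surface, with a choice of labeling of its $\ell$ rectangles, determines a quadruple $(\sigma_1,\sigma_2,\vec n_1, \vec n_2) \in \mathcal Y(\ell,\beta)$ by the discussion preceding Proposition~\ref{prop: parameters determine surface} and the bound \equ{eq: twist bounds beta}; conversely, by Proposition~\ref{prop: parameters determine surface} each such quadruple determines a unique normalized labeled translation surface, and being normalized with area $1$ and shortest horizontal/vertical saddle connections of equal length bounded by $\sqrt{2/\tau}$ is exactly the standard form condition. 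Thus the map from $\{$labeled standard form surfaces with $\le 2/\beta^2$ rectangles$\}$ to $\bigsqcup_{\ell \le 2/\beta^2}\mathcal Y(\ell,\beta)$ is injective.

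Next I would pass from labeled surfaces to unlabeled ones. A standard form surface with $\ell$ rectangles has exactly $\ell!/|K_M|$ distinct labelings up to nothing — rather, the $\ell!$ labelings fall into $S_\ell$-orbits of size $\ell!/|K_M| \ge (\ell-1)!$ under the relabeling action, by the proof of Proposition~\ref{prop: order bound}, and two labelings give the same quadruple iff they differ by an element of $\mathrm{Stab} = K_M$. Hence each unlabeled standard form surface accounts for at least $(\ell-1)!$ distinct quadruples in $\mathcal Y(\ell,\beta)$, and distinct unlabeled standard form surfaces (even affinely equivalent ones, as long as they are genuinely different points of $\HHH/\Gamma_M$) give disjoint collections of quadruples. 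Therefore the total number of standard form surfaces arising, summed over all affine equivalence classes — i.e. $\sum_M E_M$ — is at most $\sum_{\ell \le 2/\beta^2} |\mathcal Y(\ell,\beta)|/(\ell-1)! \le \sum_{\ell \le 2/\beta^2} \Phi(\ell,\beta)/(\ell-1)!$, using the bound on $|\mathcal Y(\ell,\beta)|$ established in the proof of Proposition~\ref{prop: explicit}. Multiplying by $2\pi$ gives the claim.

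The main obstacle I anticipate is bookkeeping the relationship between the three objects: points of $\HHH/\Gamma_M$ in standard form, labeled standard form surfaces, and quadruples in $\mathcal Y(\ell,\beta)$. One must be careful that a single affine equivalence class may contribute several standard form points (this is precisely the $E_M$ in Corollary~\ref{cor: standard}), that each is hit with multiplicity at least $(\ell-1)!$ at the level of quadruples, and that the $\ell$ may vary among the standard form surfaces within one class — so the sum over $\ell$ must be organized as a sum over all (class, standard form point) pairs rather than over classes. Once this is set up correctly the inequality is a direct consequence of Corollary~\ref{cor: standard}, Corollary~\ref{cor: stand form area est}, Proposition~\ref{prop: order bound}, and the counting already done in Proposition~\ref{prop: explicit}; no genuinely new estimate is needed.
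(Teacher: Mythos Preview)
Your proposal is correct and follows essentially the same approach as the paper's proof: bound the number of rectangles in any standard form surface via Corollary~\ref{cor: stand form area est}, count labeled standard form surfaces by injecting into $\bigsqcup_{\ell}\mathcal Y(\ell,\beta)$ and dividing by $(\ell-1)!$ via Proposition~\ref{prop: order bound}, then invoke Corollary~\ref{cor: standard}. The paper's proof is terse and leaves the bookkeeping you spell out (the passage from labeled to unlabeled surfaces, and the fact that $E_M$ summed over classes is exactly what is being counted) implicit; your version makes these steps explicit and also notes the $\tau\ge\pi$ needed to pass from $2/(\tau\beta^2)$ to $2/\beta^2$, which the paper glosses over.
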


\begin{proof} Every $M \in \NSVT(\beta)$ which is in standard form
contains at most $2/\beta^2$ rectangles by Corollary 
\ref{cor: stand form area est}. Thus $\sum_{\ell \leq 2/\beta^2} \Phi(\ell,\beta) /
(\ell-1)!$ is an upper bound for the number of standard form
surfaces which are affinely isomorphic to some $M \in
\NSVT(\beta)$. Now the bound follows from Corollary \ref{cor:
standard}. 
\end{proof}

Since the sum of the areas is finite each individual area is finite
and we get a proof of Theorem \ref{thm: characterization}. 

\begin{remark}
Our formulae do not distinguish surfaces on different strata, nor do
they distinguish connected surfaces from disconnected ones. 

\end{remark}


\section{Proof of Theorem \ref{thm: TFAE}}

The following characterization of lattice surfaces is part of
Theorem \ref{thm: TFAE}. 

\begin{thm}
\name{prop: closed implies Veech}
Let $\HH$ be the stratum containing the flat surface
$M$. Then $M$ is a lattice surface if and only if the $G$-orbit of $M$
is closed in $\HH$. 
\end{thm}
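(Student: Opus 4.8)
The plan is to prove both implications, using the Teichm\"uller disk picture $\HHH/\Gamma_M$ developed above together with standard facts about the $G$-action on strata. For the easy direction, suppose the $G$-orbit $GM$ is closed in $\HH$. Then $GM$ is a closed $G$-invariant subset of a finite-volume variety and is $G$-equivariantly identified with $G/\Gamma_M$; since $GM$ inherits a finite $G$-invariant measure from the stratum (the push-forward of Haar measure on $G$ supported on the closed orbit has finite total mass because the stratum has finite volume and the orbit is closed), $\Gamma_M$ has finite covolume in $G$, i.e.\ $M$ is a lattice surface. More carefully, one invokes the fact that a closed orbit of a (semisimple, or just unimodular) Lie group acting on a finite-volume space carries an invariant probability measure, which forces the stabilizer to be a lattice. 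This is where I would cite the standard measure-theoretic fact (Raghunathan, or the general principle for locally finite measures on closed orbits).

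For the substantive direction, suppose $M$ is a lattice surface; I must show $GM$ is closed. Equivalently, since $GM \cong G/\Gamma_M$ as a topological space once we know the orbit is locally closed, the point is to rule out degeneration: a sequence $g_n M \to N$ in $\HH$ with $N \notin GM$. The key input is that $\Gamma_M$ a lattice implies $\HHH/\Gamma_M$ has finite hyperbolic area, so it is a finite-area hyperbolic surface (orbifold) with finitely many cusps. First I would show that the function $\lambda$ (infimum of saddle connection lengths), viewed on $\HHH/\Gamma_M$, is proper: it is bounded above by Proposition \ref{prop: go bears}, and it tends to $0$ precisely along the cusps; away from a fixed compact core of $\HHH/\Gamma_M$ we have a uniform lower bound. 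Since a convergent sequence $g_nM\to N$ in $\HH$ has $\lambda(g_nM)$ bounded below by some $\vre>0$ (saddle connections of $N$ persist, so $N$ has no arbitrarily short ones, and short saddle connections on $g_nM$ would persist to $N$), the images of $g_nM$ in $\HHH/\Gamma_M$ lie in the compact part $\{\lambda \ge \vre\}$. Hence, after passing to a subsequence and modifying $g_n$ on the left by $\PSO(2)$ and on the right by $\Gamma_M$, we may take $g_n \to g_\infty$ in $G$, so $g_nM \to g_\infty M \in GM$, giving $N \in GM$.

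The main obstacle is the properness/compactness step: precisely establishing that a convergent sequence in the stratum has a uniform lower bound on $\lambda$, and that the region $\{\lambda \ge \vre\}$ in $\HHH/\Gamma_M$ is compact. The first half is a form of the statement that the length spectrum varies continuously and that limits in a stratum do not create vanishing saddle connections without leaving the stratum — one must use that $N$ lies in the \emph{same} stratum $\HH$ (no collision of singularities), so $N$ itself has a positive shortest saddle connection, and lower semicontinuity of $\lambda$ under the period-coordinate topology gives the bound along the subsequence. The second half is the thick--thin decomposition of the finite-area hyperbolic orbifold $\HHH/\Gamma_M$: each cusp corresponds to a parabolic direction on $M$, along which $\lambda \to 0$, so the thin parts are exactly where $\lambda$ is small, and the complement is compact. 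Once these two facts are in hand the convergence argument closes the orbit. I would also remark that this simultaneously proves equivalence (x) in Theorem \ref{thm: TFAE}, and that the compact-core statement feeds directly into (xi).
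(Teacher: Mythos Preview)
The paper does not actually prove this theorem; it simply cites Veech \cite{Veech - closed} and \cite[\S 5]{toronto}. Your outline for the direction ``lattice $\Longrightarrow$ closed'' is essentially the standard one and can be completed along the lines you indicate. The substantive step --- that $\lambda \to 0$ along each cusp of $\HHH/\Gamma_M$ --- holds because going into a cusp conjugates some parabolic $\gamma \in \Gamma_M$ arbitrarily close to the identity; a parabolic of displacement $s$ in $\Gamma_{gM}$ forces the associated cylinder decomposition of $gM$ to have every inverse modulus $\mu_i = c_i/h_i \le s$, whence $\sum c_i^2 \le s \sum c_i h_i = s$ and so every boundary saddle connection has length at most $\sqrt{s}$. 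With that lemma in hand your properness argument goes through.

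Your argument for ``closed $\Longrightarrow$ lattice,'' however, has a genuine gap, and calling it the easy direction is misleading. The claim that a closed $G$-orbit in a finite-volume space automatically carries a finite $G$-invariant measure is not a general fact: the orbit $GM$ is a lower-dimensional subset of the stratum and has Masur--Veech measure zero, so the ambient finite volume tells you nothing about Haar measure on $G/\Gamma_M$. The statement you have in mind from Raghunathan concerns closed orbits of subgroups in \emph{homogeneous} spaces $H/\Lambda$, and a stratum is not homogeneous for any group containing $G$. The actual proof of this direction requires a dynamical input specific to flat surfaces --- for example, Masur's theorem that for every surface almost every Teichm\"uller geodesic ray is recurrent to a fixed compact subset of the stratum, combined with the elementary fact that for an infinite-covolume Fuchsian group a positive-measure set of geodesic rays (those aimed outside the limit set) diverge in $G/\Gamma$. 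Pulling the compact set back through the homeomorphism $G/\Gamma_M \cong GM$ (Effros) yields the contradiction. This is what the cited references supply.
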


\begin{proof} For two proofs of this result, see the sketch in Veech
\cite{Veech - closed} or the proof in \cite[\S 5]{toronto}. 
\end{proof}

\begin{proof}[Proof of Theorem \ref{thm: TFAE}]
 Vorobets \cite{Vorobets} proved the implications
(i) $\Longleftrightarrow$ (v) $\Longrightarrow$ (iv) 
$\Longrightarrow$ (vi) $\Longleftrightarrow$ (ii)
$\Longleftrightarrow$ (iii). Theorem \ref{thm: characterization} shows
that (i) and (vi) are equivalent, Theorem \ref{thm: nst vs nsvt} shows
that (vi) and (ix) are equivalent, and Theorem \ref{prop: closed implies
Veech} gives the equivalence of (i) and (x). Clearly 
that (viii) $\Longleftrightarrow$ (vii) $\Longleftrightarrow$ (ix),
and 
Corollary \ref{cor: viii} implies 
(ix) $\Longleftarrow$ (viii). 
Putting all these together one sees that 
(i)--(x) are equivalent. To conclude the proof we will prove that 
 (i) $\Longrightarrow$ (xi)
$\Longrightarrow$ (vii). 

Assume (i). 
It is well-known that if $\Gamma \subset G$ is a lattice then there is
a compact $K_1 \subset G/\Gamma$ such that 
any geodesic orbit intersects $K$; that is, denoting by $\pi: G \to
G/\Gamma$ the projection map, for every $g \in G$ one has
$$K_1
\cap \{g_t\pi(g) : t \in \R \}\neq \varnothing.$$
Now taking $\Gamma = \Gamma_M$, and letting $\varphi: G/\Gamma \to
\HH$ be the orbit map $\varphi(\pi(g)) = gM,$ we see that
(xi) holds with $K = \varphi(K_1).$

Now assume (xi). 
Let $\eta>0$ be small enough so that for any $M_0 \in K$, the length
of any saddle connection for $M_0$ is at least $\eta.$ 
Let $v_1, v_2 \in \mathrm{hol}(M)$ such that $v_1\wedge v_2 \neq 0,$
and suppose $v_i = \mathrm{hol}(\delta_i)$ where $\delta_i \in \LL.$ 
For $h \in G$, let $l_i(h)$ be the length of $\delta_i$ with respect
to the Euclidean metric on $hM$. 

Let $g \in G$ be
a linear map such that $gv_1$ is horizontal, $gv_2$ is vertical and
both have the same length, which we denote by $c$. By assumption,
there is $t=t(g) \in \R$ with $g_tgM \in K$.  
If $t\geq 0$ then 
$$\eta \leq l_2(g_tg) = e^{-t/2} l_2(g) = e^{-t/2}c,$$
so $c \geq \eta.$ If $t<0$ we apply the same argument with $l_1$
instead of $l_2$ to see that $c \geq \eta.$ 
Since $g$ preserves the two dimensional volume element, 
$$|v_1 \wedge
v_2| = |gv_1 \wedge gv_2| = c^2 \geq \eta^2,$$
which is a positive constant independent of $v_1, v_2$. 
\end{proof}

\begin{remark}
Let $C$ be the number of cusps in
$\HHH/\Gamma_M$. Then the arguments of \cite{Vorobets} show that the 
number of $\Aff(M)$-orbits of triangles as in alternative (iv) of Theorem
\ref{thm: TFAE} is between $C$ and $C (2g+ \left|\Sigma \right| -2)$. 
\end{remark}

\ignore{

\section{Billiard tables and safe starting points}
In this section we interpret our results for polygonal billiard
tables. Given a polygon $\mathcal{P} \subset \R^2,$ let
$\Gamma=\Gamma_{\mathcal{P}}$ be
the subgroup of $O(2,\R)$ generated by the linear parts of the
reflections in the sides of 
$\mathcal{P}$, and assume $\Gamma$ is finite. The associated
flat surface $M_{\mathcal{P}}$ is constructed as follows: consider the
disjoint union 
$\bigcup_{\gamma \in \Gamma} \gamma \mathcal{P}$, with the edges $e_i
\in \gamma_i \mathcal{P}$ glued to each other if there is an edge $e$
of $\mathcal{P}$ such that $e_i = \gamma_i e$ and $\gamma_1^{-1}
\gamma_2$ is the reflection in $e$. This defines a translation
surface $M_{\mathcal{P}}$ whose singularities are the
vertices of the $\gamma_i \mathcal{P}$. We assume that the 
set of singularities of $M_{\mathcal{P}}$ contains all 
vertices of $\mathcal{P}$ and their $\Gamma$-orbits. Note that
different conventions also appear in the literature, cf. \cite{HS, MT, Vorobets}. 

\begin{example}
Let $\mathcal{P}$ be a triangle with 
angles $\frac{\pi}{2}, \frac{\pi}{5}, \frac{3\pi}{10}$. Then
$\left|\Gamma_{\mathcal{P}_1} \right| =
20$ and $M_{\mathcal{P}}$ can be obtained by gluing opposite edges in
two regular pentagons. The acute vertex of $\mathcal{P}$ gives rise to
two points $p_1, p_2$ on $M_{\mathcal{P}}$ which are the centers of these
pentagons. The total angle around each $p_i$ is $2 \pi$. By our
convention these points belong to $\Sigma_{M_{\mathcal{P}}}$ (as
marked points). Hubert and Schmidt \cite{HS} showed that although 
$M_{\mathcal{P}}$ is not a lattice 
surface, changing $M_{\mathcal{P}}$ by removing $p_1, p_2$ from
$\Sigma_{M_{\mathcal{P}}}$ (and retaining the
underlying flat structure) gives rise to a
lattice surface. 
\end{example}

In order to relate
close approaches and triangles we will need the following:

\begin{prop}
\label{prop: triangles and approaches}
For any vertex connection $\sigma$ on $\mathcal{P}$ which has close approaches, there is a
triangle $\Delta$ in $M=M_{\mathcal{P}}$ satisfying $D(\sigma)
L(\sigma)/2 =\mathrm{area}(\Delta).$
For any triangle $\Delta$ in $M$, with the angle at two of its vertices less than $\pi/4$,
there is a vertex 
connection $\sigma$ on $\mathcal{P}$ which has close approaches, for
which $D(\sigma) L(\sigma)/2
 \leq \mathrm{area}(\Delta).$

\end{prop}

\begin{proof}
Suppose $\sigma$ is a vertex connection, between vertices $p_1, p_2$
on $\mathcal{P}$, which has close approaches. Let $L=L(\sigma)$, let $\phi: [0, L] \to 
\mathcal{P}$ be a parametrization of $\sigma$, and let $t \in (0,
L)$ be such that $\delta(t) =D= D(\sigma).$ Let $\sigma'$ be a path of
length $D$ from $\phi(t)$ to a nearest vertex $p_3$, and let
$\delta$ be the path from $p_1$ to $p_3$ which follows $\sigma$ until
$\phi(t)$ and continues along $\sigma'$. We can lift $\sigma$ and
$\delta$ to segments $\til \sigma$ and $\til \delta$ on $M$, and we have
$$|\hol(\til \sigma) \wedge
\hol(\til \delta)| = DL.$$
Let $\Delta$ be a triangle in the plane with sides $\hol(\til \sigma),
\hol(\til \delta)$, and let $\til p_1, \til p_2, \til p_3$ be the
singularities on the ends of 
$\til \sigma, \til \delta$ covering $p_1, p_2, p_3.$ We map $\hol(\til
\sigma)$ to $\til \sigma$ and by linear continuation in each chart,
isometrically map $\Delta$ to obtain a triangle in $M$, of area
$DL/2$. To see that this map can be continued to all of $\Delta$, note
that the only possible obstruction is a singularity of $M$ as the
image of an interior point of $\Delta$, which would contradict the
definition of $D$. 

Now suppose $\Delta$ is a triangle for $M$ with the angle at the two
vertices $p_1, p_2$ less than $\pi/4$, and let $\sigma$ be the
side of $\Delta$ joining $p_1$ and $p_2$. The condition on angles
ensures that 
the height $h$ of $\Delta$ is achieved by a segment from the vertex $p$ opposite to
$\sigma$ to  $p' \in \sigma$, and $h$ is less than the distance from
$p'$ to either $p_1$ or $p_2$. Thus $\sigma$ is a vertex
connection which has close approaches, and 
$$\mathrm{area}(\Delta) = \frac{hL(\sigma)}{2} \geq \frac{D(\sigma) L(\sigma)}{2}.$$
\end{proof}

\begin{proof}[Proof of Corollary \ref{cor: billiard reformulation}]
In view of Theorem \ref{thm: characterization} it suffices to show
that $\mathcal{P}$ has a linear bound of small
approaches if and only if $M_{\mathcal{P}}$ has no small 
triangles. The `if' direction is immediate from the first
assertion in Proposition \ref{prop: triangles and approaches}. Moreover,
for any $M$, the set of triangles for $M$ with two angles greater than
or equal to a fixed positive number is finite due to the discreteness
of $\hol(M)$ and the fact that the diameter of a triangle is bounded
above by the diameter of $M$. Now the converse follows from the second
statement in Proposition \ref{prop: triangles and approaches}. 
\end{proof}

\begin{question}
Are there irrational polygons having a linear bound on close
approaches?
\end{question}

By definition, a lattice surface $M$ is called {\em arithmetic} if it
has a conjugate commensurable to $\PSL(2,\Z
)$, and a point on a flat surface $M$ is called {\em periodic} if its 
stabilizer under $\Aff(M)$ is of finite index in $\Aff(M).$ Periodic
points were studied by Gutkin, Hubert and Schmidt, who showed:

\begin{thm}[\cite{GHS}]
\label{GHS - main}
Suppose $M$ is a lattice surface. Then the number of periodic points
on $M$ is infinite if and only if $M$ is arithmetic. 
\end{thm}

\begin{proof}[Proof of Corollary \ref{cor: safe start}]
Note that for a polygon $\mathcal{P}$, all the vertices are safe
starting points if and only if $\mathcal{P}$ has a linear bound on
close approaches. In particular, if $\mathcal{P}$ is a lattice polygon then any
of its singularities are safe starting points by 
Corollary \ref{cor:
billiard reformulation}. 
Thus to prove (i) it is enough to show that if $\mathcal{P}$ has a
safe starting point $p$ then $M_{\mathcal{P}}$ has no small
triangles. So let $p$ be a safe starting point, 
\combarak{How to continue from here?}

Now suppose $\mathcal{P}$ is a lattice polygon and $p$ is a safe
starting point. We claim that any pre-image $\til p \in M_{\mathcal{P}}$
of $p$ is a periodic point. Indeed, consider the surface
$M'$ obtained from $M_{\mathcal{P}}$ 
by marking $\til p$, that is $M'$ has the same underlying flat
structure, with $\Sigma_{M'} = \Sigma_M \cup \{\til p\}.$ Arguing as
in the proof of Proposition \ref{prop: triangles and approaches} we
see that $M'$ has no small 
triangles, and hence is a lattice surface. \combarak{Explain why $M'$
can't have a small triangle with all vertices at $\til p$.} This implies that
$\Gamma_{M'}$ is a lattice in $G$, hence of finite index in
$\Gamma_M$. Therefore $\Aff(M')$ is of finite index in $\Aff(M)$. The subgroup
of $\Aff(M')$ fixing $\til p$ is of finite index in $\Aff(M')$, so we
find that $\til p$ is a periodic point for $M'$. Using Theorem
\ref{GHS - main} we obtain (ii) and
(iii). 
\end{proof}

\begin{proof}[Proof of Corollary \ref{cor: closing lemma}]
Let $\sigma, L, \vre, \alpha$ be as in the statement of the
Corollary. By rotating $M$ we can assume that $\sigma$ is horizontal. 
We fix an orientation on $\sigma$ and denote the initial point of
$\sigma$ by $p$ and the terminal point by $q$. Let $\sigma'$ be a
segment of length $\vre$ from $q$ to $p$, and let $\gamma$ be the path
on $M$ obtained by concatenating $\sigma$ and $\sigma'$, so that
$\gamma$ begins and ends at $p$. Let $\delta$ be a path of shortest
length which is homotopic to $\gamma$ rel endpoints. There is a
list  $x_1, \ldots, x_s$ of
singularities (possibly with repetition), such that $\delta$ is a
concatenation of an initial segment from $p$ to $x_1$, saddle
connections from $x_i$ to $x_{i+1}$, and a terminal segment
from $x_s$ to $p$. Any of the 
segments $\lambda$ comprising $\delta$ satisfies 
\eq{eq: segments satisfy}{
|\xi| \leq L+\vre, \ |\eta| \leq \vre \ \ \mathrm{where \ } \hol(\lambda) =
(\xi, \eta).
}
It is possible that $s=0$, in which case
$\delta$ is just a straight segment from $p$ to $p$, and we are done. 
So assume that $s \geq 1$, and repeat the previous construction with 
$\gamma$ replaced with the
concatenation of $\sigma'$ and $\sigma$, which begins and ends at
$q$. We obtain a polygonal path $\delta'$ from $q$ to $q$ passing through
singularities $y_1, \ldots, y_t$, where $t \geq 1$. Any of the
segments $\lambda$ comprising $\delta'$ also satisfies \equ{eq: segments
satisfy}. Moreover there is a saddle connection $\lambda_1$
(respectively $\lambda_2$) joining one of the $x_i$ to one of the
$y_j$ and passing across
$\sigma$ (resp. $\sigma'$). Both $\lambda_i$ satisfy 
\eq{eq: segments2}{
|\xi| \leq 2(L+\vre), \ |\eta| \leq 2\vre \ \ \mathrm{where \ } \hol(\lambda_i) =
(\xi, \eta).
}

We now claim that all the saddle connections above joining
singularities in $\{x_1, \ldots, x_s, y_1, \ldots, y_t\}$ are
parallel. Indeed, let $\lambda, \lambda'$ be two such saddle
connections with $\hol(\lambda) = (\xi, \eta)$ and $\hol(\lambda') =
(\xi', \eta')$. If they are not parallel, then using \equ{eq: segments
satisfy}, \equ{eq: segments2} we find
\[\alpha \leq |\hol(\lambda) \wedge \hol(\lambda') | = |\xi \eta' -
\xi' \eta| \leq |\xi \eta'| + | \xi' \eta| \leq 8 (L+\vre)\vre < \alpha,
\]
a contradiction. 

So the saddle connections are parallel and their concatenation gives
the required loop.
\end{proof}

}

\section{The simplest lattice surfaces}
\name{section:
simplest}
In this section we will list the first few lattice surfaces, ordered
as in \equ{eq: alphaM}. We will also list surfaces with removable
singularities. 

Recall that any triangulation of $M$ has $\tau(M)/\pi$ triangles,
implying
\eq{eq: easy upper bound}{\alpha(M) \leq \frac{\pi}{\tau(M)}. }

\begin{prop}
\name{prop: bottom of spectrum}
If
equality holds in \equ{eq: easy upper bound} then $M$ is an arithmetic
lattice surface, all 
triangles on $M$ have the same area, all cylinders in a cylinder
decomposition of $M$ have the same height and all parallel 
saddle connections on $M$ have the same length.
\end{prop}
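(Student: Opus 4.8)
The plan is to analyze what the equality condition $\alpha(M) = \pi/\tau(M)$ forces, using the triangulation count established earlier: any triangulation of $M$ with vertices in $\Sigma$ has exactly $\tau(M)/\pi$ triangles. First I would observe that since $\alpha(M)$ is the largest lower bound for areas of triangles, every triangle $\Delta$ on $M$ satisfies $\mathrm{area}(\Delta) \ge \pi/\tau(M)$. On the other hand, pick any triangulation of $M$ into $\tau(M)/\pi$ triangles, each of which is a triangle in the sense of the definition; since $M$ has area $1$ and the areas sum to $1$, the average area is $\pi/\tau(M)$, and combined with the lower bound this forces every triangle appearing in this triangulation to have area exactly $\pi/\tau(M)$. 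One then needs the stronger statement that \emph{every} triangle on $M$ has this area: given any triangle $\Delta$, extend it (by cutting along saddle connections) to a triangulation of $M$; all of its triangles then have area exactly $\pi/\tau(M)$, so in particular $\mathrm{area}(\Delta) = \pi/\tau(M)$.

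Next I would derive the cylinder statement. By Theorem \ref{thm: characterization}, since $\alpha(M) > 0$, the surface $M$ is a lattice surface; by Proposition \ref{prop: Vorobets parabolic} every saddle connection direction is parabolic, so $M$ admits cylinder decompositions. Fix such a decomposition into cylinders $C_1, \ldots, C_r$ of heights $h_i$, and fix a cylinder $C_i$. As in the proof of Proposition \ref{prop: s bound}, each boundary component of $C_i$ contains a singularity, and (after a Dehn twist) there is a triangle $\Delta \subset C_i$ with a base $\sigma$ along one boundary component and apex a singularity on the opposite component, of area $\ell(\sigma) h_i / 2$, where $\sigma$ can be taken to be a \emph{shortest} saddle connection in $C_i$'s boundary in that direction. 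Since this triangle has area exactly $\pi/\tau$, and applying the same reasoning to $C_j$ with its shortest boundary saddle connection $\sigma'$, I would want the equality to force $h_i = h_j$. The cleanest route: the $s$-uniform-periodicity bounds from the inequality chain \equ{eq:3} all become equalities, so all saddle connections in a given direction have equal length, and then $\ell(\sigma) h_i = \ell(\sigma')h_j = 2\pi/\tau$ gives $h_i = h_j$ once equal-length is known. So I would prove the equal-length claim first.

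For equal lengths of parallel saddle connections: take two saddle connections $\sigma, \sigma'$ in direction $\theta$. If they bound a common cylinder $C$ of height $h$ and area $A$, then as in \equ{eq:3}, $\ell(\sigma) h \le A$ (circumference bound) while $\ell(\sigma') h \ge 2\alpha = 2\pi/\tau$ (triangle-in-cylinder bound); but also $A \le 1$ and more precisely the triangles inside $C$ using $\sigma, \sigma'$ as bases have area exactly $\pi/\tau$, forcing $\ell(\sigma)h = \ell(\sigma')h = 2\pi/\tau$, hence $\ell(\sigma) = \ell(\sigma')$; iterating along a chain of cylinders as in \equ{eq:product} propagates this to all parallel saddle connections. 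Then $\ell(\sigma) h_i = 2\pi/\tau$ for each cylinder $C_i$ with its (common) boundary saddle-connection length in that direction — but I should be careful here: the common length $\ell$ may differ between the two boundary components of $C_i$ if $C_i$ is glued to itself; however the triangle-area-exactly argument applied to \emph{both} boundary components of $C_i$ gives the same value $2\pi/\tau = \ell h_i = \ell' h_i$, so $\ell = \ell'$ too, and then across neighboring cylinders the shared saddle connection length is common, giving $h_i = h_j$ for neighbors, hence for all $i,j$ by connectedness. Finally, for arithmeticity: equal heights $h_i = h$ and equal circumferences-ratios force all inverse moduli $\mu_i = c_i/h$ to be rational multiples of each other (indeed the Dehn twist vector together with $c_i \in \mathbb{Z}\ell$ after rescaling shows the $\mu_i$ are commensurable with a common value giving an integer-affine structure), so $M$ is built from a single rectangle-tiling with rational data; more directly, the equality case lands $M$ among the surfaces tiled by $\tau/\pi$ equal-area triangles with a cylinder decomposition of equal-height cylinders, which is the classical picture of square-tiled (origami) surfaces, hence $\Gamma_M$ is commensurable to $\PSL(2,\mathbb{Z})$.

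The main obstacle I expect is the arithmeticity conclusion: going from "all triangles have equal area, all cylinders in one decomposition have equal height" to "the Veech group is commensurable to a conjugate of $\PSL(2,\mathbb{Z})$" requires showing the surface is (affinely) square-tiled, which means exhibiting \emph{two transverse} cylinder decompositions whose combined combinatorial data (via Proposition \ref{prop: parameters determine surface}) has all rational moduli ratios and, after rescaling, integral vertex data. The equal-height condition in one direction gives rationality of the $\mu_i$-ratios (the Dehn twist vector), and equal lengths of parallel saddle connections in the transverse direction similarly constrains that decomposition; the remaining work is to check these two rational pictures are compatible, i.e. that the transition data is rational, so that $M$ is a finite cover of the standard torus. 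I would handle this by choosing a horizontal and a vertical cylinder decomposition (both parabolic since $M$ is Veech), normalizing the shortest saddle connection lengths in each to be equal, and showing via the eigenvector computation in Proposition \ref{prop: parameters determine surface} that the equal-height hypothesis forces $E_1 = A_1 B A_2 B^t$ to have a rational Perron eigenvector, whence all rectangle dimensions are rational and $M$ is square-tiled.
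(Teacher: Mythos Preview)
Your treatment of the first three conclusions --- equal triangle areas via the triangulation count, equal lengths of parallel saddle connections, and equal cylinder heights --- is correct and essentially identical to the paper's. The paper phrases the equal-length step slightly more cleanly by fixing a common apex $x$ on the opposite boundary component of a cylinder $C$ and comparing the two triangles with bases $\sigma,\sigma'$ and apex $x$ (same height, same area $\alpha$, hence same base), rather than routing through the inequalities \equ{eq:3}, but this is cosmetic.

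The genuine divergence is in the arithmeticity step. Your proposed route through Proposition~\ref{prop: parameters determine surface} and a rational Perron eigenvector of $E_1=A_1BA_2B^t$ does work: once both transverse cylinder decompositions have constant height vectors $\vec a^{(1)},\vec a^{(2)}$, these are trivially rational eigenvectors and the surface is square-tiled. But this is heavier than necessary, and your write-up wanders before settling on it. The paper's argument is much shorter: take $v=\hol(\sigma)$ horizontal and $v'=\hol(\sigma')$ for a saddle connection $\sigma'$ crossing one horizontal cylinder, and show directly that $\hol(M)\subset \Z v\oplus\Z v'$. Writing $\hol(\lambda)=nv+n'v'$ in the basis $\{v,v'\}$, the coefficient $n'$ is the signed number of horizontal cylinders $\lambda$ crosses (all of height $h$, so integral), and $n$ is the signed crossing number with the cylinder decomposition in direction $\sigma'$ (which, by the same equal-height argument applied to that direction, also has a common height). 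Arithmeticity then follows immediately from the Gutkin--Judge criterion \cite{GJ}. This avoids the Thurston--Veech parametrization entirely; what it buys is a two-line finish once the equal-height and equal-length facts are in hand, whereas your approach re-derives those facts inside a larger algebraic framework.
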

\begin{proof}
Since 
$\alpha(M)$ is both a lower bound and the 
average of the triangle areas in any triangulation, the area of
each triangle for $M$ must be equal 
to $\alpha(M).$ We will now show that for any $\theta$, the length
$|\sigma|$ of any
$\sigma \in \LL_{M}(\theta)$ is the same. By Theorem \ref{thm: TFAE}, $\theta$
is a parabolic direction 
for $M$, so $\sigma$ is contained in the boundary 
of a cylinder $C$, and there is a singularity $x$ in the
boundary component of $C$ opposite to $\sigma$. If $\sigma' \in
\LL_M(\theta)$ is in the same boundary component of $C$ as $\sigma$,
consider the triangles $\Delta$ and $\Delta'$ with apex $x$ and base $\sigma$ and
$\sigma'$ respectively. Since these triangles have the same height and
area we find that $|\sigma| = |\sigma'|$. Similarly, if $\sigma''$ is a segment on the boundary
component of $C$ opposite to $\sigma$ then a triangle $\Delta''$ with base
$\sigma''$ and apex an endpoint of $\sigma$ has the same area and
height as $\Delta$ so we find that $|\sigma| = |\sigma''|.$ 

Now consider another cylinder $C'$ whose boundary contains
$\sigma$. Since it contains a triangle with base $\sigma$, its height
is the same as that of $C$, and any saddle connection on either of its
boundary components has length $|\sigma|$. Continuing in this fashion and using the
connectedness of $M$ we find that all segments in $\LL_M(\theta)$ have
the same length and all cylinders in the corresponding
cylinder decomposition have the same height.

Now consider $C, \sigma$ as before and let $\sigma'$ be a saddle
connection passing from one 
boundary component of $C$ to another. Write $v = \hol(\sigma), v' =
\hol(\sigma')$, and consider any saddle connection $\lambda$ on
$M$. We can write $\hol(\lambda) = nv + n'v'$, where $n$ (respectively
$n'$) is the number
of times $\lambda$ passes through a cylinder in the direction of
$\sigma'$ (resp. $\sigma$). In particular $\hol(M) \subset \Z v \oplus
\Z v'.$ It follows by \cite[\S 5]{GJ} 
that $M$ is arithmetic. 
\end{proof}

We now list some examples of arithmetic lattice surfaces $M$ and calculate
$\alpha(M)$. Let $\gamma = \tau/\pi$ denote the number of triangles in
a triangulation of $M$, so that by \equ{eq: Gauss Bonnet} we have 
\eq{eq: GB}{
\gamma= 2 \left(\sum r_{\sigma} + \left|\Sigma \right| \right) =
2\left(2g-2+\left|\Sigma \right| \right).
}

\begin{enumerate}
\item
Let $M_1$ be the standard flat torus $[0,1]^2$ with opposite sides
identified, with one marked point at the origin. By \equ{eq: GB} $
\gamma  =2$
so $\alpha(M_1) \leq 1/2.$ On the other hand $\hol(M_1) \subset \Z^2$
so that $|v_1 \wedge v_2| \geq 1$ for any two linearly independent
$v_1, v_2 \in \hol(M_1)$. Since the area of a triangle with sides
$v_1, v_2$ is $|v_1 \wedge v_2|/2$ we see that $\alpha(M_1)=1/2.$
\item
Let $M_2$ be the standard pillowcase with 4 singularities of total
angle $\pi$. Then $\gamma = 4$ so that 
$\alpha(M) \leq 1/4.$ On the other hand $\hol(M_3) \subset
\left(\frac1{\sqrt{2}} \Z \right)^2,$ so that $\alpha(M_3) =1/4.$ 
\item
Let $M_3$ be the standard torus with one marked
point at the origin
and another at $\left(\frac12,0\right)$. Then
$\gamma
= 4$ so that $\alpha(M_2)\leq 1/4$. On the other hand $\hol(M_2)
\subset \Z\left[\frac12\right] \oplus \Z$ so that $\alpha(M_2) = 1/4.$

\ignore{
\item
Let $M_4$ be the standard torus with one marked point at the origin,
one marked point at $\left(\frac13, 0\right)$ and a third marked point at
$\left(\frac23, 0\right)$. Then $\gamma =3$ so that $\alpha(M) \leq
1/6$ and $\hol(M_4) \subset \Z\left[\frac13 \right] \oplus \Z$ so that
$\alpha(M_4) = 1/6.$ 
\item
Let $M_5$ be the half-translation surface obtained by gluing 3 squares
to make a flat torus attached to a pillowcase along a slit (see Figure 2). Then there are two
singularities with total angle $\pi$ and one with total angle $4 \pi$
so that \equ{eq: GB} gives $\gamma =3.$ Also $\hol(M_5) \subset
\Z\left[\frac1{\sqrt{3}} \right] \oplus \Z\left[\frac1{\sqrt{3}}
\right]$ so that 
$\alpha(M_5) = 1/6.$

\item
Let $M_6$ be the translation surface obtained by gluing opposite sides
of an $L$-shaped
polygon made of three equal squares of sidelength
$\frac1{\sqrt{3}}$ (see Figure 2). Then there is
one singularity of cone angle $6 \pi$ and \equ{eq: GB} gives $\gamma
=3,$ so that $\alpha(M_6) \leq 1/6.$ Also $\hol(M_6) \subset
\Z\left[\frac1{\sqrt{3}} \right] \oplus \Z\left[\frac1{\sqrt{3}}
\right] $ so that 
$\alpha(M_5) = 1/6.$
}
\end{enumerate}

We now show:
\begin{prop}
$\NST\left(\frac14 \right)$ consists of the affine equivalence classes
of $M_1, M_2, M_3$. 

\end{prop}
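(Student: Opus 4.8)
The plan is to combine the elementary area bound \equ{eq: easy upper bound} with Gauss--Bonnet to restrict the topology to a short list of cases, and then identify each surviving case by hand, using Proposition \ref{prop: bottom of spectrum} in the one case where that bound is tight. If $M\in\NST(\tfrac14)$ then $\alpha(M)\ge\tfrac14$, so by \equ{eq: easy upper bound} the number $\gamma=\tau(M)/\pi$ of triangles in a triangulation satisfies $\gamma\le 4$; by \equ{eq: GB} it equals $2(2g-2+|\Sigma|)$, a positive even integer, so $\gamma\in\{2,4\}$. Since $\sum_\sigma r_\sigma=2g-2$ and each $r_\sigma\ge-\tfrac12$, a short check shows that $\gamma=2$ forces $g=1,\ |\Sigma|=1$ (a torus with a single, necessarily removable, marked point), while $\gamma=4$ forces one of: (a) $g=0,\ |\Sigma|=4$ with every $r_\sigma=-\tfrac12$ (pillowcase type); (b) $g=1,\ |\Sigma|=2$ with $r_{\sigma_1}=r_{\sigma_2}=0$ (a torus with two marked points); or (c) $g=1,\ |\Sigma|=2$ with $\{r_{\sigma_1},r_{\sigma_2}\}=\{-\tfrac12,\tfrac12\}$. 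Case (c) is empty: viewed as a quadratic differential on a genus-one Riemann surface it would be a meromorphic function times $dz^2$ with a single simple pole, contradicting the residue theorem.

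I would then dispose of cases $\gamma=2$ and $\gamma=4$(a). Any unit-area flat torus with one marked point becomes $M_1$ after translating the mark to the origin and applying a linear normalization, so these surfaces constitute exactly the class $[M_1]$, and $\alpha(M_1)=\tfrac12$. A pillowcase-type surface is $\R^2/\Lambda$ modulo $x\mapsto-x$, with the four cone points at the images of $\tfrac12\Lambda/\Lambda$; a linear map carrying one such lattice to another conjugates the involution and preserves half-lattice points, hence descends to an affine isomorphism, so these surfaces constitute exactly the class $[M_2]$, and $\alpha(M_2)=\tfrac14$. In both cases the entire family lies in $\NST(\tfrac14)$.

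The crux is case (b). Here $\gamma=4$ gives $\alpha(M)\le\pi/\tau(M)=\tfrac14$, so $\alpha(M)=\tfrac14$ and Proposition \ref{prop: bottom of spectrum} applies: $M$ is a lattice surface, and any two saddle connections in a common direction have equal length. Writing $M=\R^2/\Lambda$ with marked points $0$ and $w$, the straight segment from $0$ to $w$ is a saddle connection, so by the Veech dichotomy its direction is periodic, hence---on a torus, where periodic directions are exactly the rational ones---rational. Normalizing by an element of $\SL(2,\R)$ and then of $\SL(2,\Z)$, we may take $\Lambda=\Z^2$ and $w=(t,0)$ with $t\in(0,1)$. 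The saddle connections in the horizontal direction are then precisely the two arcs of the circle $y=0$ cut off by the two marks, of lengths $t$ and $1-t$; equality of lengths forces $t=\tfrac12$, so $M$ is affinely equivalent to $M_3$. Conversely $\alpha(M_3)=\tfrac14$, since $\hol(M_3)\subset\tfrac12\Z\oplus\Z$ puts every triangle area in $\tfrac14\Z$ while the value $\tfrac14$ is attained; hence $\NST(\tfrac14)=\{[M_1],[M_2],[M_3]\}$. I expect the only real difficulty to be in case (b): the equal-length conclusion of Proposition \ref{prop: bottom of spectrum} carries no information for a ``generic'' torus with two marked points, so one genuinely needs the lattice property (via the Veech dichotomy) to rule out an irrational offset $w$; ruling out case (c) is a secondary point requiring care.
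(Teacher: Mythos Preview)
Your proof is correct and follows essentially the same route as the paper: bound $\gamma$ via \equ{eq: easy upper bound}, enumerate the topological types by Gauss--Bonnet, and in the one tight case (torus with two marked points) invoke Proposition \ref{prop: bottom of spectrum}. The only cosmetic differences are that you eliminate case (c) by a direct residue-theorem argument where the paper cites \cite{MS2}, and in case (b) you first use the lattice property to rotate $w$ to the horizontal before applying the equal-length conclusion, whereas the paper applies the equal-length conclusion to the continuation of $\sigma$ past $w$ to get $2w\in\Z^2$ directly.
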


\begin{proof}
The above discussion shows $M_1, M_2, M_3 \in \NST\left(\frac14
\right)$ and it remains to show that if $M$ is a flat surface with
$\alpha = \alpha(M) \geq 1/4$ then $M$ is affinely equivalent to one of the
$M_i$. Let $\gamma$ be as above, so by \equ{eq: easy upper bound} we
have $\gamma \in \{2,4\}.$ If $\gamma =2$ then either $g=1$ and $\left|
\Sigma\right| =1$ or $g=0$ and $\left|\Sigma\right| =3.$ In the first case, since
the moduli space of tori with one marked point is a 
single $G$-orbit, we find that $M$ is affinely equivalent to
$M_1$. The second case does not occur as there is no solution to
\equ{eq: Gauss Bonnet} with three singularities. 

Now suppose $\gamma =4.$ The only solutions to \equ{eq: easy upper
bound} are 
$\left(g=0, \left|\Sigma\right| =4\right)$ and $\left(g=1, \left|
\Sigma \right|=2\right)$. In the
first case it follows from \equ{eq: Gauss Bonnet} that the four
singularities have $r_{\sigma} = -1/2$ so $M$ is a pillowcase. Since
the moduli space of the pillowcase consists of a single $G$-orbit, we
have that $M$ is affinely equivalent to $M_2$. In the second case by
\equ{eq: GB} the two singularities $\sigma_1, \sigma_2$ satisfy either
\begin{itemize}
\item[(i)]
$r_{\sigma_1} = -1/2, r_{\sigma_2}
= 1/2.$
\item[(ii)]
$r_{\sigma_1} = r_{\sigma_2}=0$;
\end{itemize}
In case (i) we obtain a half-translation structure on a torus, but
such a flat surface does not exist (see \cite{MS2}).  
In case (ii) $M$ is a torus with two marked points. Applying an element
of $G$ we may identify $M$ with the unit square, and there is no
loss of generality in assuming that one of the marked points is at the
origin. By Proposition \ref{prop: bottom of spectrum}, if $\sigma$ is
the saddle connection connecting the two marked points inside the unit
square, then there is a parallel segment from the second marked point
to a singularity, of the same length. This implies that the second
marked point is at either of the points $\left(\frac12, 0 \right),
\left(0, \frac12, \right), \left(\frac12, \frac12\right).$ All of
these cases are affinely equivalent to $M_3$. 
\end{proof}

If $\alpha$ is not too small one can continue applying such arguments
to identify $\NST\left(\alpha 
\right)$. For example, in addition to $M_1, M_2,
M_3$, $\NST\left(\frac16\right)$ consists of a torus with two marked
point, a torus with three marked points, a genus 1 half-translation
surface made by gluing a torus and a pillowcase along a slit, and a
genus 2 surface (see Figure 1). All these examples are arithmetic.

\begin{figure}[htp] 
\input{m5m6.pstex_t}
\caption{Two surfaces in $\NST\left( \frac16 \right)$}
\end{figure}

\ignore{
Now suppose $\gamma =3$. The possible solutions to \equ{eq: easy upper
bound} are:
\begin{itemize}
\item[(a)]
$g=0$, $\left|\Sigma \right| = 5$. 
\item[(b)]
$g=1$, $\left|\Sigma\right| = 3.$
\item[(c)]
$g=2$, $\left|\Sigma \right| =1.$

\end{itemize}
First suppose (a) holds. The only solution to \equ{eq: Gauss Bonnet}
is that there are four singularities with $r_{\sigma} = -1/2$ and one
marked point. Thus $M$ is a pillowcase, and by applying the $G$-action
we may assume it is a standard pillowcase with two squares of
side-length $1/\sqrt{2}$ glued to each other. The marked point is on
one of these squares. Arguing as in case (ii) above we find that the
marked point is on the midpoint of a segment connecting two of the
corners of the squares. A direct computation then shows that $M$
contains a triangle of area $1/8$ contradicting $\alpha \geq 1/6.$ 

Now suppose (b) holds. Then one of the following holds:
\begin{itemize}
\item[(b1)]
$M$ is a torus with singularities of orders $1/2, 0, -1/2.$
\item[(b2)]
$M$ is a torus with 3 marked points. 
\item[(b3)]
$M$ is a torus with singularities of orders $1, -1/2, -1/2$.
\end{itemize}

Case (b1) gives a half-translation structure which cannot occur by
\cite{MS2}. In case (b2) there is no loss of generality in assuming
that $M$ is identified with the standard square with one marked point
at the origin. Then one uses Proposition \ref{prop: bottom of
spectrum} to analyze the possible locations of the marked points. One
finds that up to affine equivalence the only possibility is
$M_4$. Similar arguments show that in case (b3) the only possibility
is $M_5$.

Finally in case $c$ the only possibility is $M_6$.  
\end{proof}
}

\end{document}